\numberwithin{equation}{section}
\crefname{claim}{Claim}{Claims}
\crefname{lem}{Lemma}{Lemmas}
\crefname{thm}{Theorem}{Theorems}
\crefname{prop}{Proposition}{Propositions}
\crefname{question}{Question}{Questions}
\crefname{defn}{Definition}{Definitions}
\crefname{conj}{Conjecture}{Conjectures}
\crefname{figure}{Figure}{Figures}
\crefname{cor}{Corollary}{Corollaries} 
\renewcommand\emph[1]{\textcolor{frenchblue}{\it #1}}
\title[Hall--Littlewood expansions of chromatic quasisymmetric polynomials]
{Hall--Littlewood expansions of chromatic quasisymmetric polynomials using
linked rook placements}
\author{Jang Soo Kim}
\address{Department of Mathematics, Sungkyunkwan University, Suwon, South Korea}
\email{jangsookim@skku.edu}
\author{Seung Jin Lee}
\address{Department of Mathematical Sciences, Research Institute of Mathematics, 
Seoul National University, Seoul 151-747,
South Korea}
\email{lsjin@snu.ac.kr}
\author{Meesue Yoo}
\address{Department of Mathematics, Chungbuk National University, Cheongju 28644,
South Korea}
\email{meesueyoo@chungbuk.ac.kr (\Letter)}
\keywords{unicellular LLT polynomials, chromatic quasisymmetric functions, Hall--Littlewood polynomials, rook placements}
\subjclass[2010]{Primary: 05A15; Secondary: 05A30}
\date{\today}
\newtheorem{thm}{Theorem}[section]
\newtheorem{lem}[thm]{Lemma}
\newtheorem{prop}[thm]{Proposition}
\theoremstyle{definition}
\newtheorem{exam}[thm]{Example}
\newtheorem{defn}[thm]{Definition}
\newtheorem{conj}[thm]{Conjecture}
\newtheorem{remark}[thm]{Remark}
\newcommand\set{\operatorname{set}}
\newcommand\area{\operatorname{area}}
\newcommand\fc{\operatorname{fc}}
\newcommand\ext{\operatorname{ext}}
\newcommand\LRP{\operatorname{LRP}}
\newcommand\ZZ{\mathbb{Z}}
\newcommand\Qbinom[3]{\genfrac{[}{]}{0pt}{}{#1}{#2}_{#3}}
\newcommand\qbinom[2]{\Qbinom{#1}{#2}{q}}
\newcommand\MM{\mathcal{M}}
\newcommand\D{\mathcal{D}}
\newcommand\SSYT{\operatorname{SSYT}}
\newcommand\Imm{\operatorname{Imm}}
\newcommand\vx{\boldsymbol{x}}
\newcommand\vy{\boldsymbol{y}}
\newcommand\vm{\boldsymbol{m}}
\newcommand\inv{\operatorname{inv}}
\newcommand\LLT{\operatorname{LLT}}
\definecolor{pink1}{RGB}{219, 48, 122}
\definecolor{dredcolor}{rgb}{0.9,0.3,0.4}
\definecolor{etonblue}{rgb}{0.59, 0.78, 0.64}
\definecolor{canaryyellow}{rgb}{1.0, 0.94, 0.0}
\definecolor{frenchblue}{rgb}{0.0, 0.45, 0.73}
\definecolor{oxfordblue}{rgb}{0.0, 0.13, 0.28}
\definecolor{cerulean}{rgb}{0.0, 0.48, 0.65}
\definecolor{asparagus}{rgb}{0.53, 0.66, 0.42}
\begin{document}

\begin{abstract}
  In this work, we obtain a Hall--Littlewood expansion of the
  chromatic quasisymmetric function arising from a natural unit
  interval order and describe the coefficients in terms of linked rook
  placements. Applying the Carlsson--Mellit relation between chromatic
  quasisymmetric functions and unicellular LLT polynomials, we also
  obtain a combinatorial description for the coefficients of the
  unicellular LLT polynomials expanded in terms of the modified
  transformed Hall--Littlewood polynomials.
\end{abstract}

\maketitle

\section{Introduction}

The main objective of this paper is to provide a combinatorial formula
for the Hall--Littlewood expansion of the chromatic quasisymmetric
function in terms of linked rook placements. We first present the
background and motivation for this work.

The \emph{immanant} of an \( n \times n \) matrix \( A=(a_{i,j}) \)
with respect to a partition \(\lambda \) of \( n \)
is defined by
\[
  \Imm_\lambda (A)=\sum_{w\in S_n}\chi ^\lambda(w)\prod_{i=1}^n a_{i,
    w(i)},
\]
where \( \chi^\lambda \) denotes the irreducible character of the
symmetric group \( S_n \) associated with \( \lambda \). If
\( \lambda=(1^n) \), the immanant becomes the determinant. The
Jacobi--Trudi formula expresses the (skew) Schur function
\( s_{\mu/\nu} \) as
\[
  s_{\mu/\nu} = \det H_{\mu/\nu},
\]
where
\( H_{\mu/\nu} = \left( h_{\mu_i -\nu_j-i+j} \right)_{1\le i, j\le n}
\) is the Jacobi--Trudi matrix of complete homogeneous symmetric
functions \( h_\lambda \). This implies that
\( \Imm_\lambda H_{\mu/\nu} \) is \( s \)-positive when
\( \lambda=(1^n) \). Here, for a symmetric function \( f \) and a
basis \( \{g_\lambda\} \) of the space of symmetric functions, we say
that \( f \) is \emph{\( g \)-positive} if every coefficient
\( c_\lambda \) in the expansion
\( f= \sum_{\lambda}c_\lambda g_\lambda \) is nonnegative. Goulden and
Jackson \cite{Goulden1992a} conjectured that
\( \Imm_\lambda H_{\mu/\nu} \) is \( m \)-positive for any
\( \lambda \), which was proved by Greene \cite{Greene1992},
where \( m_\lambda \) is the monomial symmetric function. More
generally, Haiman \cite{Haiman1993a} showed that
\( \Imm_\lambda H_{\mu/\nu} \) is \( s \)-positive for any partition
\( \lambda \).

In \cite{Ste92}, Stembridge presented several conjectures concerning
immanants. Stanley and Stembridge \cite{Stanley1993a} rephrased one of
them as follows. For partitions \( \mu \) and \( \nu \) with at most
\( n \) parts, let
\[
  F_{\mu/\nu}(\bm x, \bm y) =\sum_{\lambda\vdash n} s_\lambda (\bm y) \Imm_\lambda H_{\mu/\nu}(\bm x),
\]
where the sum is over all partitions \( \lambda \) of \( n \), and
\( \bm x=(x_1,x_2,\dots) \) and \( \bm y=(y_1, y_2,\dots) \) are
independent variables. Let
\( E_{\mu/\nu}^\theta = E_{\mu/\nu}^\theta(\vy) \) be the coefficient
of \( s_\theta(\vx) \) in \( F_{\mu/\nu}(\bm x, \bm y) \), that is,
\[
  F_{\mu/\nu}(\bm x, \bm y) = \sum_{\theta\vdash N}E_{\mu/\nu}^\theta (\bm y)s_{\theta}(\bm x),
\]
where \( N=|\mu|-|\nu| \). The following is a restatement 
in \cite[Conjecture~1.1]{Stanley1993a}
of Stembridge's conjecture \cite{Ste92}.
\begin{conj}\label{con:1}
The symmetric function \( E_{\mu/\nu}^\theta \) is \( h \)-positive.  
\end{conj}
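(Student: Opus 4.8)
The plan is to first expose the $\vy$-dependence of $F_{\mu/\nu}$, then to translate the desired $h$-positivity into a statement about chromatic quasisymmetric functions, and finally to attack it through the Hall--Littlewood expansion developed in this paper. Expanding each immanant by its definition and interchanging the order of summation, one may apply the Frobenius identity $\sum_{\lambda\vdash n}\chi^\lambda(w)\,s_\lambda(\vy)=p_{\rho(w)}(\vy)$, where $\rho(w)$ denotes the cycle type of $w$, to obtain
\[
  F_{\mu/\nu}(\vx,\vy)=\sum_{w\in S_n} p_{\rho(w)}(\vy)\prod_{i=1}^{n} h_{\mu_i-\nu_{w(i)}-i+w(i)}(\vx),
\]
with the usual conventions $h_0=1$ and $h_k=0$ for $k<0$. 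Extracting the coefficient of $s_\theta(\vx)$ realizes $E_{\mu/\nu}^\theta(\vy)$ as an explicit integral combination of the power sums $p_{\rho(w)}(\vy)$, and the content of \cref{con:1} is that this combination is in fact a nonnegative combination of the $h_\lambda(\vy)$. It is worth recording what is already known: Greene's theorem \cite{Greene1992} gives $m$-positivity and Haiman's theorem \cite{Haiman1993a} gives $s$-positivity of each $\Imm_\lambda H_{\mu/\nu}$, so the $\vx$-side is well controlled, and the outstanding difficulty is the strictly stronger $h$-positivity in the $\vy$-variables.

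Next I would pass to chromatic symmetric functions. By the equivalence established by Stanley and Stembridge \cite{Stanley1993a}, \cref{con:1} is known to be equivalent to the $e$-positivity of the chromatic symmetric function $X_G$ of the incomparability graph $G$ of a $(\mathbf{3}+\mathbf{1})$-free poset, and by Guay-Paquet's reduction it suffices to treat unit interval orders---precisely the setting in which this paper produces a Hall--Littlewood expansion. The plan is then to expand the associated chromatic quasisymmetric function $\omega X_G(\vx;q)$ in the modified Hall--Littlewood basis, reading off the coefficients through linked rook placements, and to specialize $q=1$. Since the modified Hall--Littlewood polynomial degenerates to $h_\lambda(\vx)$ at $q=1$, every coefficient that is a nonnegative polynomial in $q$ collapses there to a nonnegative number; summing, one would obtain the $h$-positivity of $\omega X_G$, hence the $e$-positivity of $X_G$, and thereby \cref{con:1}.

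The step I expect to be the genuine obstacle is exactly the nonnegativity of the linked rook placement coefficients. The expansion produces each coefficient as a weighted sum over linked rook placements, but this sum carries signs, so the coefficients are not manifestly nonnegative polynomials in $q$; were they manifestly nonnegative, the argument above would already settle a long-standing conjecture. The crucial and, I expect, genuinely hard step is therefore to construct a sign-reversing involution on the set of linked rook placements that cancels all negative contributions and leaves a manifestly positive remainder, or, equivalently, to exhibit a second cancellation-free model for the same coefficients. The positivity results quoted above do not resolve the difficulty: both $m$-positivity and $s$-positivity admit cancellation-free derivations, whereas no positive model adapted to the finer $h$-expansion is presently known, which is precisely why \cref{con:1} remains open.
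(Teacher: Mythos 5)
You are attempting to prove a statement that the paper itself does not prove: \Cref{con:1} is Stembridge's conjecture (as restated by Stanley and Stembridge), and it appears in the paper only as motivation. You correctly stop short of claiming a proof, and your opening reduction via the Frobenius identity, giving $F_{\mu/\nu}(\vx,\vy)=\sum_{w\in S_n}p_{\rho(w)}(\vy)\prod_i h_{\mu_i-\nu_{w(i)}-i+w(i)}(\vx)$, is correct but does not advance the positivity question. The substantive problems are elsewhere. First, the equivalence you invoke is overstated: by the paper's own account, only the \emph{special case} $\theta=(N)$ of \Cref{con:1} is equivalent (via \eqref{eq:30} and \eqref{eqn:Xm}, Stanley's reformulation, and Guay-Paquet's reduction) to the $e$-positivity statement \Cref{con:2}; no reduction of the general-$\theta$ conjecture to chromatic symmetric functions is claimed or used anywhere. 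So even a fully successful execution of your plan would prove only \Cref{con:2}, not \Cref{con:1}. Second, \Cref{con:2} is no longer open: as the paper notes twice, it was proved by Hikita \cite{Hikita} and by Griffin et al.~\cite{Griffin2025}. Your closing assertion that the failure of this route is ``precisely why \Cref{con:1} remains open'' is therefore off target in both directions: the case your route addresses is already settled by other means, and the case that may remain open is not addressed by your route at all.

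Your description of the technical obstruction also mischaracterizes the paper's results. The linked-rook coefficients $r_{\gamma,\mu}(q)=\sum_{P\in\LRP(\gamma,\mu)}q^{\fc_\gamma(P)}$ of \Cref{def:1} are \emph{manifestly} polynomials with nonnegative coefficients; there are no signs to cancel and no sign-reversing involution is called for. The genuine issue is the choice of basis. The paper's positive expansion (\Cref{thm:HLexp}) is in the Hall--Littlewood basis $P_\mu(\vx;q)$, and since $P_\mu(\vx;1)=m_\mu(\vx)$, setting $q=1$ recovers only the known monomial expansion \eqref{eqn:Xm} (Greene's $m$-positivity), not $h$- or $e$-positivity. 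The expansions in which $(1\pm q)$-factors --- and hence signs --- do appear are those of \Cref{thm:HLexp_LLT}, but these expand $\LLT_\gamma(\vx;q)$, not $\omega X_\gamma(\vx;q)$, in the modified Hall--Littlewood basis; the paper supplies no expansion of $X_\gamma$ itself in the basis $H_\mu(\vx;q)$, positive or otherwise. Your underlying strategy (a positive expansion in a basis degenerating to $h_\mu$ at $q=1$ would yield $e$-positivity) is sound in principle, but the missing ingredient is that entire expansion, not a cancellation argument on top of the paper's; this is exactly the direction the authors defer to Griffin et al.\ in Remark~\ref{rem:1}.
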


This conjecture implies Haiman's result that
\( \Imm_\lambda H_{\mu/\nu} \) is \( s \)-positive. The case
\( \theta=(N) \) is particularly interesting due to its connection
with chromatic symmetric functions introduced by Stanley
\cite{Stan95}. For a graph \( G \) with a vertex set
\( [n]=\{1,2,\dots,n\} \), the \emph{chromatic symmetric function}
\( X_G(\vx) \) is defined by
\[
  X_G(\vx) = \sum_{\kappa} x_{\kappa(1)} \cdots x_{\kappa(n)},
\]
where the sum is over all proper colorings
\( \kappa:[n]\to \ZZ_{\ge1} \) of \( G \). In
\cite[Conjecture~5.1]{Stan95}, Stanley reformulated \Cref{con:1} for
\( \theta=(N) \) in terms of chromatic symmetric functions \( X_G(\vx) \)
and elementary symmetric functions \( e_\lambda \) as follows.

\begin{conj}\label{con:2}
  If \( G \) is the incompatibility graph of a natural unit interval
  order, then \( X_G(\vx) \) is \( e \)-positive.
\end{conj}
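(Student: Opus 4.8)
The plan is to prove $e$-positivity not for $X_G(\vx)$ directly but for its graded refinement, the chromatic quasisymmetric function $X_G(\vx;q)$ of Shareshian and Wachs, which specializes to $X_G(\vx)$ at $q=1$. Carrying the extra variable $q$ is what makes the Hall--Littlewood machinery and the linked rook placement statistics available, so the first move is to expand $X_G(\vx;q)$ in the Hall--Littlewood basis and read off each coefficient as a generating function over $\LRP$. Once this expansion is in hand, \Cref{con:2} reduces to a purely combinatorial statement: after re-expanding the Hall--Littlewood polynomials in the elementary basis $\{e_\lambda\}$ and collecting terms, every resulting coefficient must be shown to be a nonnegative polynomial in $q$ (and hence nonnegative at $q=1$).

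The steps, in order, would be: (1) establish the Hall--Littlewood expansion with $\LRP$-coefficients, which is the technical core supplied in this paper; (2) compose it with the transition that writes each Hall--Littlewood polynomial in terms of the $e_\lambda$, so that the coefficient of a fixed $e_\lambda$ becomes a signed sum over linked rook placements; and (3) prove that each such signed sum is nonnegative by constructing a sign-reversing involution on the relevant set of linked rook placements whose fixed points all carry positive weight. The involution must be built from the endpoint data of the natural unit interval order, pairing a placement of one sign with a placement of the opposite sign related to it by a local exchange, so that only an $e$-positive remainder survives.

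The step I expect to be the genuine obstacle is (3), namely passing from Hall--Littlewood positivity to $e$-positivity. Via the Carlsson--Mellit relation to unicellular LLT polynomials one already knows $X_G(\vx;q)$ is Schur positive, but $e$-positivity is strictly stronger and in fact fails for general graphs, so no formal positivity-preservation argument can possibly deliver it; the involution is forced to exploit the fine structure of natural unit interval orders and nothing less. A realistic way to make the involution tractable is to first invoke Guay-Paquet's reduction, which shows it suffices to treat the orders whose incomparability graphs are glued from two cliques, and then to check that the $\LRP$ model collapses to a manifestly $e$-positive form on that restricted class. Even with the reduction, arranging that no fixed point of the involution survives with a negative sign is where essentially all the difficulty concentrates, and it is precisely this point at which \Cref{con:2} has so far resisted proof.
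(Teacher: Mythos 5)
There is a genuine gap, and it is the one you yourself name: your step (3) is not an argument but a placeholder for the entire difficulty. Steps (1) and (2) are formal — the Hall--Littlewood expansion with $\LRP$-coefficients exists (it is \Cref{thm:main} of this paper), and one can always re-expand $P_\mu(\vx;q)$ in the $e_\lambda$ basis — but the resulting coefficients are genuinely signed sums, since $P_\mu(\vx;q)$ is not $e$-positive (at $q=1$ it degenerates to $m_\mu$, which is very far from $e$-positive). No sign-reversing involution on linked rook placements accomplishing the cancellation is constructed, or even described beyond the requirement that it ``must exploit the fine structure'' of the order; a proof that consists of specifying what the missing lemma would have to do is not a proof. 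Your appeal to Guay-Paquet is also misdirected: his reduction \cite{Guay-Paquet2013} goes from $(3+1)$-free posets \emph{down to} natural unit interval orders (this is why \Cref{con:1} and \Cref{con:2} are equivalent), not from natural unit interval orders to graphs glued from two cliques, so it provides no further restriction of the class you must handle. Note moreover that the modular-law machinery of Abreu--Nigro, which drives this paper, cannot transport positivity either: the relation $(1+q)f(\gamma_1)=qf(\gamma_0)+f(\gamma_2)$ requires subtraction when solved for any one term, so knowing $e$-positivity on disjoint unions of complete graphs does not propagate it.

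You should also be aware that the paper contains no proof of this statement at all: it is stated as a conjecture (the Stanley--Stembridge conjecture), and the authors point to the proofs of Hikita \cite{Hikita} and of Griffin et al.~\cite{Griffin2025}, both of which proceed by entirely different methods (an explicit probabilistic formula for the $e$-coefficients, and a Macdonald-polynomial expansion, respectively) rather than through the linked-rook-placement Hall--Littlewood expansion developed here. Indeed, the paper's concluding sections treat the connection between its expansion and $e$-positivity as an open direction, which is consistent with your own admission that step (3) ``has so far resisted proof.''
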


This conjecture is known as the \emph{Stanley--Stembridge conjecture}.
In fact, the original conjecture was stated using a broader class of
posets, namely, \( (3+1) \)-free posets. However, Guay-Paquet
\cite{Guay-Paquet2013} showed that the two statements are equivalent.
The Stanley--Stembridge conjecture has been studied extensively by
many researchers. We remark that it was recently proved by Hikita
\cite{Hikita}, and also by Griffin et al.~\cite{Griffin2025}.

In the following, we highlight the connection between
\Cref{con:1,con:2} that is described in \cite{Stanley1993a}. Stanley
and Stembridge \cite[Proposition~4.1]{Stanley1993a} expressed
\( E_{\mu/\nu}^{(N)} \) in terms of permutations with restricted
positions, which naturally correspond to rook placements on a subset
of the \( [n]\times [n] \) grid, where \( [n]=\{1,2,\dots,n\} \). By
Equation (5.1) in \cite{Stanley1993a}, we have
\begin{equation}\label{eq:30}
  \omega(E^{(N)}_{\mu/\nu}) = \sum_{\alpha\vdash n} r_{\gamma,\alpha} \left( \prod_{i\ge1} m_i(\alpha)! \right) m_\alpha,
\end{equation}
where \( \omega \) is the involution sending \( h_\lambda \) to
\( m_\lambda \), \( \gamma \) is a certain (Ferrers) board in
\( [n]\times[n] \) that is determined by the partitions \( \mu \) and
\( \nu \), \( m_i(\alpha) \) is the multiplicity of \( i \) in a
partition \( \alpha \), and \( r_{\gamma,\alpha} \) is the number of
ways to place non-attacking rooks in the board \( \gamma \) whose type
is \( \alpha \); see \Cref{sec:preliminaries,sec:HL} for precise
definitions. Since the board \( \gamma \) is below the main diagonal
of the board \( [n]\times[n] \), it can be identified with a Dyck
path. Moreover, it is known that there is a correspondence between
natural unit interval orders and Dyck paths \cite{GMR2014, SR2003}, so
we use the notation \( X_\gamma(\bm x) \) to denote the chromatic
symmetric function indexed by the natural unit interval order
corresponding to the Dyck path \( \gamma \).

On the other hand, considering the definition of a chromatic symmetric
function, one can deduce that the chromatic symmetric function
\( X_\gamma (\bm x) \) also satisfies the same formula:
\begin{equation}\label{eqn:Xm}
  X_\gamma (\bm x)=\sum_{\alpha\vdash n} r_{\gamma,\alpha} \left( \prod_{i\ge1} m_i(\alpha)! \right) m_\alpha.
\end{equation}
Hence, by \eqref{eq:30} and \eqref{eqn:Xm}, we obtain
\( \omega(E^{(N)}_{\mu/\nu}) = X_\gamma (\bm x) \), which explains the
equivalence between \Cref{con:1} in the case \( \theta=(N) \) and
\Cref{con:2}. The formula \eqref{eqn:Xm} is important because it
connects the two different objects \( E^{(N)}_{\mu/\nu} \) and
\( X_\gamma (\bm x) \). The goal of this paper is to find a
\( q \)-analogue of this formula.

There are well-known \( q \)-analogues of chromatic symmetric
functions and monomial symmetric functions. In \cite{SW}, Shareshian
and Wachs introduced the \emph{chromatic quasisymmetric function}
\( X_G (\bm x;q) \) for any graph \( G \), which reduces to the
original chromatic symmetric function \( X_G (\bm x) \) when
\( q=1 \). They proved that \( X_G (\bm x ;q) \) is symmetric when
\( G \) is the incomparability graph of a natural unit interval order.
The \emph{Hall--Littlewood polynomials} \( P_\mu(\vx;q) \) are
symmetric functions, which generalize both Schur functions and
monomial symmetric functions, namely, \( P_\mu(\vx;0) = s_\mu(\vx) \)
and \( P_\mu(\vx;1) = m_\mu(\vx) \).

The main result of this paper is a combinatorial description of the
Hall--Littlewood expansion of \( X_\gamma (\bm x;q) \), which gives a
natural \( q \)-analogue of \eqref{eqn:Xm}. To define a
\( q \)-analogue \( r_{\gamma,\mu}(q) \) of the number
\( r_{\gamma,\mu} \) of rook placements with given conditions, we
introduce the notion of \emph{linked rook placements}. Linked rook
placements are essentially the same as ordinary rook placements, but
they provide a useful visualization to determine the weight of a rook
placement. Our main result is then stated as follows; see
\Cref{sec:HL} for precise definitions.

\begin{thm}\label{thm:main}
  For a Dyck path \(\gamma\in \D_n\), we have
\begin{equation}\label{eq:31}
  X_{\gamma}({\bm x};q)
  =\sum_{\mu\vdash n} q^{{\rm area}(\gamma) -n(\mu)} r_{\gamma,\mu}(q)
  \left( \prod_{i\ge 1}[m_{i} (\mu)]_q ! \right) P_{\mu}({\bm x};q).
\end{equation}
\end{thm}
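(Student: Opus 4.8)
The plan is to prove \eqref{eq:31} by expanding both sides in the monomial symmetric functions and comparing the coefficient of each monomial $\bm x^\alpha=x_1^{\alpha_1}x_2^{\alpha_2}\cdots$ indexed by a composition $\alpha$ of $n$. First I would read off the monomial expansion of the left-hand side directly from the Shareshian--Wachs definition of $X_\gamma(\bm x;q)$: collecting the proper colorings $\kappa$ of the incomparability graph attached to $\gamma$ according to their color content, the coefficient of $\bm x^\alpha$ is a generating polynomial $\sum_\kappa q^{\mathrm{asc}_\gamma(\kappa)}$, where the sum runs over proper colorings of content $\alpha$ and $\mathrm{asc}_\gamma(\kappa)$ is the Shareshian--Wachs statistic determined by the Dyck path $\gamma$. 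Setting $q=1$ and summing this coefficient over all rearrangements of a fixed partition must return the coefficient $r_{\gamma,\alpha}\prod_{i\ge1}m_i(\alpha)!$ of $m_\alpha$ in \eqref{eqn:Xm}; hence this step produces a genuine $q$-refinement of the classical count, which is exactly what the right-hand side should reproduce.

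For the right-hand side I would use the fact that $\{P_\mu(\bm x;q)\}_{\mu\vdash n}$ is a basis that is unitriangularly related to $\{m_\mu\}$, so its transition matrix to the monomials is invertible and has an explicit combinatorial description. Substituting the monomial expansion of each $P_\mu(\bm x;q)$ into \eqref{eq:31} turns the theorem into one scalar identity for every composition $\alpha$: the $q$-weighted coloring count from the previous paragraph must equal
\[
  \sum_{\mu\vdash n} q^{\area(\gamma)-n(\mu)}\, r_{\gamma,\mu}(q)\left(\prod_{i\ge1}[m_i(\mu)]_q!\right)\big[\bm x^\alpha\big]P_\mu(\bm x;q).
\]
Because $\{P_\mu\}$ is a basis, establishing all of these scalar identities is equivalent to the theorem, and it isolates the combinatorics of linked rook placements from the algebra of the Hall--Littlewood basis.

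The heart of the argument, and the step I expect to be the main obstacle, is a bijective or weight-preserving correspondence showing that each scalar identity holds. Here the linked rook placements should do the work: I would partition the proper colorings of content $\alpha$ according to an underlying non-attacking rook placement on $\gamma$, interpret the extra ordering data among equal color classes as the ``links,'' and verify that the induced weight reproduces both the factor $\prod_{i\ge1}[m_i(\mu)]_q!$ (which $q$-counts the shuffles of the $m_i(\mu)$ equal parts of size $i$) and the monomial-transition coefficients of $P_\mu$, while the prefactor $q^{\area(\gamma)-n(\mu)}$ reconciles the normalization of $\mathrm{asc}_\gamma$ on colorings with the statistic carried by a linked placement. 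The delicate point is that the monomial expansion of $P_\mu(\bm x;q)$ is itself intricate, so one must show that the linked-placement statistic is engineered precisely to invert this expansion rather than merely to match it at the level of leading terms. I would control this by induction on the dominance order on $\mu$ (using the unitriangularity to peel off the top term $m_\mu$ and handle the lower corrections), and I would cross-check the outcome at $q=1$, where the identity must collapse to \eqref{eqn:Xm}.
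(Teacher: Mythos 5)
There is a genuine gap: your proposal defers exactly the step that constitutes the proof. Reducing \eqref{eq:31} to one scalar identity per composition $\alpha$ is just a restatement of the theorem (since $\{P_\mu\}$ is a basis, or equivalently since both sides are symmetric it suffices to compare monomial coefficients), and the ``heart of the argument'' --- the weight-preserving correspondence between proper colorings of content $\alpha$ graded by $\mathrm{asc}$ and linked rook placements combined with the monomial-transition data of $P_\mu$ --- is asserted to be what the linked placements ``should'' accomplish, but is never constructed. Worse, there is a concrete obstruction to the positive matching you describe: the transition matrix from the Hall--Littlewood basis to the monomial basis has \emph{signed} entries, e.g.
\begin{equation*}
  P_{(2)}(\vx;q) \;=\; m_{(2)}(\vx) + (1-q)\,m_{(1,1)}(\vx),
\end{equation*}
so the coefficient of $\bm x^\alpha$ in the right-hand side of \eqref{eq:31} is an alternating sum, not a manifestly positive $q$-count. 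A partition of proper colorings into classes indexed by linked rook placements can only ever produce positive contributions; to match an alternating expansion you would need a sign-reversing involution or some cancellation mechanism, and the proposal contains none. The suggested induction on dominance order does not repair this: unitriangularity lets you peel off the top coefficient, but the ``lower corrections'' are precisely these signed terms, and the $q=1$ check (where all $P_\mu$ degenerate to $m_\mu$ and the signs vanish) gives no information about how to handle them.

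For comparison, the paper sidesteps the monomial expansion of $P_\mu$ entirely. It defines $Y_\gamma(\vx;q)$ to be the right-hand side of \eqref{eq:31} and invokes Abreu--Nigro's criterion (\Cref{thm:AN2}): a function $\D\to\Lambda_q$ satisfying the modular law is determined by its values on concatenations of the paths $N^kE^k$. It then verifies three facts about $Y$: the modular law (\Cref{pro:1}, proved by local case analysis using the row/column-swap bijections $\phi,\theta,\varphi,\vartheta$ on linked rook placements and tracking how $\fc$ changes), multiplicativity $Y_{\gamma+N^kE^k}=Y_\gamma\, Y_{N^kE^k}$ (\Cref{prop:multiplicativity}, which is where the Hall--Littlewood algebra enters, but only through the Pieri rule for $P_\mu\cdot P_{(1^k)}$, whose coefficients are positive $q$-binomials, matched bijectively in \Cref{lem:4}), and the complete-graph case $Y_{N^nE^n}=[n]_q!\,e_n$ (\Cref{pro:5}). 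Since $X_\gamma$ is known to satisfy the same three properties, equality follows. If you want to pursue your direct route, you would first need a workable signed combinatorial model for the coefficients $[\bm x^\alpha]P_\mu(\vx;q)$ and an involution pairing off the negative terms against colorings --- a substantially harder problem than the one the modular-law machinery solves.
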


In \cite{GR86}, the \emph{\( q \)-rook polynomial}
\( R_k (\gamma;q) \), for a Ferrers board \( \gamma \), is defined by
\[
  R_k(\gamma;q)=\sum_C q^{\inv (C)},
\]
where the sum is over the rook placements \( C \) of \( k \)
non-attacking rooks in \( \gamma \) and \( \inv(C) \) is a certain
statistic. Note that when \( q=1 \), we have \( R_k(\gamma;1)=r_{\gamma, (2^k1^{n-2k})} \).
Moreover, by considering the principal specialization of
\( X_\gamma (\bm x;q) \), we can derive the following relation
\[
   R_{n-k}(\gamma ;q) = \sum_{\substack{\mu\vdash n\\ \ell(\mu) = k}}r_{\gamma, \mu}(q),
\]
which implies that \( r_{\gamma,\mu}(q) \) is a refinement of the original \( q \)-rook polynomial; see Section \ref{sec:remarks} for the details.

We prove \Cref{thm:main} by showing that the right-hand side of
\eqref{eq:31} satisfies the modular laws and a multiplicativity
property in \cite{AN}. Note that \( q \)-rook polynomials and
\( q \)-hit polynomials have already appeared in the study of the
chromatic symmetric functions (cf. \cite{AN,CMP,HOY}). Hence the
existence of such a polynomial that refine \( q \)-rook polynomials
and that satisfy the modular laws may provide a better understanding of
various positivity phenomena in chromatic symmetric functions.

By applying the Carlsson--Mellit relation between the chromatic
symmetric functions and the unicellular LLT polynomials, we also
obtain the following modified Hall--Littlewood expansions of the
unicellular LLT polynomials.

\begin{thm}
  For a Dyck path \(\gamma\in \D_n\), we have
  \[
      \LLT_\gamma ({\bm x};q)
=(1-q)^{n-\ell(\mu)}\sum_{\mu\vdash n}q^{{\rm area}(\gamma)-n(\mu)}r_{\gamma, \mu}(q)\, \omega (H_\mu ({\bm x};q))
  \]
  and
\[
      \LLT_\gamma ({\bm x};q)
  =(1-q^{-1})^{n-\ell (\mu)}\sum_{\mu\vdash n} r_{\gamma, \mu}(q^{-1})  \tilde{H}_\mu ({\bm x};q),
\]
where \( H_\mu (\bm x;q) \) is the modified Hall--Littlewood polynomial and \( \tilde{H}_\mu ({\bm x};q)=q^{n(\mu)}H_\mu ({\bm x};q^{-1}) \).

\end{thm}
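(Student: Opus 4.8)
The plan is to feed the Hall--Littlewood $P$-expansion of \Cref{thm:main} through the Carlsson--Mellit relation, using the classical plethystic identity between $P_\mu$ and the modified Hall--Littlewood polynomial $H_\mu = Q'_\mu$ to trade the factor $\prod_{i\ge1}[m_i(\mu)]_q!$ for a power of $1-q$. Recall that the Carlsson--Mellit relation can be written as
\[
  \LLT_\gamma(\bm{x};q)=(1-q)^n\,\omega\, X_\gamma\!\left[\tfrac{\bm{x}}{1-q};q\right],
\]
where $[\,\cdot\,]$ denotes plethystic substitution. The first step is therefore to apply $\bm{x}\mapsto \bm{x}/(1-q)$ to \eqref{eq:31} term by term.

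The second step is the key cancellation. Writing $b_\mu(q)=\prod_{i\ge1}\prod_{j=1}^{m_i(\mu)}(1-q^j)=(1-q)^{\ell(\mu)}\prod_{i\ge1}[m_i(\mu)]_q!$, the standard identity $Q'_\mu=Q_\mu[\bm{x}/(1-q)]=b_\mu(q)\,P_\mu[\bm{x}/(1-q)]$ rearranges to
\[
  P_\mu\!\left[\tfrac{\bm{x}}{1-q};q\right]=\frac{H_\mu(\bm{x};q)}{(1-q)^{\ell(\mu)}\prod_{i\ge1}[m_i(\mu)]_q!}.
\]
Substituting this into the plethystically transformed \eqref{eq:31}, the factors $\prod_{i\ge1}[m_i(\mu)]_q!$ cancel exactly; applying $\omega$ and multiplying by $(1-q)^n$ leaves the power $(1-q)^{n-\ell(\mu)}$, which produces the first displayed formula (with the factor $(1-q)^{n-\ell(\mu)}$ understood inside the sum).

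For the second formula I would first pass from $\omega H_\mu$ to $\tilde{H}_\mu$. Using $\omega s_\lambda = s_{\lambda'}$ together with the conjugation symmetry of the Kostka--Foulkes polynomials, $K_{\lambda'\mu}(q)=q^{\,n(\mu)+n(\mu')}K_{\lambda\mu}(q^{-1})$, one checks (first on small $\mu$, then from the charge formula) that $\omega H_\mu(\bm{x};q)=q^{\,n(\mu')}\tilde{H}_\mu(\bm{x};q)$. Feeding this into the first formula converts each $\omega H_\mu$ into $\tilde{H}_\mu$ at the cost of a power $q^{\area(\gamma)-n(\mu)+n(\mu')}$. It then remains to absorb these $q$-powers into $r_{\gamma,\mu}(q^{-1})$ and to rewrite $(1-q)^{n-\ell(\mu)}$ as $(1-q^{-1})^{n-\ell(\mu)}$. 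This is exactly controlled by the palindromicity of the chromatic quasisymmetric function of Shareshian--Wachs, which, by the uniqueness of the Hall--Littlewood expansion, descends to a symmetry $r_{\gamma,\mu}(q)=q^{\,c(\gamma,\mu)}\,r_{\gamma,\mu}(q^{-1})$ for an explicit exponent $c(\gamma,\mu)$ matching the accumulated powers of $q$.

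The main obstacle will be the bookkeeping in this last step: correctly matching the signs and the powers of $q$ and $1-q$ produced by the plethysm, the identity $\omega H_\mu=q^{\,n(\mu')}\tilde{H}_\mu$, and the palindromic symmetry, and in particular pinning down $c(\gamma,\mu)$ so that all spurious factors cancel. The plethystic sign convention (whether one substitutes $\bm{x}/(1-q)$ or $\bm{x}/(q-1)$) must be fixed consistently throughout, since an inconsistent choice introduces a factor $(-1)^{n-\ell(\mu)}$ that would survive into the final coefficients.
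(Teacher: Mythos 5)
Your derivation of the first expansion is correct and is essentially the paper's own argument: you substitute \( \vx\mapsto \vx/(1-q) \) into \Cref{thm:HLexp}, use \( H_\mu(\vx;q)=\prod_{i\ge1}(q;q)_{m_i(\mu)}P_\mu[X/(1-q);q] \) (the paper's \eqref{eqn:Jmu}), and cancel the factors \( \prod_{i\ge1}[m_i(\mu)]_q! \); the paper runs the same computation with \( X/(q-1) \) and the sign rule \( f[-X]=(-1)^n\omega(f) \) instead of carrying \( \omega \) explicitly, which is only a cosmetic difference.

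Your route to the second expansion, however, breaks down at its first step. The identity \( \omega H_\mu(\vx;q)=q^{n(\mu')}\tilde{H}_\mu(\vx;q) \) is false: already for \( \mu=(2) \) one has \( H_{(2)}=s_{(2)} \), hence \( \omega H_{(2)}=s_{(1,1)} \), while \( \tilde{H}_{(2)}(\vx;q)=q^{n((2))}H_{(2)}(\vx;q^{-1})=s_{(2)} \), and these are not proportional; the Kostka--Foulkes symmetry you invoke, \( K_{\lambda'\mu}(q)=q^{n(\mu)+n(\mu')}K_{\lambda\mu}(q^{-1}) \), fails likewise (for \( \lambda=\mu=(2) \) the left side is \( 0 \) and the right side is \( q \)). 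The later step is also unjustified: palindromicity of \( X_\gamma \) does not descend ``by uniqueness of the expansion'' to a symmetry \( r_{\gamma,\mu}(q)=q^{c(\gamma,\mu)}r_{\gamma,\mu}(q^{-1}) \), because the basis \( \{P_\mu(\vx;q)\} \) is not invariant under \( q\mapsto q^{-1} \); re-expanding \( q^{\area(\gamma)}X_\gamma(\vx;q^{-1}) \) in the basis \( \{P_\mu(\vx;q)\} \) involves the nontrivial transition between \( \{P_\mu(\vx;q^{-1})\} \) and \( \{P_\mu(\vx;q)\} \), not a coefficientwise substitution. The correct fix, which is what the paper does, is to apply the Shareshian--Wachs symmetry \emph{before} expanding: write \( \LLT_\gamma(\vx;q)=(q-1)^n q^{\area(\gamma)} X_\gamma[X/(q-1);q^{-1}] \) using \( X_\gamma(\vx;q)=q^{\area(\gamma)}X_\gamma(\vx;q^{-1}) \), then expand \( X_\gamma(\,\cdot\,;q^{-1}) \) by \Cref{thm:HLexp} with \( q \) replaced by \( q^{-1} \); the identity \( \prod_{i\ge1}(q^{-1};q^{-1})_{m_i(\mu)}P_\mu[X/(1-q^{-1});q^{-1}]=H_\mu(\vx;q^{-1}) \) then yields \( q^{n(\mu)}H_\mu(\vx;q^{-1})=\tilde{H}_\mu(\vx;q) \) directly, so no relation between \( \omega H_\mu \) and \( \tilde{H}_\mu \) at the same value of \( q \) is ever needed. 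Equivalently, you may deduce the second formula from the first by substituting \( q\mapsto q^{-1} \) there and applying \( q^{\area(\gamma)}\omega(\cdot) \), via the symmetry \( \LLT_\gamma(\vx;q)=q^{\area(\gamma)}\omega\LLT_\gamma(\vx;q^{-1}) \).
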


\begin{remark}\label{rem:1}
Recently, Griffin et al.~\cite{Griffin2025} found a Macdonald
expansion of \( X_\gamma({\bm x};q) \). As a special case \( t=0 \),
they obtained its Hall--Littlewood expansion. It would be interesting
to find a connection between our formula and theirs.
\end{remark}

This paper is organized as follows. In Section
\ref{sec:preliminaries}, we provide definitions and classical
background material that we use throughout the paper. In Section
\ref{sec:HL}, we introduce linked rook placements and extended rook
placements, which are crucial ingredients in describing the
Hall--Littlewood coefficients of the chromatic symmetric functions. In
Section \ref{sec:proof}, we prove the main theorem by showing that the
Hall--Littlewood coefficients satisfy the modular laws and a
multiplicativity property, following Abreu--Nigro's criterion (see Theorem
\ref{thm:AN2}). We conclude this paper by presenting some open
problems that require further investigation in Section
\ref{sec:remarks}.

\section{Preliminaries}\label{sec:preliminaries}

In this section, we introduce necessary definitions and recall some
known results.

\subsection{Basic definitions}
\label{sec:basic-definitions}

For a positive integer \(n\), we use the notation \([n]\) to denote
the set \(\{1,2,\dots,n\}\) and \([n]_q\) to denote the \(q\)-integer
\([n]_q=\frac{1-q^n}{1-q}= 1+q+\cdots+q^{n-1}\). We also define
\( (a;q)_n = (1-a)(1-aq) \cdots (1-aq^{n-1}) \),
\( [n]_q!=[1]_q[2]_q \cdots [n]_q \), and
\( \qbinom{n}{k} = \frac{[n]_q!}{[k]_q! [n-k]_q!} \).

A \emph{partition} is a weakly decreasing sequence
\( \lambda=(\lambda_1,\dots,\lambda_\ell) \) of positive integers,
called \emph{parts}. The number \( \ell \) of parts is called the
\emph{length} of \( \lambda \), denoted by \( \ell(\lambda) \). We
identify the partition \( \lambda \) with its \emph{Young diagram}
that consists of \( \lambda_i \) cells in the \( i \)-th row from the
top for all \( i\in [\ell] \). If the sum of the parts of
\( \lambda \) equals \( n \), then we denote it by
\( \lambda\vdash n \). We also consider the partition
\( \lambda=(\lambda_1,\dots,\lambda_\ell) \) as the infinite sequence
\( (\lambda_1,\lambda_2,\dots) \) or a sequence
\( (\lambda_1,\dots,\lambda_m) \) for some \( m>\ell \), where
\( \lambda_i=0 \) if \( i>\ell \). The \emph{conjugate} partition of
\( \lambda \), denoted \( \lambda' \), is defined as the partition
whose Young diagram is obtained from the Young diagram of
\( \lambda \) by reflecting across the main diagonal. We denote by
\(\delta_n \) the staircase partition
\( (n-1, n-2, \dots, 2, 1, 0) \). For a partition \( \lambda \), we
let \( m_i (\lambda) \) be the number of \( i \)'s in \( \lambda \)
and \( n(\lambda)=\sum_{i\ge1} (i-1)\lambda_i \).

Given a partition \( \lambda \), a \emph{semistandard Young tableau}
of shape \( \lambda \) is a filling of \( \lambda \) with elements in
\( \mathbb{Z}_{>0} \) such that the entries are weakly increasing
along the rows and strictly increasing along the columns, from top to
bottom. For a semistandard Young tableau \( T \), we let the
\emph{weight} of \( T \), denoted by \( \text{wt}(T) \), be the
sequence \( (\text{wt}_1 (T),\text{wt}_2 (T),\dots )\), where
\( \text{wt}_i (T) \) is the number of occurrences of \( i \)'s in
\( T \). We use \( \SSYT(\lambda) \) to denote the set of semistandard
Young tableaux of shape \( \lambda \). We also use
\( \SSYT(\lambda, \mu) \) to denote the set of semistandard Young
tableaux of shape \( \lambda \) and weight \( \mu \).

A \emph{Dyck path} is a lattice path from \( (0,0) \) to \( (n,n) \)
consisting of north steps \(N= (0,1) \) and east steps \(E= (1,0) \)
that never goes below the line \( y=x \). Let \( \D_n \) denote the
set of Dyck paths from \( (0,0) \) to \( (n,n) \), and let
\(\displaystyle \D=\cup_{n\ge0} \D_n \) be the set of all Dyck paths.
Given a Dyck path \( \gamma\in \D_n \), let \( a_i (\gamma) \), for
\( 1\le i\le n \), denote the number of cells that lie below
\( \gamma \) and above the line \( y=x \), in the \( i \)-th row
counted from the left. We define the \emph{area} of \( \gamma \),
denoted \( \area(\gamma) \), to be the sum of \( a_i (\gamma) \), for
\( 1\le i\le n \).

\subsection{Symmetric functions}
\label{sec:symmetric-functions}

Let \( \vx= (x_1,x_2,\dots) \) be an infinite sequence of variables.
We denote by \( \Lambda_q=\Lambda_q(\vx) \) the algebra of symmetric
functions in the variables \( \vx \) with coefficients in
\( \mathbb{Q}(q) \). Each element in \( \Lambda_q \) is a formal power
series \( f(\vx) \) in the variables \( \vx \), with a bounded degree,
that is invariant under permutations of the variables. If there is no
possible confusion, we will omit the variables \( \vx \) and simply
write \( f \) instead of \( f(\vx) \).

For a partition \( \lambda=(\lambda_1, \lambda_2, \dots) \), the
\emph{monomial symmetric function} \( m_\lambda \) is defined by
\( m_\lambda =\sum_\nu \vx^\nu \), where the sum is over
all distinct permutations \( \nu=(\nu_1,\nu_2,\dots) \) of the
infinite sequence \( (\lambda_1, \lambda_2, \dots) \) and
\( \vx^\nu= \prod_{i\ge1} x_i ^{\nu_i} \). The \emph{elementary
  symmetric function} \( e_\lambda  \) is defined by
\( e_\lambda =\prod_{i\ge 1} e_{\lambda_i} \), where
\( e_0 = 1 \) and
\( e_k =\sum_{i_1<\cdots < i_k}x_{i_1}\cdots x_{i_k} \), for
\( k\ge 1 \).
The \emph{complete homogeneous symmetric function} \( h_\lambda (x_1,\dots,x_{n}) \) is defined by \( h_\lambda =\prod_{i=1}^{\ell(\lambda)} h_{\lambda_i} \), where \( h_0 = 1 \) and \( h_k =\sum_{i_1\le \cdots \le  i_k}x_{i_1}\cdots x_{i_k} \), for \( k\ge 1 \).
The \emph{power sum symmetric function} \( p_\lambda  \) is
defined by \( p_\lambda=\prod_{i\ge 1} p_{\lambda_i} \),
where \( p_0 =1 \) and \( p_k =\sum_{i\ge 1} x_i^k \), for
\( k\ge 1 \).

The \emph{Schur functions} \( s_\lambda(x_1,\dots,x_n) \) were first
studied by Cauchy in \cite{Cauchy}, where he defined them as a ratio
of alternants: for a partition \( \lambda \) with at most \( n \)
parts,
\begin{equation}\label{eqn:Schur}
  s_\lambda (x_1,\dots,x_{n}) =\frac{\det \left( x_i ^{\lambda_j +n-j} \right)_{i,j=1}^n}{\det \left( x_i ^{n-j} \right)_{i,j=1}^n}.
\end{equation}
From this definition, we see that \( s_\lambda (x_1,\dots,x_{n}) \) is
a symmetric function in the variables \( x_1,\dots,x_n \) with
homogeneous degree \( |\lambda| \), and that
\( s_\lambda(x_1,\dots,x_n,0) = s_\lambda(x_1,\dots,x_n) \). This
allows us to define the Schur function \( s_\lambda(\vx) \) with
infinite variables \( \vx=(x_1,x_2,\dots) \) by
\( s_\lambda(\vx) = \lim_{n\to \infty} s_\lambda(x_1,\dots,x_n) \),
which lies in \( \Lambda_q \).

Later, an equivalent combinatorial description of \( s_\lambda(\vx) \)
was found by Kostka \cite{Kostka}:
\begin{equation}\label{eqn:Schur_Kostka}
  s_\lambda(\vx)  =\sum_{T\in \SSYT (\lambda)}\vx^T=\sum_{\mu\vdash n}K_{\lambda\mu}m_{\mu}(\vx), 
\end{equation}
where \( \vx^{T} =\prod_i x_i ^{\text{wt}_i (T)} \) and
\( K_{\lambda\mu} \) is the number of semistandard Young tableaux of
shape \( \lambda \) and weight \( \mu \), called the \emph{Kostka
  number}.

\medskip

Plethysm is a certain type of substitution on symmetric functions. Here, we briefly introduce some plethystic
notations. For a fuller account, we refer the reader to \cite{Hag08, Mac}.

Let \( E=E(t_1, t_2,\dots) \) be a formal Laurent series in variables
\( t_1, t_2,\dots \) with rational coefficients. For any symmetric
function \(f\in \Lambda_q \), the \emph{plethystic substitution}
\( f[E] \) is defined as follows. First, for a power sum symmetric
function \( p_k \), we define \( p_k [E]=E(t_1 ^k, t_2 ^k,\dots) \);
that is, \( p_k [E] \) is obtained by replacing each \( t_i \) in
\( E \) by \( t_i ^k \). For a partition \( \lambda \), define
\( p_\lambda [E] = \prod_{i=1}^{\ell(\lambda)} p_{\lambda_i }[E] \).
Then, for a general \(f\in \Lambda_q \), we define
\[
 f[E] = \sum_{\lambda} c_\lambda p_\lambda [E],
\]
where the \( c_\lambda \) are the coefficients in the expansion
\( f=\sum_{\lambda} c_\lambda p_\lambda \).

Let \( X=x_1 + x_2 + \cdots \). Then for any
\( f\in \Lambda_q(\vx) \), we have \( f[X] = f(\vx) \). Moreover, if
\( f \) is homogeneous of degree \( n \), then
\begin{equation}\label{eq:28}
  f[-X] = (-1)^n \omega (f(\vx)),
\end{equation}
where \( \omega \) is the involution on \( \Lambda_q \) sending
\( s_\lambda \) to \( s_{\lambda'} \).

\subsection{Hall--Littlewood polynomials}

The \emph{Hall--Littlewood polynomials} \cite[p. 208]{Mac} are defined
as follows: for a partition \(\mu\) with at most \( n \) parts,
\begin{equation}\label{eq:29}
  P_\mu (x_1,\dots, x_n; q)=\frac{1}{\prod_{i\ge 0} [m_i (\mu)]_q !}
  \sum_{w\in S_n}w\left( x_1 ^{\mu_1}\cdots x_n ^{\mu_n}\prod_{1\le
      i<j\le n}\frac{x_i -qx_j}{x_i -x_j} \right),
\end{equation}
where \(m_i (\mu)\) is the number of \(i\)'s in
\(\mu=(\mu_1,\dots,\mu_n)\), \( S_n \) is the symmetric group on
\( [n] \), and \( w \) is the ring homomorphism with
\( w(x_i)=x_{w(i)} \). From the definition, we have
\( P_\mu(x_1,\dots,x_n,0) = P_\mu(x_1,\dots,x_n) \). Hence, as in the
case of Schur functions, the Hall--Littlewood polynomials
\( P_\mu(\vx) \) with infinite variables are defined by
\( P_\mu(\vx) = \lim_{n\to \infty} P_\mu(x_1,\dots,x_n) \).

Note that by comparing the definition of Schur functions in
\eqref{eqn:Schur} we see that the Schur functions arise from the
Hall--Littlewood polynomials by setting \(q=0\):
\[
  P_\mu (x_1,\dots, x_n ; 0) = s_\mu (x_1,\dots, x_n).
\]
Thus, Hall--Littlewood polynomials can be considered as
\(q\)-analogues of Schur functions. Moreover, if \( q=1 \), then the
right-hand side of \eqref{eq:29} becomes the monomial symmetric
function \( m_\lambda(x_1,\dots,x_n) \). Hence, \( P_\mu(\vx;q) \)
generalizes both \( m_\lambda(\vx) \) and \( s_\lambda(\vx) \).

In terms of the \emph{modified Schur functions} \(s_\lambda [X(1-q)]\),
the Hall--Littlewood polynomials have the following expansion:
\[
  P_\mu (\vx;q)=\sum_{\lambda\vdash
    |\mu|}\frac{K_{\lambda\mu}(q)}{\prod_{i=1} ^{\ell(\mu)}
    (q;q)_{m_i(\mu)}}s_{\lambda}[X(1-q)].
\]
Here, \(K_{\lambda\mu}(q)\) is the \emph{Kostka--Foulkes polynomial}
\[K_{\lambda\mu}(q)=\sum_{T\in\SSYT (\lambda,\mu)}q^{{\rm ch}(T)},\]
with \({\rm ch}(T)\) the \emph{charge} statistic (see \cite{Mac} for the details).
Then we can see that the expression \(\prod_{i\ge 1} (q;q)_{m_i(\mu)}P_\mu (\vx;q)\) has a positive 
expansion in terms of \(s_\lambda [X(1-q)]\), namely,
\begin{equation}\label{eqn:Jmu}
\prod_{i=1}^{\ell(\mu)} (q;q)_{m_i(\mu)}P_\mu (\vx;q)= \sum_{\lambda\vdash |\mu|} K_{\lambda\mu}(q) s_\lambda [X(1-q)].
\end{equation}
Note that \eqref{eqn:Jmu} is equal to the integral form Macdonald polynomial \(J_\mu (\vx;q, t)\), with \(q=0\) and \(t\) replaced 
by \(q\).

We define the \emph{modified Hall--Littlewood polynomials} \( P_\mu (\vx;q) \) by
replacing the modified Schur function by the Schur function in the
expansion \eqref{eqn:Jmu}:
\[H_\mu (\vx;q)=\sum_{\lambda\vdash |\mu|} K_{\lambda\mu}(q) s_{\lambda} (\vx).\]
Lastly, we define the \emph{modified transformed Hall--Littlewood polynomials}
\( \tilde{H}_\mu (\vx;q) \) by 
\[\tilde{H}_\mu (\vx;q)=\sum_{\lambda\vdash |\mu|} \tilde{K}_{\lambda\mu}(q) s_{\lambda} (\vx),\]
where \(\tilde{K}_{\lambda\mu}(q)=q^{n(\mu)}K_{\lambda\mu}(q^{-1})\), with \(n(\mu)=\sum_{i\ge1} (i-1)\mu_i\).
Note that, by definition, we have
\[
\tilde{H}_\mu (\vx;q)=q^{n(\mu)}H_\mu (\vx;q^{-1}).
\]

\subsection{Natural unit interval orders}\label{subsec:NUI}
Let \(P\) be a poset. The \emph{incomparability graph} \({\rm inc}(P)\) of
\(P\) is the graph whose vertex set is the set of elements in \(P\), with an
edge between each pair of incomparable elements.

Let \(\vm= (m_1, m_2, \dots, m_{n})\) be a sequence of non-decreasing
positive integers satisfying that \(i\leq m_i \leq n\), for each
\(i\in[n]\). The \emph{natural unit interval order} \(P(\vm)\)
corresponding to \( \vm \) is the poset on \([n]\) with the order
relation given by \(i<_{P(\vm)} j\) if \( m_i+1\le j\le n \). These
three related objects \(\vm\), \(P(\vm)\), and \({\rm inc}(P)\) can be
drawn in one picture, by using the Dyck path with the sequence of
column heights \( (m_1, m_2, \dots, m_{n})\). See Figure
\ref{fig:NUI_Dyck} for an example.

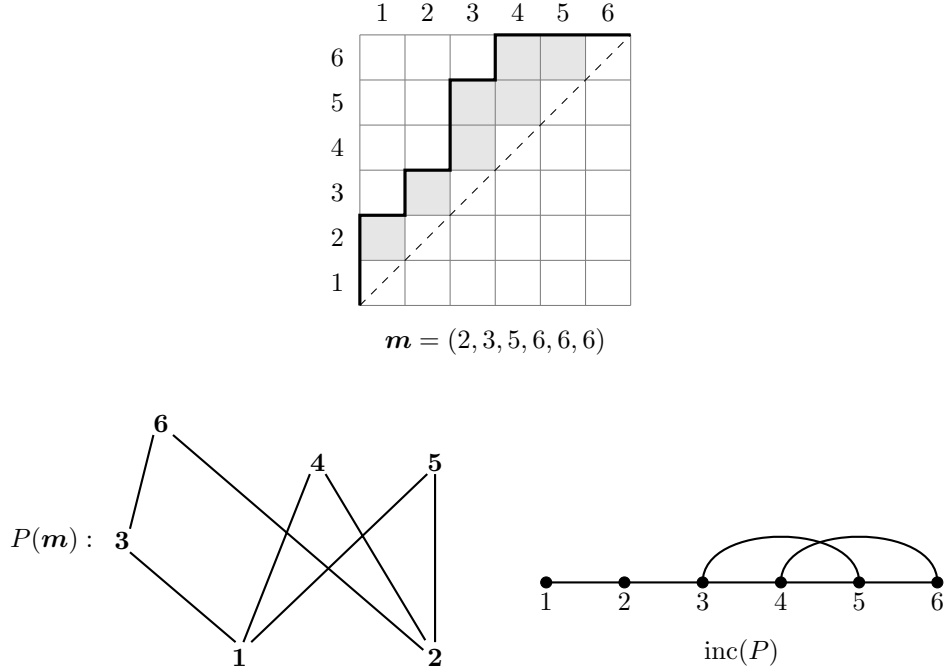
\begin{figure}
\begin{tikzpicture}[scale=.6]
   \fill[color=gray!20!white] (0,1)--(0,2)--(1,2)--(1,3)--(2,3)--(2,5)--(3,5)--(3,6)--(5,6)--(5,5)--(4,5)--(4,4)--(3,4)--(3,3)--(2,3)--(2,2)--(1,2)--(1,1)--(0,1)--cycle;
  \foreach \i in {0,...,6} {
 \draw[color=gray] (0,\i)--(6,\i);
  \draw[color=gray] (\i,0)--(\i,6);
  }
  \foreach \i in {1,...,6}{
\node () at (-.5, \i-.5) {$\i$};
\node () at (\i-.5, 6.5) {$\i$};
}
\draw [dashed] (0,0)--(6,6);
\draw[very thick] (0,0)--(0,2)--(1,2)--(1,3)--(2,3)--(2,5)--(3,5)--(3,6)--(6,6);
\node[] at (3, -.8) {\( \vm =(2,3,5,6,6,6) \)};
\end{tikzpicture}\\
\begin{tikzpicture}[scale=.52]
  \draw[thick] (2.2, 2.7)--(4.9, .4)
  (2.2, 3.3)--(2.8, 5.7)
  (3.3, 5.7)--(9.7, .2)
  (5.1, .4)--(6.8, 4.7)
  (5.3,.4)--(9.8, 4.7)
  (7.2, 4.7)--(9.75, .45)
 (10, .4)--(10, 4.7) ;
\node () at (5,0) {\( \bf{1} \)};
  \node () at (10,0) {\( \bf{2} \)};
  \node () at (2,3) {\( \bf{3} \)};
  \node () at (7,5) {\( \bf{4} \)};
  \node () at (10,5) {\( \bf{5} \)};
  \node () at (3,6) {\( \bf{6} \)};
  \node[] at (0, 7) {\phantom{Sdf}};
  \node[] at (.2, 3) {\( P(\vm): \)};
\end{tikzpicture}\qquad\quad
\begin{tikzpicture}[scale=.52]
  \draw[thick] (0,0)--(10,0);
  \draw [thick]  (4,0) to[out=90,in=90] (8,0);
  \draw [thick]  (6,0) to[out=90,in=90] (10,0);
  \filldraw (0,0) circle (4pt)
  (2,0) circle (4pt)
  (4,0) circle (4pt)
  (6,0) circle (4pt)
  (8,0) circle (4pt)
  (10,0) circle (4pt);
  \node () at (0, -.5) {\( 1 \)};
  \node () at (2, -.5) {\( 2 \)};
  \node () at (4, -.5) {\( 3 \)};
  \node () at (6, -.5) {\( 4 \)};
  \node () at (8, -.5) {\( 5 \)};
  \node () at (10, -.5) {\( 6 \)};
  \node[] at (5, -1.8) {\( \text{inc}(P) \)};
  \node[] at (0, -2) {\phantom{s}};
\end{tikzpicture}  
\caption{For a Dyck path with column heights \( \vm=(2,3,5,6,6,6) \), the corresponding natural unit interval order \( P(\vm) \) and the incomparability graph \( \text{inc}(P) \) are drawn below.}\label{fig:NUI_Dyck}
\end{figure}

In Figure \ref{fig:NUI_Dyck}, the elements in the diagonal are the
poset elements in \(P\), which are also the vertices of the
incomparability graph \({\rm inc}(P)\). The border line is the Dyck
path with column heights \(\vm = (2,3,5,6,6,6)\). The cells contained
in the shaded region correspond to the edges of \({\rm inc}(P)\), that
is, if the cell \((i, j)\) is within the shaded region, where \(i\) is
the column index and \(j\) is the row index, then the edge
\(\{ i, j\}\) is in the edge set of \({\rm inc}(P)\). The partition
shape region outside of the Dyck path represents the poset \(P\),
namely, if \((i, j)\) is a cell above the Dyck path, then
\(i<_{P(\vm)} j\). Then the edges of \( {\rm inc}(P) \) correspond to
the pairs \( \{i, j\} \) such that \( 1\le i<j\le n \) and
\( i\nless_{P(\vm)} j\). If \( \gamma \) is the Dyck path with the
column height sequence \( \vm \), then we also use the notation
\( i<_{\gamma} j \) for \(i<_{P(\vm)} j\), and
\( i\nless_{\gamma} j \) for \( i\nless_{P(\vm)} j\), respectively.

In this paper, when referring to a Dyck path \( \gamma \), we
implicitly identify the associated objects \( \vm \), \( P(\vm) \),
and \( \text{inc}(P) \).

\subsection{Chromatic quasisymmetric functions}\label{subsec:CQS}

Given a simple graph \(G=(V, E)\), a \emph{proper coloring} is any function \(\kappa : V \rightarrow \{1,2,3,\dots\}\)
satisfying that \(\kappa(u)\neq \kappa(v)\) for any \(u, v\in V\) such that \(\{u,v\}\in E\).  

\begin{defn}
The \emph{chromatic quasisymmetric function} of \(G\) is defined by 
\[X_G (\vx ;q)= \sum_{\kappa}q^{{\rm asc}(\kappa)} \vx^{\kappa},\]
where the sum is over all proper colorings
\(\kappa : V \rightarrow \{1,2,3,\dots\}\),
\[
  {\rm asc}(\kappa) = |\{ \{ i, j\}\in E \,:\, i < j \text{ and } \kappa(i)<\kappa(j) \} |,
\]
 and
\(\vx^\kappa =\prod_{v\in V}x_{\kappa(v)}\).
\end{defn}

As explained in Section \ref{subsec:NUI}, if we consider the
incomparability graph of a natural unit interval order, then we can
use the Dyck path associated with the sequence \(\vm\), as illustrated
in Figure \ref{fig:NUI_Dyck}, to compute the corresponding chromatic
quasisymmetric function. Let us use the notation \(X_\gamma (\vx; q)\)
to denote the chromatic quasisymmetric function
\(X_{{\rm inc} (P(\vm))}(\vx;q)\), where \(\gamma\) is the Dyck path
with column heights given by \( \vm \).

We remark that the incomparability graph of a natural unit interval
order is particularly significant, since Shareshian and
Wachs~\cite[Theorem 4.5]{SW} proved that \( X_G (\vx;q) \) is
symmetric if \( G \) arises in this way. Furthermore, in \cite{SW},
they conjectured the \( e \)-positivity of \( X_\gamma (\vx;q) \),
which is a refinement of the \( e \)-positivity conjecture originally
proposed by Stanley and Stembridge\cite{Stanley1993a}, which has now
been proven by Hikita \cite{Hikita} and also by Griffin et
al.~\cite{Griffin2025}.

 Given a Dyck path \( \gamma\in \D \), we let \( \gamma(i) \) denote the \( i \)-th column height.

\begin{defn}\label{def:modular_law}
  Let \( (\gamma_0, \gamma_1, \gamma_2) \in \D_n^3 \) and
  \( i\in [n-1] \). We say that \( (\gamma_0, \gamma_1, \gamma_2) \)
  is a \emph{modular triple of type}~\( (1,i) \) if the following
  conditions hold:
  \begin{enumerate}
  \item \(\gamma_0(i)+1=\gamma_1(i)=\gamma_2(i)-1\) and \(\gamma_1(i-1)<\gamma_1(i)<\gamma_1(i+1)\), where \(\gamma_1(0)=0\).
  \item \(\gamma_0(j)=\gamma_1(j)=\gamma_2(j)\) for all \(j\in [n]\setminus \{i\}\).
  \item \(\gamma_1(\gamma_1(i))=\gamma_1(\gamma_1(i)+1)\).
  \end{enumerate}
  We say that \( (\gamma_0, \gamma_1, \gamma_2) \) is a \emph{modular
    triple of type} \( (2,i) \) if the following conditions hold:
  \begin{enumerate}
  \item \(\gamma_1(i)+1=\gamma_1(i+1)\), \( \gamma_0(i) = \gamma_1(i) = \gamma_2(i)-1 \), and
    \( \gamma_0(i+1)+1 = \gamma_1(i+1) = \gamma_2(i+1) \).
  \item \(\gamma_0(j)=\gamma_1(j)=\gamma_2(j)\) for all \(j\in [n]\setminus \{i,i+1\}\).
  \item \( \gamma_1^{-1}(\{i\})=\emptyset \).
  \end{enumerate}

  For \( t\in \{1,2\} \), we denote by \( \MM_n^{t,i} \) the set of
  modular triples \( (\gamma_0, \gamma_1, \gamma_2) \in \D_n^3 \) of
  type \( (t,i) \). A function \( f:\D\rightarrow \Lambda_q \) is said
  to \emph{satisfy the modular law} if
  \[
    (1+q)f(\gamma_1)=q f(\gamma_0)+f(\gamma_2),
  \]
  for any \( (\gamma_0, \gamma_1, \gamma_2)\in \MM_n^{t,i} \) for
  \( n\ge1 \), \( i\in [n-1] \), and \( t\in \{1,2\} \).
\end{defn}

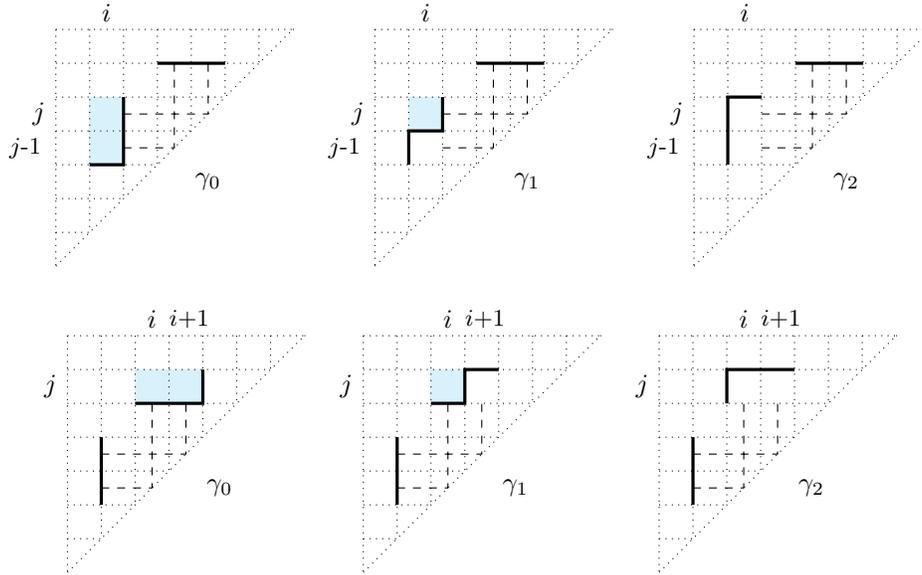
\begin{figure}
  \centering
\begin{tikzpicture}[scale=.45]
\fill[color=cyan!13!white](1,3)--(2,3)--(2,5)--(1,5)--(1,3)--cycle;
\foreach \i in {1,...,7}
\draw[dotted] (0,\i)--(\i,\i);
\foreach \i in {0,...,7}
\draw[dotted] (\i,7)--(\i,\i);
\draw[dotted] (0,0)--(7,7);
\draw[very thick] (3,6)--(5,6)
(1,3)--(2,3)--(2,5);
\draw[dashed] (2,3.5)--(3.5, 3.5)--(3.5, 6)
(2,4.5)--(4.5, 4.5)--(4.5, 6);
\node[] at (4.5, 2.5) {\( \gamma_0 \)};
\node[] at (1.5,7.5) {\( i \)};
\node[] at (-.5,4.5) {\( j \)};
\node[] at (-.9,3.5) {{\small \( j \)-1}};
  \end{tikzpicture}\quad
\begin{tikzpicture}[scale=.45]
\fill[color=cyan!13!white](1,4)--(2,4)--(2,5)--(1,5)--(1,4)--cycle;
\foreach \i in {1,...,7}
\draw[dotted] (0,\i)--(\i,\i);
\foreach \i in {0,...,7}
\draw[dotted] (\i,7)--(\i,\i);
\draw[dotted] (0,0)--(7,7);
\draw[very thick] (3,6)--(5,6)
(1,3)--(1,4)--(2,4)--(2,5);
\draw[dashed] (2,3.5)--(3.5, 3.5)--(3.5, 6)
(2,4.5)--(4.5, 4.5)--(4.5, 6);
\node[] at (4.5, 2.5) {\( \gamma_1 \)};
\node[] at (1.5,7.5) {\( i \)};
\node[] at (-.5,4.5) {\( j \)};
\node[] at (-.9,3.5) {{\small \( j \)-1}};
  \end{tikzpicture}\quad
\begin{tikzpicture}[scale=.45]
\foreach \i in {1,...,7}
\draw[dotted] (0,\i)--(\i,\i);
\foreach \i in {0,...,7}
\draw[dotted] (\i,7)--(\i,\i);
\draw[dotted] (0,0)--(7,7);
\draw[very thick] (3,6)--(5,6)
(1,3)--(1,5)--(2,5);
\draw[dashed] (2,3.5)--(3.5, 3.5)--(3.5, 6)
(2,4.5)--(4.5, 4.5)--(4.5, 6);
\node[] at (4.5, 2.5) {\( \gamma_2 \)};
\node[] at (1.5,7.5) {\( i \)};
\node[] at (-.5,4.5) {\( j \)};
\node[] at (-.9,3.5) {{\small \( j \)-1}};
  \end{tikzpicture}\\
\phantom{sdf}\\
  \begin{tikzpicture}[scale=.45]
\fill[color=cyan!13!white](2,5)--(4,5)--(4,6)--(2,6)--(2,5)--cycle;
\foreach \i in {1,...,7}
\draw[dotted] (0,\i)--(\i,\i);
\foreach \i in {0,...,7}
\draw[dotted] (\i,7)--(\i,\i);
\draw[dotted] (0,0)--(7,7);
\draw[very thick] (2,5)--(4,5)--(4,6)
(1,2)--(1,4);
\draw[dashed] (1,2.5)--(2.5,2.5)--(2.5,5)
(1, 3.5)--(3.5, 3.5)--(3.5, 5);
\node[] at (-.5,5.5) {\( j \)};
\node[] at (2.5,7.5) {\( i \)};
\node[] at (3.6,7.5) {{\small\( i\)+1}};
\node[] at (4.5, 2.5) {\( \gamma_0 \)};
\end{tikzpicture}\quad
    \begin{tikzpicture}[scale=.45]
\fill[color=cyan!13!white](2,5)--(3,5)--(3,6)--(2,6)--(2,5)--cycle;
\foreach \i in {1,...,7}
\draw[dotted] (0,\i)--(\i,\i);
\foreach \i in {0,...,7}
\draw[dotted] (\i,7)--(\i,\i);
\draw[dotted] (0,0)--(7,7);
\draw[very thick] (2,5)--(3,5)--(3,6)--(4,6)
(1,2)--(1,4);
\draw[dashed] (1,2.5)--(2.5,2.5)--(2.5,5)
(1, 3.5)--(3.5, 3.5)--(3.5, 5);
\node[] at (-.5,5.5) {\( j \)};
\node[] at (2.5,7.5) {\( i \)};
\node[] at (3.6,7.5) {{\small\( i\)+1}};
\node[] at (4.5, 2.5) {\( \gamma_1 \)};
\end{tikzpicture}\quad
\begin{tikzpicture}[scale=.45]
\foreach \i in {1,...,7}
\draw[dotted] (0,\i)--(\i,\i);
\foreach \i in {0,...,7}
\draw[dotted] (\i,7)--(\i,\i);
\draw[dotted] (0,0)--(7,7);
\draw[very thick] (2,5)--(2,6)--(4,6)
(1,2)--(1,4);
\draw[dashed] (1,2.5)--(2.5,2.5)--(2.5,5)
(1, 3.5)--(3.5, 3.5)--(3.5, 5);
\node[] at (-.5,5.5) {\( j \)};
\node[] at (2.5,7.5) {\( i \)};
\node[] at (3.6,7.5) {{\small\( i\)+1}};
\node[] at (4.5, 2.5) {\( \gamma_2 \)};
\end{tikzpicture}
    \caption{Modular triples of type \( (1,i) \) (top) and type \( (2,i) \) (bottom).}
  \label{fig:hs}
\end{figure}

Abreu and Nigro \cite{AN} provided the following useful tool for proving
that the chromatic quasisymmetric function has a certain formula.

\begin{thm}\cite[Theorem 1.2]{AN}\label{thm:AN2}
  Let \(A\) be a \(\mathbb{Q}(q)\)-algebra and let
  \(f:\D\rightarrow A\) be a function that satisfies the modular law.
  Then \(f\) is determined by its values
  \(f(K_{n_1}\sqcup K_{n_2}\sqcup \cdots \sqcup K_{n_\ell})\) on the
  disjoint unions of complete graphs \( K_{n_1},K_{n_2},\dots,K_{n_\ell} \),
  for all \( \ell\ge1 \) and \( n_1,n_2,\dots,n_\ell\ge1 \).
\end{thm}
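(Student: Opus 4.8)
The plan is to recast the theorem as a spanning result in a free module and then prove it by a straightening argument driven by the modular law. Fix $n$ (modular triples preserve the size $n$, so it suffices to treat each $\D_n$ separately) and work in the free $\mathbb{Q}(q)$-module $V=\mathbb{Q}(q)\langle \D_n\rangle$ on the set of Dyck paths. Let $R\subseteq V$ be the submodule generated by all elements $q[\gamma_0]+[\gamma_2]-(1+q)[\gamma_1]$ coming from modular triples $(\gamma_0,\gamma_1,\gamma_2)\in\MM_n^{t,i}$, for $t\in\{1,2\}$ and $i\in[n-1]$. Since $f$ satisfies the modular law, its linear extension $\tilde f\colon V\to A$ vanishes on $R$ and therefore factors through $V/R$. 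Consequently, once I know that the classes of the disjoint unions of complete graphs span $V/R$, I can write each $[\gamma]$ as an explicit $\mathbb{Q}(q)$-combination of such classes, apply $\tilde f$, and conclude that $f(\gamma)$ is pinned down by the values of $f$ on disjoint unions of complete graphs. Note that only spanning is required, not linear independence, and that confluence of the rewriting below is not needed: any expression of $[\gamma]$ modulo $R$ over the distinguished paths yields the same value $f(\gamma)$ automatically, since $\tilde f$ kills $R$.

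\textbf{The straightening step.} I would use $\area$ as the complexity measure. In both move types, solving the modular relation for the highest path gives the congruence $[\gamma_2]\equiv (1+q)[\gamma_1]-q[\gamma_0]\pmod R$, and a direct inspection of \Cref{def:modular_law} shows that raising a single column height by one in type $(1,i)$, and the two one-step height changes in type $(2,i)$, yield $\area(\gamma_0)<\area(\gamma_1)<\area(\gamma_2)$. Thus this congruence rewrites the class $[\gamma_2]$ as a $\mathbb{Q}(q)$-combination of classes of strictly smaller area. Call $\gamma$ \emph{reducible} if $\gamma=\gamma_2$ for some modular triple, and \emph{irreducible} otherwise. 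Starting from an arbitrary $[\gamma]$ and repeatedly replacing each reducible path that appears by the right-hand side of its congruence, the multiset of areas present strictly decreases in a well-founded order; since $\D_n$ is finite, the process terminates, leaving a combination supported on irreducible paths.

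\textbf{The key lemma (main obstacle).} What remains, and where essentially all the work lies, is to identify the irreducible paths: I claim they are exactly the disjoint unions of complete graphs, i.e.\ the block-diagonal paths obtained by concatenating maximal staircases (full squares) glued along the main diagonal. For the terminal direction I would verify directly that no such block-diagonal path can occur as $\gamma_2$: the type-$(1,i)$ requirements, in particular the strict chain $\gamma_1(i-1)<\gamma_1(i)<\gamma_1(i+1)$ together with the flatness condition $\gamma_1(\gamma_1(i))=\gamma_1(\gamma_1(i)+1)$, and likewise the type-$(2,i)$ requirements, cannot be met with a concatenation of full squares as the top path; hence clique-unions are irreducible. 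For the converse I would argue that a path which is \emph{not} a disjoint union of complete graphs must exhibit an interior corner obstructing block-diagonality, and then, by a case analysis on the local shape at that corner, exhibit an explicit type-$(1,i)$ or type-$(2,i)$ triple realizing the path as $\gamma_2$. Matching this combinatorial notion of a defect to the precise local conditions of \Cref{def:modular_law}, uniformly across the two move types, is the hard part I expect to dominate the proof.

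\textbf{Conclusion.} Granting the key lemma, the straightening algorithm of the second paragraph terminates with a $\mathbb{Q}(q)$-linear combination supported on disjoint unions of complete graphs, which shows that their classes span $V/R$; by the first paragraph this gives the theorem. As a consistency check that the strategy is not proving too much, the reduction does \emph{not} collapse every path onto the single minimal staircase $K_1\sqcup\cdots\sqcup K_1$ precisely because the modular law is not always applicable: it is the restrictive conditions in \Cref{def:modular_law} that make all $2^{n-1}$ clique-unions terminal, rather than only the area-minimal one.
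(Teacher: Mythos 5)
Your reduction to a spanning statement is sound (factoring $f$ through $V/R$ is correct, and spanning of the clique-union classes would indeed give \Cref{thm:AN2}), but note first that the paper offers no proof to compare against: \Cref{thm:AN2} is imported verbatim from Abreu--Nigro \cite{AN}. Judged on its own merits, your argument has a genuine gap: the key lemma is false, in \emph{both} directions, already for $n=3$. Writing paths as column-height sequences, the triple $\gamma_0=(1,3,3)$, $\gamma_1=(2,3,3)$, $\gamma_2=(3,3,3)$ is a modular triple of type $(1,1)$ in the sense of \Cref{def:modular_law}: indeed $\gamma_0(1)+1=\gamma_1(1)=\gamma_2(1)-1$, $0<\gamma_1(1)=2<\gamma_1(2)=3$, and $\gamma_1(\gamma_1(1))=\gamma_1(2)=3=\gamma_1(3)=\gamma_1(\gamma_1(1)+1)$. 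Hence the clique union $K_3=(3,3,3)$ \emph{is} reducible, contrary to your terminal-direction claim. Conversely, the non-clique path $(2,3,3)$ is \emph{irreducible}: as $\gamma_2$ of type $(1,1)$ it would force $\gamma_0=(0,3,3)\notin\D_3$ (and condition (3) fails for $\gamma_1=(1,3,3)$, since $\gamma_1(1)=1\ne 3=\gamma_1(2)$); for type $(1,2)$ the condition $\gamma_1(1)<\gamma_1(2)$ would read $2<2$; for type $(2,1)$ the condition $\gamma_1(1)+1=\gamma_1(2)$ would read $2=3$; and for type $(2,2)$ it would force $\gamma_0=(2,2,2)\notin\D_3$ (besides $\gamma_1^{-1}(\{2\})\ne\emptyset$). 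In fact the only modular triples in $\D_3$ are $((1,3,3),(2,3,3),(3,3,3))$ and $((2,2,3),(2,3,3),(3,3,3))$, so the unique reducible path is $K_3$ itself; your straightening halts at the non-clique path $(2,3,3)$ and spanning is not established. Your closing ``consistency check'' is also exactly backwards, since $K_3$ is not terminal.

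The obstruction is structural, not a repairable case analysis at a corner. Clique unions sit both at the maximum of area ($N^nE^n$) and at the minimum (the staircase $K_1\sqcup\cdots\sqcup K_1$), with non-clique paths strictly in between, so a rewriting that strictly decreases area at every step cannot have the clique unions as its terminal objects. The value $f((2,3,3))$ is determined only by solving the modular relation for the \emph{middle} path, $f(\gamma_1)=\bigl(q\,f(\gamma_0)+f(\gamma_2)\bigr)/(1+q)$, which expresses it through $f(K_1\sqcup K_2)$ and $f(K_3)$ --- that is, through a path of \emph{larger} area, precisely the move your monotone scheme never makes. (This is also where invertibility of $1+q$ in $\mathbb{Q}(q)$ enters; your scheme, which only ever solves for $\gamma_2$, never uses the hypothesis that $A$ is a $\mathbb{Q}(q)$-algebra, a further sign it cannot suffice.) Any correct proof, including the one in \cite{AN}, must use the relation solved for $\gamma_1$ or $\gamma_0$ as well, organized by an induction that is not monotone in area; supplying such an induction is the substantive content missing from your proposal.
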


\subsection{LLT polynomials}\label{subsec:LLT}

LLT polynomials are a family of symmetric functions introduced by
Lascoux, Leclerc, and Thibon in \cite{LLT}, which naturally arise in
the description of the power-sum plethysm operators on symmetric
functions. The original definition of LLT polynomials uses
\emph{cospin} statistic of ribbon tableaux, but Haiman and Bylund
found a consistent statistic, called \emph{inv}, defined on
\(k\)-tuples of semistandard Young tableaux of various skew shapes.
The Bylund--Haiman formula is described in \cite{HHLRU}, where it is
proved that two definitions are equivalent.

Especially when the skew shape consists of single cells, the LLT
polynomials are called \emph{unicellular}. In this case, the diagram
of a tuple of single cells corresponds to a Dyck path\cite{CM, Hag08}.
Since we only consider the unicellular LLT polynomials in this paper,
we define them over the set of Dyck paths.

\begin{defn}
Given a Dyck path \(\gamma\), the \emph{unicellular LLT polynomial} indexed by \(\gamma\) is 
\[
  \LLT_\gamma (\vx; q) = \sum_{w\in \mathbb{Z}_{>0}^n} q^{\inv(w)}\vx^w,
\]
where \(\inv(w) = |\{ (i, j)  \,:\, 1\le i<j\le n,\, i\nless_{\gamma} j, \text{ and } w_i>w_j \} |\).
\end{defn}

Combining the observations from  Sections \ref{subsec:CQS} and
\ref{subsec:LLT}, we see that both the unicellular LLT polynomials and
the chromatic quasisymmetric functions can be computed with respect to a Dyck
path \(\gamma\), and that the \(\inv\) and \({\rm asc}\)
statistics essentially count the same pairs of integers. A subtle  discrepancy
between the two definitions lies in the properness of the colorings
used when computing the chromatic quasisymmetric functions, as
opposed to the fact that all possible words of length \(n\) are
considered when computing \(\LLT_\gamma (\vx;q)\). The precise
relationship between the two has been explained via plethysm by Carlsson and
Mellit in \cite[Proposition 3.4]{CM}.

\begin{prop}\cite[Proposition 3.4]{CM}
  \label{prop:CM}
Let \(\gamma\) be the Dyck path corresponding to 
the incomparability graph \({\rm inc}(P(\vm))\) of a natural unit interval order \(P(\vm)\) with \(n\) elements. Then we have 
\begin{equation}\label{eqn:CM}
 X_\gamma (\vx;q)= (q-1)^{-n}\LLT_\gamma [(q-1)X;q]\, .
 \end{equation}
\end{prop}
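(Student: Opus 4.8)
The plan is to prove \Cref{prop:CM} combinatorially by expanding both sides as (signed) generating functions over colorings of the vertex set $[n]$ of ${\rm inc}(P(\vm))$ and matching them term by term. The right-hand side of \eqref{eqn:CM} is the only nontrivial object, so the whole task is to give a combinatorial reading of the plethystic substitution $\LLT_\gamma[(q-1)X;q]$ and to show that, after dividing by $(q-1)^n$, it collapses to the sum over \emph{proper} colorings that defines $X_\gamma(\vx;q)$.

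First I would unpack the alphabet. Writing $(q-1)X = qX - X$, the definition $\LLT_\gamma(\vx;q)=\sum_{w}q^{\inv(w)}\vx^w$ over words in any ordered alphabet lets me substitute a \emph{super alphabet}: a totally ordered set consisting of a positive copy of $\mathbb{Z}_{>0}$, realizing $qX$, and a barred copy, realizing $-X$. Splitting $[n]$ according to which copy each vertex uses gives
\[
\LLT_\gamma[(q-1)X;q]=\sum_{S\subseteq[n]}q^{\,c_\gamma(S)}\,\LLT_{\gamma[S]}(q\vx;q)\,\LLT_{\gamma[S^c]}[-X;q],
\]
where $\gamma[S]$ denotes the Dyck path structure induced on $S$ and $c_\gamma(S)$ records the $\inv$-contribution of the edges crossing between $S$ and $S^c$ under the chosen ordering of the two copies. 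The positive part satisfies $\LLT_{\gamma[S]}(q\vx;q)=q^{|S|}\LLT_{\gamma[S]}(\vx;q)$, and by \eqref{eq:28} the barred part equals $(-1)^{|S^c|}\omega(\LLT_{\gamma[S^c]})(\vx;q)$, so each barred vertex contributes a sign and a conjugated (column-strict) reading. Re-expanding every factor back into words then presents $\LLT_\gamma[(q-1)X;q]$ as a single signed sum over colorings of $[n]$ decorated by a positive/barred type on each vertex, with global sign $(-1)^{\#\text{barred}}$ and a $q$-weight extending $\inv$.

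On this signed model I would run a sign-reversing, weight-preserving involution whose fixed points are exactly the proper colorings. The involution targets a vertex sitting on a monochromatic edge (a violation of properness) and toggles its type between positive and barred; this flips the sign while preserving both the monomial and the extended $q$-statistic, so all non-proper contributions cancel. On a proper coloring every vertex is free to be positive or barred and these choices no longer cancel, so summing the two types at each vertex produces exactly one factor $(q-1)$ per vertex — matching the isolated-vertex computation $\LLT[(q-1)X]=(q-1)X$ — together with a residual edge weight that I would identify with $q^{{\rm asc}(\kappa)}$. Dividing by $(q-1)^n$ then yields $\sum_{\kappa\ \mathrm{proper}}q^{{\rm asc}(\kappa)}\vx^\kappa=X_\gamma(\vx;q)$.

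The main obstacle is the precise construction and verification of this involution: because the barred letters carry the conjugating involution $\omega$ coming from \eqref{eq:28}, equal colors in the positive and barred copies interact with $\inv$ asymmetrically, and one must check that the toggle is genuinely an involution that is simultaneously sign-reversing, monomial-preserving, and $q$-preserving. Equally delicate is reconciling the descent-type statistic $\inv$ native to $\LLT_\gamma$ with the ascent statistic ${\rm asc}$ native to $X_\gamma$ on the surviving proper colorings; here I would use that on a proper coloring every edge is either an ascent or a descent, together with the palindromicity of $X_\gamma$ in $q$, and carefully track the $(q-1)^n$ normalization. As a consistency check, setting $q=1$ must recover the passage from all colorings to proper colorings underlying \eqref{eqn:Xm}.
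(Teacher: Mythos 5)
Note first that the paper you were given does not actually prove \Cref{prop:CM}: it is imported verbatim from \cite[Proposition~3.4]{CM}, so the only proof to compare against is the one in that reference, which---like your plan---superizes the word expansion of \( \LLT_\gamma \) and cancels the improper colorings by a sign-reversing involution. Your outline is therefore the standard route, but the step you explicitly defer (``the precise construction and verification of this involution'') is the entire mathematical content of the proposition, and the sketch as written has two concrete defects.

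First, your conventions are incompatible with each other. For the crossing weight \( q^{c_\gamma(S)} \) in your displayed decomposition to depend only on \( S \), the super alphabet must be ordered with every barred letter above (or below) every positive one; but under that order the inversion status of an edge joining two \emph{differently} colored vertices depends on the bars, so an individual proper coloring does not contribute \( (q-1)^n q^{\mathrm{asc}(\kappa)}\vx^\kappa \) (for \( K_2 \) and the coloring \( (\kappa(1),\kappa(2))=(a,b) \) with \( a<b \), the four bar patterns give \( q^2+1-q-q^2=1-q \), not \( (q-1)^2 \)). The clean per-vertex collapse you invoke at the end requires instead the interleaved order \( 1<\bar 1<2<\bar 2<\cdots \), under which crossing inversions are value-dependent and your \( S \)-decomposition is false. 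Second, and more seriously, toggling the bar at an \emph{arbitrary} endpoint of a monochromatic edge is not weight-preserving: under the interleaved order a vertex \( i \) of color \( c \) contributes \( q \) if positive and \( -q^{d(i)} \) if barred, where \( d(i) \) is the number of same-colored neighbors \( j>i \), because the toggle changes the inversion status of \emph{every} pair \( \{i,j\} \) with \( \kappa(j)=\kappa(i) \). The toggle reverses sign and preserves weight only at a vertex with \( d(i)=1 \); the correct choice is the largest vertex \( i^* \) of the color class having a same-colored neighbor of larger index, and the fact that \( d(i^*)=1 \) is exactly the indifference property of natural unit interval graphs (\( i^*<j_1<j_2 \) and \( i^*\nless_\gamma j_2 \) force \( j_1\nless_\gamma j_2 \)). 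Your argument never invokes the unit-interval hypothesis, yet without it the cancellation fails outright: for the graph on \( \{1,2,3\} \) with edges \( \{1,2\},\{1,3\} \) only and all three vertices colored alike, the signed sum over bar patterns is \( (q-q^2)(q-1)^2\neq 0 \). Supplying this canonical toggle vertex, verifying that it yields an involution on super words, and fixing one consistent ordering convention (after which the residual statistic is \( \mathrm{des} \), recovered as \( \mathrm{asc} \) via palindromicity, as you note) is precisely the missing work.
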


\section{Hall--Littlewood expansions}\label{sec:HL}

In this section, we introduce linked rook placements and state our
main theorem.

\subsection{Linked rook placements}

Let \( n \) be a fixed positive integer.

\begin{defn}
  Let \( \gamma\in \D_n \) with the associated partition
  \( \lambda\subseteq \delta_n \). A \emph{linked rook} of
  \( \gamma \) is a sequence \( L =(\nu_1, \dots, \nu_{\ell}) \) of
  cells \( \nu_i=(a_i, b_i)\in \lambda \), called \emph{rooks}, with
  \( 1\le a_i<b_i\le n \) such that \( b_{i}=a_{i+1} \) for all
  \( i\in[\ell-1] \). We say that a cell \((i, j)\), for
  \(1\le i<j\le n\), \emph{attacks} every cell \( (i',j') \) such that
  \( i'\in \{i,j\} \) or \( j'\in \{i,j\} \). A \emph{linked rook
    placement} on \( \gamma \) is a set \( \{L_1,\dots,L_k\} \) of
  linked rooks \( L_i \) of \( \gamma \) such that if \( u\in L_i \)
  and \( v\in L_j \) for \( i\ne j \), then \( u \) and \( v \) do not
  attack each other. We denote by \( \LRP(\gamma) \) the set of linked
  rook placements on \( \gamma \).
\end{defn}

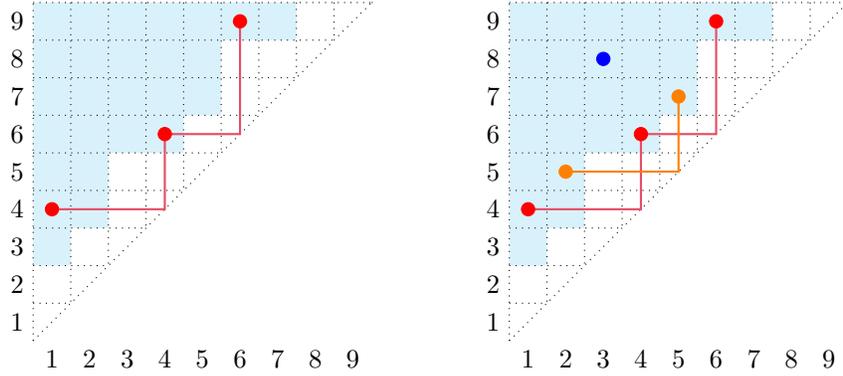
\begin{figure}
\begin{tikzpicture}[scale=.5]
\fill[color=cyan!13!white](0,2)--(1,2)--(1,3)--(2,3)--(2,5)--(4,5)--(4,6)--(5,6)--(5,8)--(7,8)--(7,9)--(0,9)--(0,2)--cycle;
\draw[dotted] (0,0)--(0,9);
\foreach \i in {1,...,9}{
  \node[below] at (\i-0.5,0) {\i};
  \node[left] at (0,\i-0.5) {\i};
  \draw[dotted] (0,\i)--(\i,\i);}
\foreach \i in {1,...,9}
\draw[dotted] (\i,9)--(\i,\i);
\draw[dotted] (0,0)--(9,9);
\draw[thick, color=dredcolor] (0.5,3.5)--(3.5, 3.5)--(3.5, 5.5)--(5.5,5.5)--(5.5, 8.5);
\filldraw [red] (0.5,3.5) circle (5pt)
(3.5,5.5) circle (5pt)
(5.5,8.5) circle (5pt);
\end{tikzpicture}\qquad\qquad 
\begin{tikzpicture}[scale=.5]
\fill[color=cyan!13!white](0,2)--(1,2)--(1,3)--(2,3)--(2,5)--(4,5)--(4,6)--(5,6)--(5,8)--(7,8)--(7,9)--(0,9)--(0,2)--cycle;
\draw[dotted] (0,0)--(0,9);
\foreach \i in {1,...,9}{
  \node[below] at (\i-0.5,0) {\i};
  \node[left] at (0,\i-0.5) {\i};
  \draw[dotted] (0,\i)--(\i,\i);}
\foreach \i in {1,...,9}
\draw[dotted] (\i,9)--(\i,\i);
\draw[dotted] (0,0)--(9,9);
\draw[thick, color=dredcolor] (0.5,3.5)--(3.5, 3.5)--(3.5, 5.5)--(5.5,5.5)--(5.5, 8.5);
\draw[thick, color=orange] (1.5,4.5)--(4.5, 4.5)--(4.5, 6.5);
\filldraw [red] (0.5,3.5) circle (5pt)
(3.5,5.5) circle (5pt)
(5.5,8.5) circle (5pt);
\filldraw [orange] (1.5,4.5) circle (5pt)
(4.5,6.5) circle (5pt);
\filldraw [blue] (2.5,7.5) circle (5pt);
\end{tikzpicture}
\caption{The diagram on the left shows a linked rook
  \( ((1,4), (4,6), (6,9)) \) of the Dyck path \( \gamma\in \D_n \)
  corresponding to the partition
  \( \lambda=(7,5,5,4,2,2,1)\subseteq \delta_9 \). The diagram on the right
  shows a linked rook placement \( \{L_1,L_2,L_3\} \) on \( \gamma \),
  where \( L_1=((1,4), (4,6), (6,9)) \), \( L_2=((2,5), (5,7)) \), and
  \( L_3=((3,8)) \).}
\label{fig:linkedrook_set}
\end{figure}

Pictorially, the rooks contained in a linked rook are connected by
``links'' as shown in \Cref{fig:linkedrook_set}. Note that if we
remove all links from a linked rook placement, then we obtain an
ordinary rook placement, that is, a set of rooks such that no two
rooks lie in a row or in a column. There is a unique way to recover
the linked rook placement from its underlying rook placement by
connecting rooks \( (i,j) \) and \( (i',j') \) with a link whenever
\( j=i' \). Hence, one can identify a linked rook placement with its
underlying rook placement.

\begin{defn}
  Let \( \gamma\in \D_n \). The \emph{extended row} of \( \gamma \) is the region
  between \( y=n \) and \( y=n+1 \). An \emph{extended linked rook} of
  \( \gamma \) is a sequence \( \tilde{L}=(u_1,u_2,\dots,u_{2\ell}) \) of
  cells \( (i,j) \), called \emph{extended rooks}, with
  \( 1\le i\le j\le n \), satisfying the following conditions:
  \begin{enumerate}
  \item if \( \ell\ge 2 \), then \(L= (u_2,u_4,\dots,u_{2\ell-2}) \) is a linked rook of
    \( \gamma \),
  \item \( u_1, u_3,\dots,u_{2\ell -1} \) lie on the main diagonal,
    and \( u_{2\ell} \) lies in the extended row of \( \gamma \),
  \item for each \( i\in [\ell] \), \( u_{2i} \) is above \( u_{2i-1} \) in the same column,
  \item for each \( i\in [\ell-1] \), \( u_{2\ell+1} \) is to the right of \( u_{2\ell} \) in the same row.
  \end{enumerate}
  We define the \emph{length} \( \ell(\tilde{L}) \) of \( \tilde{L} \)
  by \( \ell(\tilde{L})=\ell \). For each \( i\in [2\ell] \), the
  \emph{rank} of \( u_{i} \) is defined to be \( \lfloor i/2 \rfloor \). We say
  that \(L= (u_2,u_4,\dots,u_{2\ell-2}) \) is the \emph{underlying
    linked rook} of \(\tilde{L} \).
\end{defn}
See Figure~\ref{fig:image2} for examples of extended linked rooks.

\begin{figure}
  \centering
  \begin{tikzpicture}[scale=.5]
\fill[color=cyan!13!white](0,2)--(1,2)--(1,3)--(2,3)--(2,5)--(4,5)--(4,6)--(5,6)--(5,8)--(7,8)--(7,9)--(0,9)--(0,2)--cycle;
\draw[dotted] (0,0)--(0,9);
\foreach \i in {1,...,9}{
  \node[below] at (\i-0.5,0) {\i};
  \node[left] at (0,\i-0.5) {\i};
  \draw[dotted] (0,\i)--(\i,\i);}
\foreach \i in {1,...,9}
\draw[dotted] (\i,9)--(\i,\i);
\draw[dotted] (0,0)--(9,9);
\draw[thick] (2.5, 2.5)--(2.5, 9.5);
\filldraw (2.5, 2.5) circle (5pt)
(2.5, 9.5) circle (5pt);
\node[] at (2.5, 10) {\tiny\( 1 \)};
\node[] at (3, 2.5) {\tiny\( 0 \)};
\end{tikzpicture}
\qquad \qquad 
    \begin{tikzpicture}[scale=.5]
\fill[color=cyan!13!white](0,2)--(1,2)--(1,3)--(2,3)--(2,5)--(4,5)--(4,6)--(5,6)--(5,8)--(7,8)--(7,9)--(0,9)--(0,2)--cycle;
\draw[dotted] (0,0)--(0,9);
\foreach \i in {1,...,9}{
  \node[below] at (\i-0.5,0) {\i};
  \node[left] at (0,\i-0.5) {\i};
  \draw[dotted] (0,\i)--(\i,\i);}
\foreach \i in {1,...,9}
\draw[dotted] (\i,9)--(\i,\i);
\draw[dotted] (0,0)--(9,9);
\draw[thick] (.5, .5)--(.5, 3.5)--(3.5, 3.5)--(3.5, 5.5)--(5.5, 5.5)--(5.5, 8.5)--(8.5, 8.5)--(8.5, 9.5);
\filldraw (.5, 3.5) circle (5pt)
(.5, .5) circle (5pt)
(3.5, 3.5) circle (5pt)
(3.5, 5.5) circle (5pt)
(5.5, 5.5) circle (5pt)
(8.5, 8.5) circle (5pt)
(8.5, 9.5) circle (5pt)
(5.5, 8.5) circle (5pt); 
\node[] at (.5, 4) {\tiny\( 1 \)};
\node[] at (3.5, 6) {\tiny\( 2 \)};
\node[] at (5, 8.6) {\tiny\( 3 \)};
\node[] at (8.5, 10) {\tiny\( 4 \)};
\node[] at (1, 0.5) {\tiny\( 0 \)};
\end{tikzpicture}
\caption{The diagram on the left shows an extended linked rook of length
  \( 1 \), and the diagram on the right shows an extended linked rook of length
  \( 4 \) whose underlying linked rook is the left diagram in
  \Cref{fig:linkedrook_set}. The ranks of the topmost extended rook in
  its column and the leftmost extended rook in its row are indicated.}
  \label{fig:image2}
\end{figure}
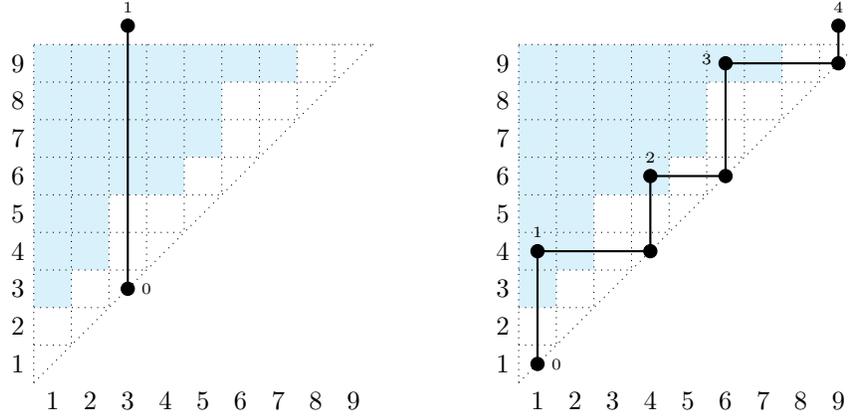

\begin{defn}
  An \emph{extended linked rook placement} on \( \gamma\in\D_n \) is a
  set \( P=\{\tilde{L}_1,\dots,\tilde{L}_k\} \) of extended linked rooks \( \tilde{L}_i \) of
  \( \gamma \) whose nonempty underlying linked rooks form a linked
  rook placement on \( \gamma \) and every square in the main diagonal
  is contained in exactly one of the extended linked rooks. The
  \emph{type} of \( P \) is defined to be the partition obtained by
  rearranging the sequence \( (\ell(\tilde{L}_1),\dots,\ell(\tilde{L}_k)) \) in weakly
  decreasing order.
\end{defn}

Note that for a linked rook placement \( P \) of \( \gamma\in\D_n \),
there is a unique extended linked rook placement on \( \gamma \) whose
underlying linked rook placement is \( P \), which we denote by
\( \ext(P) \). Hence, the extended linked rook placements
can be identified with the linked rook placements,
and thus with the ordinary rook placements.
The type of \( P \) is defined to be the type of
\( \ext(P) \). See \Cref{fig:ELRP}. We denote by
\( \LRP(\gamma,\mu) \) the set of linked rook placements on
\( \gamma \) with type \( \mu \).

\begin{figure}
  \centering
  \begin{tikzpicture}[scale=.5]
    \fill[color=cyan!13!white](0,3)--(1,3)--(1,4)--(2,4)--(2,5)--(4,5)--(4,7)--(6,7)--(6,9)--(0,9)--(0,3)--cycle;
\draw[dotted] (0,0)--(0,9);
\foreach \i in {1,...,9}{
  \node[below] at (\i-0.5,0) {\i};
  \node[left] at (0,\i-0.5) {\i};
  \draw[dotted] (0,\i)--(\i,\i);
}
\foreach \i in {1,...,9}
\draw[dotted] (\i,9)--(\i,\i);
\draw[dotted] (0,0)--(9,9);
\draw[thick] (.5, 3.5)--(3.5, 3.5)--(3.5, 8.5);
\draw[thick, color=blue] (1.5, 4.5)--(4.5, 4.5)--(4.5, 7.5);
\filldraw (.5, 3.5) circle (5pt)
(3.5, 8.5) circle (5pt); 
\filldraw[color=blue] (1.5, 4.5) circle (5pt)
(4.5, 7.5) circle (5pt);
\end{tikzpicture}\qquad\quad
  \begin{tikzpicture}[scale=.5]
    \fill[color=cyan!13!white](0,3)--(1,3)--(1,4)--(2,4)--(2,5)--(4,5)--(4,7)--(6,7)--(6,9)--(0,9)--(0,3)--cycle;
\draw[dotted] (0,0)--(0,9);
\foreach \i in {1,...,9}{
  \node[below] at (\i-0.5,0) {\i};
  \node[left] at (0,\i-0.5) {\i};
  \draw[dotted] (0,\i)--(\i,\i);
}
\foreach \i in {1,...,9}
\draw[dotted] (\i,9)--(\i,\i);
\draw[dotted] (0,0)--(9,9);
\draw[thick] (.5, .5)--(.5, 3.5)--(3.5, 3.5)--(3.5, 8.5)--(8.5, 8.5)--(8.5, 9.5);
\draw[thick, color=blue] (1.5, 1.5)--(1.5, 4.5)--(4.5, 4.5)--(4.5, 7.5)--(7.5, 7.5)--(7.5, 9.5);
\draw[thick, color=orange] (2.5, 2.5)--(2.5, 9.5);
\draw[thick, color=red] (5.5, 5.5)--(5.5, 9.5);
\draw[thick, color=purple] (6.5, 6.5)--(6.5, 9.5);
\filldraw (.5, 3.5) circle (5pt)
(.5, .5) circle (5pt)
(3.5, 3.5) circle (5pt)
(8.5, 8.5) circle (5pt)
(8.5, 9.5) circle (5pt)
(3.5, 8.5) circle (5pt); 
\filldraw[color=blue] (1.5, 4.5) circle (5pt)
(4.5, 7.5) circle (5pt)
(1.5, 1.5) circle (5pt)
(4.5, 4.5) circle (5pt)
(7.5, 7.5) circle (5pt)
(7.5, 9.5) circle (5pt);
\filldraw[color=orange] (2.5, 2.5) circle (5pt)
(2.5, 9.5) circle (5pt);
\filldraw[color=red] (5.5, 5.5) circle (5pt)
(5.5, 9.5) circle (5pt);
\filldraw[color=purple] (6.5, 6.5) circle (5pt)
(6.5, 9.5) circle (5pt);
\node[] at (.5, 4) {\tiny\( 1 \)};
\node[] at (2.5, 10) {\tiny\( 1 \)};
\node[] at (5.5, 10) {\tiny\( 1 \)};
\node[] at (6.5, 10) {\tiny\( 1 \)};
\node[] at (1.5, 5) {\tiny\( 1 \)};
\node[] at (3.5, 9) {\tiny\( 2 \)};
\node[] at (4.5, 8) {\tiny\( 2 \)};
\node[] at (8.5, 10) {\tiny\( 3 \)};
\node[] at (7.5, 10) {\tiny\( 3 \)};
\node[] at (1, 0.5) {\tiny\( 0 \)};
\node[] at (2, 1.5) {\tiny\( 0 \)};
\node[] at (3, 2.5) {\tiny\( 0 \)};
\node[] at (6, 5.5) {\tiny\( 0 \)};
\node[] at (7, 6.5) {\tiny\( 0 \)};
\end{tikzpicture}
\caption{The diagram on the left shows a linked rook placement \( P \) of the
  Dyck path \( \gamma\in \D_9 \) corresponding to
  \( \lambda=(6,6,4,4,2,1) \). The right diagram shows its
  corresponding extended linked rook placement \( \ext(P) \), where the
  ranks of the topmost extended rook in each column and the leftmost
  extended rook in each row are indicated. The type of \( P \), or
  equivalently, the type of \( \ext(P) \), is \( (3,3,1,1,1) \).
}
  \label{fig:ELRP}
\end{figure}
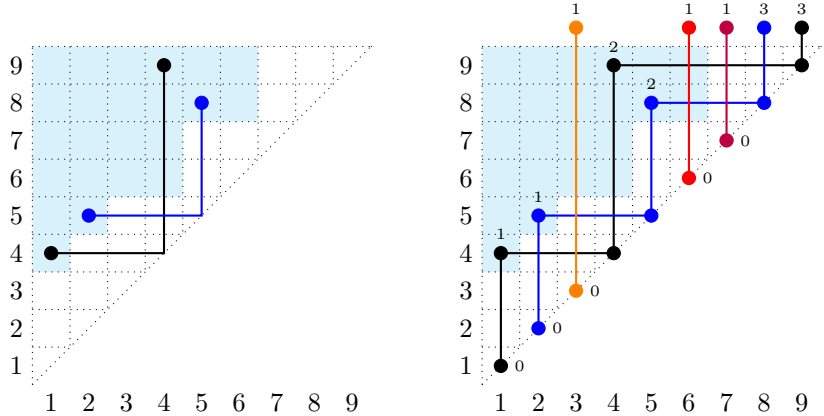

\begin{defn}\label{def:fcpair}
  Let \( \gamma\in \D_n \) and \( P\in \LRP(\gamma) \). Consider a
  cell \( (i,j) \) with \( 1\le i<j\le n \) above \( \gamma \). Let
  \( a \) be the rank of the topmost extended rook \( (i,j') \) of
  \( \ext(P) \) in column \( i \) and let \( b \) be the rank of the
  leftmost extended rook \( (i',j) \) of \( \ext(P) \) in row \( j \).
  The cell \( (i,j) \) of \( P \) is \emph{free} (with respect to
  \( \gamma \)) if
  \begin{center}
   (\( i<i' \) and \( b\le a \)) \quad or \quad (\( i'<i \) and \( b< a \)). 
  \end{center}
   In this case, we call \( ((i,j'),(i',j)) \) an
  \emph{fc-pair}. We denote by \( \fc_\gamma(P) \) the number of free cells
  of \( P \).
\end{defn}

See Figure~\ref{fig:image3} for a pictorial description of fc-pairs.
Note that in Definition \ref{def:fcpair},
\( (i',j) \) may lie on the main diagonal and \( (i,j') \) may lie in
the extended row, i.e., we may have \( i'=j \) or \( j'=n+1 \). Note
also that, by definition, the free cells are in bijection with the
fc-pairs. Hence, \( \fc_{\gamma}(P) \) is equal to the number of fc-pairs of
\( P \).

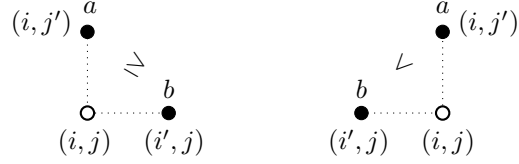
\begin{figure}
  \centering
  \begin{tikzpicture}[scale=.36]
    \draw[dotted] (.5, 3.5)--(.5, .5)--(3.5, .5);
    \filldraw (.5, 3.5) circle (7pt)
    (3.5, .5) circle (7pt);
    \draw[thick, fill=white] (.5, .5) circle (7pt);
    \node[] at (.6, 4.4) {\( a \)};
    \node[] at (3.5, 1.4) {\( b \)};
    \node[] at (.4, -.6) {\( (i, j) \)};
    \node[] at (3.7, -.6) {\( (i', j) \)};
    \node[] at (-1.2, 3.8) {\( (i, j') \)};
    \node[rotate=330] at (2.3, 2.3) {\( \ge \)};
  \end{tikzpicture}\qquad\qquad 
  \begin{tikzpicture}[scale=.36]
    \draw[dotted] (.5, .5)--(3.5, .5)--(3.5, 3.5);
    \filldraw (3.5, 3.5) circle (7pt)
    (.5, .5) circle (7pt);
    \draw[thick, fill=white] (3.5, .5) circle (7pt);
    \node[] at (3.5, 4.4) {\( a \)};
    \node[] at (.5, 1.4) {\( b \)};
    \node[] at (.4, -.6) {\( (i', j) \)};
    \node[] at (3.7, -.6) {\( (i, j) \)};
    \node[] at (5.2, 3.8) {\( (i, j') \)};
    \node[rotate=45] at (2, 2.3) {\( < \)};
    \end{tikzpicture}
    \caption{The two cases of fc-pairs, where \( a \) is the rank of
      the topmost extended rook in column \( i \) and \( b \) is the
      rank of the leftmost extended rook in row \( j \).}
  \label{fig:image3}
\end{figure}

\begin{defn}\label{def:1}
  For \( \gamma\in \D_n \) and \( \mu\vdash n \), we define
\[
  r_{\gamma,\mu}(q) = \sum_{P\in\LRP(\gamma,\mu)} q^{\fc_{\gamma}(P)}.
\]
\end{defn}

\subsection{Hall--Littlewood expansion}

We now state our main theorem, which gives a combinatorial
interpretation for the coefficients of the Hall--Littlewood expansion
of chromatic quasisymmetric functions.

\begin{thm}\label{thm:HLexp}
  For a Dyck path \(\gamma\in \D_n\), we have
\begin{equation}\label{eqn:HLexpthm}
X_{\gamma}(\vx;q)=\sum_{\mu\vdash n} q^{\area(\gamma) -n(\mu)} r_{\gamma,\mu}(q) \prod_{i\ge 1}[m_{i} (\mu)]_q ! P_{\mu}(\vx;q).
\end{equation}
\end{thm}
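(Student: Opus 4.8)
The plan is to apply the Abreu--Nigro criterion, \Cref{thm:AN2}. Let $f\colon\D\to\Lambda_q$ denote the function sending $\gamma$ to the right-hand side of \eqref{eqn:HLexpthm}. The chromatic quasisymmetric function $X_\gamma(\vx;q)$ is known to satisfy the modular law of \Cref{def:modular_law} (this is what makes \Cref{thm:AN2} applicable to it), and by \Cref{thm:AN2} any function satisfying the modular law is determined by its values on the disjoint unions $K_{n_1}\sqcup\cdots\sqcup K_{n_\ell}$ of complete graphs. Hence it suffices to prove two things: first, that $f$ also satisfies the modular law; and second, that $f$ and $X$ agree on every such disjoint union of cliques. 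Granting both, $f$ and $X$ are two modular-law functions with identical values on all disjoint unions of cliques, so \Cref{thm:AN2} forces $f=X$.

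First I would reduce the modular law for $f$ to a relation on the polynomials $r_{\gamma,\mu}(q)$. Fix a modular triple $(\gamma_0,\gamma_1,\gamma_2)\in\MM_n^{t,i}$. In both types, each of the two steps $\gamma_0\to\gamma_1\to\gamma_2$ raises a single column height by one, so exactly one cell is added below the path at each step and $\area(\gamma_s)=\area(\gamma_0)+s$ for $s\in\{0,1,2\}$. Since $n(\mu)$ and $\prod_{i\ge1}[m_i(\mu)]_q!$ are independent of $\gamma$ and the $P_\mu(\vx;q)$ form a basis of $\Lambda_q$, the modular law $(1+q)f(\gamma_1)=qf(\gamma_0)+f(\gamma_2)$ is equivalent, after dividing by the common factor $q^{\area(\gamma_0)+1-n(\mu)}\prod_{i\ge1}[m_i(\mu)]_q!$, to the system
\begin{equation}\label{eq:plan-rmod}
  (1+q)\,r_{\gamma_1,\mu}(q)=r_{\gamma_0,\mu}(q)+q\,r_{\gamma_2,\mu}(q),\qquad \mu\vdash n.
\end{equation}
Geometrically, the associated boards satisfy $\lambda_{\gamma_0}\supset\lambda_{\gamma_1}\supset\lambda_{\gamma_2}$, each step deleting one of the two boxes highlighted in \Cref{fig:hs}.

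The heart of the proof is then the combinatorial identity \eqref{eq:plan-rmod}, which I would rewrite as $r_{\gamma_1,\mu}(q)-r_{\gamma_0,\mu}(q)=q\bigl(r_{\gamma_2,\mu}(q)-r_{\gamma_1,\mu}(q)\bigr)$. My approach is to classify the placements in $\LRP(\gamma_0,\mu)$, $\LRP(\gamma_1,\mu)$, and $\LRP(\gamma_2,\mu)$ according to whether and how they occupy the two removable boxes, and to relate their free-cell counts. For a placement $P$ avoiding both boxes, the extended placement $\ext(P)$, and hence every rank, is the same on all three boards, so by \Cref{def:fcpair} the counts differ only by the freeness of the removable boxes themselves, yielding clean relations of the form $\fc_{\gamma_0}(P)=\fc_{\gamma_1}(P)+[\text{lower box free}]$ and $\fc_{\gamma_1}(P)=\fc_{\gamma_2}(P)+[\text{upper box free}]$. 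For placements that do occupy a removable box, I would construct an explicit type-preserving, weight-shifting bijection that slides the offending rook to the next admissible box and re-links, and verify, using the two cases of \Cref{fig:image3} together with condition~(3) of the relevant modular type (imposed exactly to make the local configuration uniform), that the free-cell count changes by precisely the power of $q$ dictated by \eqref{eq:plan-rmod}. The type $(2,i)$ case is handled in the same way, with the two removable boxes lying in the two adjacent modified columns. I expect this rank-and-free-cell bookkeeping, namely confirming that the local move shifts $\fc_\gamma$ by exactly the right amount uniformly over all placements and both modular types, to be the principal obstacle.

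Finally I would settle the values on disjoint unions of complete graphs. The Dyck path of $K_{n_1}\sqcup\cdots\sqcup K_{n_\ell}$ is the concatenation of the maximal paths of the blocks, and I would first establish the multiplicativity property $f(K_{n_1}\sqcup\cdots\sqcup K_{n_\ell})=\prod_j f(K_{n_j})$ by showing that the generating function $\sum_{P}q^{\fc_\gamma(P)}$, graded by type, factors over the blocks; this is the multiplicativity part of the Abreu--Nigro framework and requires checking that linked rooks threading through several blocks reorganize so as not to disturb the blockwise product. This reduces the claim to a single clique $K_n$. For $K_n$ the path is maximal, so the board $\lambda$ is empty and the only linked rook placement is the empty one; it has type $(1^n)$ and $\fc_\gamma=0$, whence $r_{K_n,(1^n)}(q)=1$ and $r_{K_n,\mu}(q)=0$ for $\mu\ne(1^n)$. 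Since $\area(K_n)=\binom{n}{2}=n((1^n))$, $\prod_{i\ge1}[m_i((1^n))]_q!=[n]_q!$, and $P_{(1^n)}(\vx;q)=e_n(\vx)$, the right-hand side of \eqref{eqn:HLexpthm} collapses to $[n]_q!\,e_n(\vx)$. On the other hand a proper coloring of $K_n$ is an injection, and summing $q^{\operatorname{asc}(\kappa)}$ over the colorings with a fixed image yields $[n]_q!$ by the standard inversion/coinversion generating function, so $X_{K_n}(\vx;q)=[n]_q!\,e_n(\vx)$ as well. This matches, and together with the multiplicativity it verifies the hypotheses of \Cref{thm:AN2} and completes the proof.
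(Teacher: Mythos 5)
Your overall architecture is exactly the paper's (Abreu--Nigro criterion, reduction of the modular law for $f$ to the coefficient identity $(1+q)\,r_{\gamma_1,\mu}(q)=r_{\gamma_0,\mu}(q)+q\,r_{\gamma_2,\mu}(q)$, multiplicativity over cliques, and the computation $f(K_n)=X_{K_n}=[n]_q!\,e_n$), and those reductions are sound. The first genuine gap is in your treatment of placements avoiding both removable boxes. Your ``clean relations'' $\fc_{\gamma_0}(P)=\fc_{\gamma_1}(P)+u$ and $\fc_{\gamma_1}(P)=\fc_{\gamma_2}(P)+v$, with $u,v\in\{0,1\}$ the freeness indicators of the two boxes, are correct, but they yield the needed identity pointwise only when $u=v$: dividing $(1+q)q^{\fc_{\gamma_1}(P)}=q^{\fc_{\gamma_0}(P)}+q^{1+\fc_{\gamma_2}(P)}$ by $q^{\fc_{\gamma_1}(P)}$ gives $1+q=q^{u}+q^{1-v}$, which fails when $(u,v)=(1,0)$ (it reads $1+q=2q$) and when $(u,v)=(0,1)$ (it reads $1+q=2$). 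So this class cannot be dispatched cell by cell; the paper introduces a weight-controlled involution $\theta$ (swapping the two relevant rows and columns) precisely to pair off the placements with $(u,v)=(1,0)$ against those with $(u,v)=(0,1)$, and only the paired contributions balance. You propose bijective machinery only for placements that occupy a removable box, so your proof of the modular law does not go through for the avoiding class.

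The second, larger gap is multiplicativity. The assertion that $\sum_P q^{\fc_\gamma(P)}$, graded by type, ``factors over the blocks'' is false: already for $K_1\sqcup K_1$ the board consists of the single cell $(1,2)$ joining the two blocks, the empty placement (type $(1,1)$) has that cell free, so $r_{\gamma,(1,1)}(q)=q\neq 1=r_{K_1,(1)}(q)\,r_{K_1,(1)}(q)$, and there is also a type-$(2)$ placement whose linked rook threads both blocks and has no blockwise counterpart. Moreover, block factorization of rook counts would not even imply $f(\gamma+N^kE^k)=f(\gamma)f(N^kE^k)$, because the Hall--Littlewood basis is not multiplicative: $P_\mu\,P_{(1^k)}$ expands via the Pieri rule as a sum over vertical strips $\nu/\mu$ with $q$-binomial coefficients. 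The actual content of this step in the paper is the identity expressing $r_{\gamma+N^kE^k,\nu}(q)$ as $\sum_{\mu}q^{n(\nu)-n(\mu)-\binom{k}{2}}r_{\gamma,\mu}(q)$ times explicit $q$-binomial factors, proved by a bijection sending a placement on the enlarged board to a pair consisting of a placement on the small board and a word recording how the linked rooks extend into the new columns, together with a $q$-counting lemma for those words. Your phrase ``reorganize so as not to disturb the blockwise product'' is precisely the statement that requires this machinery, so as written the multiplicativity step is not established.
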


We will prove this theorem in Section \ref{sec:proof}.

\begin{exam}
  Consider the Dyck path \(\gamma\) with column heights
  \((2,2,4,4,5)\). The coefficient of \(P_{(3,2)}(\vx;q)\) in
  \(X_{\gamma}(\vx;q)\) is \(q^2 +2q+1\), which can be computed by
  considering the linked rook placements of type \((3,2)\). In Figure
  \ref{fig:type32}, all possible linked rook placements and the
  corresponding extended linked rook placements of type \( (3,2) \)
  are given. Note that in this example, \(\area(\gamma) = 2\) and
  \(n(\mu)=2\), so \(\area(\gamma)-n(\mu)=0\).

\begin{figure}
  \begin{center}
\begin{tikzpicture}[scale=.5]
\fill[color=cyan!13!white](0,2)--(2,2)--(2,4)--(4,4)--(4,5)--(0,5)--(0,2)--cycle;
\draw[dotted] (0,0)--(0,5);
\foreach \i in {1,...,5}
\draw[dotted] (0,\i)--(\i,\i);
\foreach \i in {1,...,5}
\draw[dotted] (\i,5)--(\i,\i);
\draw[dotted] (0,0)--(5,5);
\draw[thick, color=dredcolor] (0.5,2.5)--(2.5, 2.5)--(2.5, 4.5);
\filldraw [red] (0.5,2.5) circle (5pt)
(2.5,4.5) circle (5pt);
\filldraw [blue] (1.5,3.5) circle (5pt);
\node[] at (1, -.5) {\phantom{sdf}};
\end{tikzpicture}
\qquad 
\begin{tikzpicture}[scale=.5]
\fill[color=cyan!13!white](0,2)--(2,2)--(2,4)--(4,4)--(4,5)--(0,5)--(0,2)--cycle;
\draw[dotted] (0,0)--(0,5);
\foreach \i in {1,...,5}
\draw[dotted] (0,\i)--(\i,\i);
\foreach \i in {1,...,5}
\draw[dotted] (\i,5)--(\i,\i);
\draw[dotted] (0,0)--(5,5);
\draw[thick, color=dredcolor] (1.5,2.5)--(2.5, 2.5)--(2.5, 4.5);
\filldraw [red] (1.5,2.5) circle (5pt)
(2.5,4.5) circle (5pt);
\filldraw [blue] (0.5,3.5) circle (5pt);
\node[] at (1, -.5) {\phantom{sdf}};
\end{tikzpicture}
\qquad
\begin{tikzpicture}[scale=.5]
\fill[color=cyan!13!white](0,2)--(2,2)--(2,4)--(4,4)--(4,5)--(0,5)--(0,2)--cycle;
\draw[dotted] (0,0)--(0,5);
\foreach \i in {1,...,5}
\draw[dotted] (0,\i)--(\i,\i);
\foreach \i in {1,...,5}
\draw[dotted] (\i,5)--(\i,\i);
\draw[dotted] (0,0)--(5,5);
\draw[thick, color=dredcolor] (.5,3.5)--(3.5, 3.5)--(3.5, 4.5);
\filldraw [red] (.5,3.5) circle (5pt)
(3.5,4.5) circle (5pt);
\filldraw [blue] (1.5,2.5) circle (5pt);
\node[] at (1, -.5) {\phantom{sdf}};
\end{tikzpicture}\qquad
\begin{tikzpicture}[scale=.5]
\fill[color=cyan!13!white](0,2)--(2,2)--(2,4)--(4,4)--(4,5)--(0,5)--(0,2)--cycle;
\draw[dotted] (0,0)--(0,5);
\foreach \i in {1,...,5}
\draw[dotted] (0,\i)--(\i,\i);
\foreach \i in {1,...,5}
\draw[dotted] (\i,5)--(\i,\i);
\draw[dotted] (0,0)--(5,5);
\draw[thick, color=dredcolor] (1.5,3.5)--(3.5, 3.5)--(3.5, 4.5);
\filldraw [red] 
(1.5,3.5) circle (5pt)
(3.5,4.5) circle (5pt);
\filldraw [blue] (0.5, 2.5) circle (5pt);
\node[] at (1, -.5) {\phantom{sdf}};
\end{tikzpicture}
\end{center}

\begin{center}
\begin{tikzpicture}[scale=.5]
\fill[color=cyan!13!white](0,2)--(2,2)--(2,4)--(4,4)--(4,5)--(0,5)--(0,2)--cycle;
\draw[dotted] (0,0)--(0,5);
\foreach \i in {1,...,5}
\draw[dotted] (0,\i)--(\i,\i);
\foreach \i in {1,...,5}
\draw[dotted] (\i,5)--(\i,\i);
\draw[dotted] (0,0)--(5,5);
\draw[thick, color=dredcolor] (0.5, 0.5)--(0.5,2.5)--(2.5, 2.5)--(2.5, 4.5)--(4.5, 4.5)--(4.5, 5.5);
\draw[thick, color=blue] (1.5, 1.5)--(1.5, 3.5)--(3.5, 3.5)--(3.5, 5.5);
\filldraw [red] (0.5,2.5) circle (5pt)
(2.5,4.5) circle (5pt)
(.5, .5) circle (5pt)
(2.5, 2.5) circle (5pt)
(4.5, 4.5) circle (5pt)
(4.5, 5.5) circle (5pt);
\filldraw [blue] (1.5,3.5) circle (5pt)
(1.5,1.5) circle (5pt)
(3.5,3.5) circle (5pt)
(3.5, 5.5) circle (5pt);
\node[] at (3.5, 1) {\(1\)};
\end{tikzpicture}
\qquad 
\begin{tikzpicture}[scale=.5]
\fill[color=cyan!13!white](0,2)--(2,2)--(2,4)--(4,4)--(4,5)--(0,5)--(0,2)--cycle;
\draw[dotted] (0,0)--(0,5);
\foreach \i in {1,...,5}
\draw[dotted] (0,\i)--(\i,\i);
\foreach \i in {1,...,5}
\draw[dotted] (\i,5)--(\i,\i);
\draw[dotted] (0,0)--(5,5);
\draw[thick, color=dredcolor] (1.5, 1.5)--(1.5,2.5)--(2.5, 2.5)--(2.5, 4.5)--(4.5, 4.5)--(4.5, 5.5);
\draw[thick, color=blue] (.5,.5)--(.5, 3.5)--(3.5, 3.5)--(3.5, 5.5);
\filldraw [red] (1.5,2.5) circle (5pt)
(2.5,4.5) circle (5pt)
(1.5, 1.5) circle (5pt)
(2.5, 2.5) circle (5pt)
(4.5, 4.5) circle (5pt)
(4.5, 5.5) circle (5pt);
\filldraw [blue] (0.5,3.5) circle (5pt)
(.5,.5) circle (5pt)
(3.5,3.5) circle (5pt)
(3.5, 5.5) circle (5pt);
\draw[very thick, color=orange, fill=white] (.5, 2.5) circle (6pt);
\node[] at (3.5, 1) {\(q\)};
\end{tikzpicture}
\qquad
\begin{tikzpicture}[scale=.5]
\fill[color=cyan!13!white](0,2)--(2,2)--(2,4)--(4,4)--(4,5)--(0,5)--(0,2)--cycle;
\draw[dotted] (0,0)--(0,5);
\foreach \i in {1,...,5}
\draw[dotted] (0,\i)--(\i,\i);
\foreach \i in {1,...,5}
\draw[dotted] (\i,5)--(\i,\i);
\draw[dotted] (0,0)--(5,5);
\draw[thick, color=dredcolor] (.5, .5)--(.5,3.5)--(3.5, 3.5)--(3.5, 4.5)--(4.5, 4.5)--(4.5, 5.5);
\draw[thick, color=blue] (1.5,1.5)--(1.5, 2.5)--(2.5, 2.5)--(2.5, 5.5);
\filldraw [red] (.5,3.5) circle (5pt)
(3.5,4.5) circle (5pt)
(.5, .5) circle (5pt)
(3.5, 3.5) circle (5pt)
(4.5, 4.5) circle (5pt)
(4.5, 5.5) circle (5pt);
\filldraw [blue] (1.5,2.5) circle (5pt)
(1.5,1.5) circle (5pt)
(2.5,2.5) circle (5pt)
(2.5, 5.5) circle (5pt);
\draw[very thick, color=orange, fill=white] (.5, 2.5) circle (6pt)
(2.5, 4.5) circle (6pt);
\node[] at (3.5, 1) {\(q^2\)};
\end{tikzpicture}\qquad
\begin{tikzpicture}[scale=.5]
\fill[color=cyan!13!white](0,2)--(2,2)--(2,4)--(4,4)--(4,5)--(0,5)--(0,2)--cycle;
\draw[dotted] (0,0)--(0,5);
\foreach \i in {1,...,5}
\draw[dotted] (0,\i)--(\i,\i);
\foreach \i in {1,...,5}
\draw[dotted] (\i,5)--(\i,\i);
\draw[dotted] (0,0)--(5,5);
\draw[thick, color=dredcolor] (1.5, 1.5)--(1.5,3.5)--(3.5, 3.5)--(3.5, 4.5)--(4.5, 4.5)--(4.5, 5.5);
\draw[thick, color=blue] (.5,.5)--(.5, 2.5)--(2.5, 2.5)--(2.5, 5.5);
\filldraw [red] 
(1.5,3.5) circle (5pt)
(3.5,4.5) circle (5pt)
(1.5, 1.5) circle (5pt)
(3.5, 3.5) circle (5pt)
(4.5, 4.5) circle (5pt)
(4.5, 5.5) circle (5pt);
\filldraw [blue] (0.5, 2.5) circle (5pt)
(.5,.5) circle (5pt)
(2.5,2.5) circle (5pt)
(2.5, 5.5) circle (5pt);
\draw[very thick, color=orange, fill=white] (2.5, 4.5) circle (6pt);
\node[] at (3, 1) {\(q\)};
\end{tikzpicture}
\end{center}
\caption{Linked rook placements of type \((3,2)\) and their extended
  linked rook placements. Free cells are denoted by white
  bullets.}\label{fig:type32}
\end{figure}
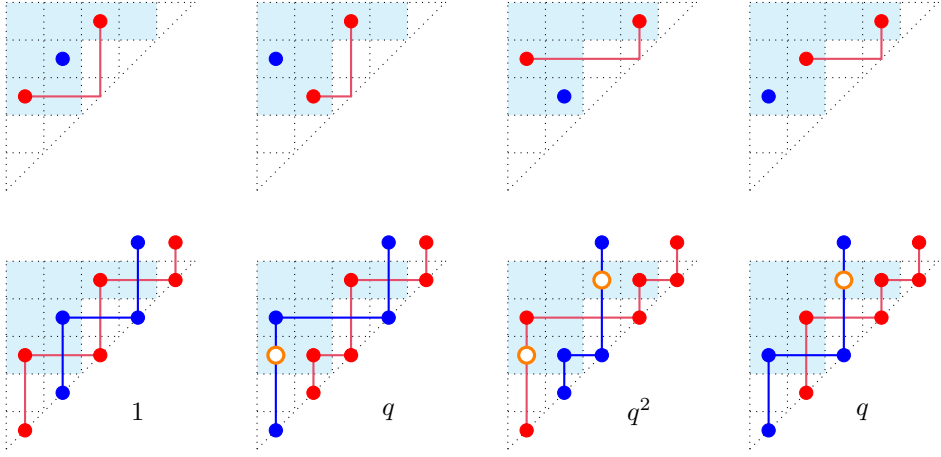
\end{exam}

By applying the Carlsson--Mellit relation \eqref{eqn:CM} to
\Cref{thm:HLexp}, we also obtain the following Hall--Littlewood
expansion of unicellular LLT polynomials.

\begin{thm}\label{thm:HLexp_LLT}
  For a Dyck path \(\gamma\in \D_n\), we have
  \[
      \LLT_\gamma (\vx;q)
=(1-q)^{n-\ell(\mu)}\sum_{\mu\vdash n}q^{\area(\gamma)-n(\mu)}r_{\gamma, \mu}(q)\, \omega (H_\mu (\vx;q))
  \]
  and
\[
      \LLT_\gamma (\vx;q)
  =(1-q^{-1})^{n-\ell (\mu)}\sum_{\mu\vdash n} r_{\gamma, \mu}(q^{-1})  \tilde{H}_\mu (\vx;q).
\]
\end{thm}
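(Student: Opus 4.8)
The plan is to derive both identities from \Cref{thm:HLexp} by inverting the Carlsson--Mellit relation \eqref{eqn:CM} plethystically and then performing a short plethystic computation on each Hall--Littlewood polynomial $P_\mu$. Since every symmetric function in sight is homogeneous of degree $n$, the operator $f\mapsto f[(q-1)X]$ acts on power sums by $p_k\mapsto(q^k-1)p_k$ and is therefore invertible over $\mathbb{Q}(q)$, its inverse being the substitution $p_k\mapsto p_k/(q^k-1)$, which I write as $f\mapsto f[X/(q-1)]$. Applying this inverse to \eqref{eqn:CM} gives
\[
\LLT_\gamma(\vx;q)=(q-1)^n\,X_\gamma[X/(q-1);q].
\]
Substituting the expansion of \Cref{thm:HLexp} and pulling the scalar coefficients $q^{\area(\gamma)-n(\mu)}r_{\gamma,\mu}(q)\prod_{i}[m_i(\mu)]_q!\in\mathbb{Q}(q)$ out of the plethysm reduces everything to evaluating $P_\mu[X/(q-1);q]$.

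The key computation is the following. By the definition of $H_\mu$ together with \eqref{eqn:Jmu} we have $\prod_{i}(q;q)_{m_i(\mu)}\,P_\mu(\vx;q)=H_\mu[X(1-q);q]$. Composing the substitution $X\mapsto X(1-q)$, i.e.\ $p_k\mapsto(1-q^k)p_k$, with $X\mapsto X/(q-1)$, i.e.\ $p_k\mapsto p_k/(q^k-1)$, sends $p_k\mapsto -p_k$, which is precisely the substitution $X\mapsto -X$. Hence
\[
\prod_{i}(q;q)_{m_i(\mu)}\,P_\mu[X/(q-1);q]=H_\mu[-X;q]=(-1)^n\,\omega\!\left(H_\mu(\vx;q)\right),
\]
the last equality being \eqref{eq:28} for the degree-$n$ function $H_\mu$. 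Using $[m]_q!=(q;q)_m/(1-q)^m$ and $\sum_i m_i(\mu)=\ell(\mu)$, the ratio $\prod_i[m_i(\mu)]_q!\big/\prod_i(q;q)_{m_i(\mu)}$ equals $(1-q)^{-\ell(\mu)}$, while the signs combine as $(q-1)^n(-1)^n=(1-q)^n$. Collecting these factors yields the first identity, with $(1-q)^{n-\ell(\mu)}$ emerging inside the summation (as it must, since $\ell(\mu)$ depends on $\mu$).

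For the second identity I would pass from the first by invoking the reciprocity
\[
\LLT_\gamma(\vx;q)=q^{\area(\gamma)}\,\omega\LLT_\gamma(\vx;q^{-1}),
\]
the standard palindromicity/transpose symmetry of unicellular LLT polynomials, which can itself be deduced from the palindromicity of $X_\gamma$ in \cite{SW} via \Cref{prop:CM}. Replacing $q$ by $q^{-1}$ in the first identity, applying $\omega$, multiplying through by $q^{\area(\gamma)}$, and using $\tilde H_\mu(\vx;q)=q^{n(\mu)}H_\mu(\vx;q^{-1})$, the exponent $q^{-\area(\gamma)+n(\mu)}$ collapses to $q^{n(\mu)}$ after the $q^{\area(\gamma)}$ factor cancels the $q^{-\area(\gamma)}$, turning $q^{n(\mu)}H_\mu(\vx;q^{-1})$ into $\tilde H_\mu(\vx;q)$ and leaving $r_{\gamma,\mu}(q^{-1})$ and $(1-q^{-1})^{n-\ell(\mu)}$; this gives exactly the second identity.

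The scalar bookkeeping (the $[m]_q!$-to-$(q;q)_m$ conversion and the sign count) is routine. The genuinely delicate point is the plethystic inversion and composition in the first two steps, in particular interpreting $X/(q-1)$ as the plethystic inverse $p_k\mapsto p_k/(q^k-1)$ rather than as an honest scalar multiple of the alphabet, since the latter reading would collapse \eqref{eqn:CM} to the false equality $X_\gamma=\LLT_\gamma$. The remaining point requiring care, or an explicit external reference, is the LLT reciprocity used to obtain the second identity from the first.
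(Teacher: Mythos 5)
Your proof is correct. For the first identity it is essentially the paper's own proof: both invert \eqref{eqn:CM} to get $\LLT_\gamma(\vx;q)=(q-1)^n X_\gamma\left[\frac{X}{q-1};q\right]$, substitute the expansion of \Cref{thm:HLexp}, and use \eqref{eqn:Jmu} together with \eqref{eq:28} to evaluate $\prod_{i\ge1}(q;q)_{m_i(\mu)}P_\mu\left[\frac{X}{q-1};q\right]=H_\mu[-X;q]=(-1)^n\omega(H_\mu(\vx;q))$; your scalar bookkeeping matches the paper's, and you are right that the factor $(1-q)^{n-\ell(\mu)}$ properly belongs inside the sum. For the second identity you take a different, though ultimately equivalent, route. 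The paper inserts the Shareshian--Wachs palindromicity $X_\gamma(\vx;q)=q^{\area(\gamma)}X_\gamma(\vx;q^{-1})$ directly into the plethystic relation, writing $\LLT_\gamma(\vx;q)=(q-1)^nq^{\area(\gamma)}X_\gamma\left[\frac{X}{q-1};q^{-1}\right]$, and then repeats the $H_\mu$-evaluation at $q^{-1}$ via $q^{n(\mu)}\prod_{i\ge1}(q^{-1};q^{-1})_{m_i(\mu)}P_\mu\left[\frac{X}{1-q^{-1}};q^{-1}\right]=\tilde{H}_\mu(\vx;q)$. You instead lift the palindromicity to the LLT level, $\LLT_\gamma(\vx;q)=q^{\area(\gamma)}\omega\LLT_\gamma(\vx;q^{-1})$, and deduce the second identity from the first by applying $q\mapsto q^{-1}$, then $\omega$, then multiplication by $q^{\area(\gamma)}$ --- a purely formal manipulation that does go through. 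This buys you one fewer plethystic expansion, but at the cost of the LLT reciprocity becoming a genuine lemma that you cite rather than prove; it is not free, since its deduction from \Cref{prop:CM} requires tracking how $\omega$ interacts with the substitution $p_k\mapsto p_k/(q^{-k}-1)$: one finds $(-1)^{k-1}/(q^{-k}-1)=(-q)^k/(q^k-1)$, so the accumulated factor $(-q)^n$ is exactly absorbed by $(q^{-1}-1)^n=(-1)^nq^{-n}(q-1)^n$, giving $\omega\LLT_\gamma(\vx;q^{-1})=(q-1)^nX_\gamma\left[\frac{X}{q-1};q^{-1}\right]$, after which palindromicity of $X_\gamma$ finishes. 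That hidden computation is in effect the very one the paper performs, so the two proofs have the same mathematical content, organized differently; yours is arguably cleaner to read once the reciprocity lemma is granted, while the paper's is self-contained.
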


\begin{proof}
By \eqref{eqn:CM}, we have
\begin{align*}
  \LLT_\gamma (\vx;q)  &= (q-1)^n X_\gamma \left[  \frac{X}{q-1};q  \right] \\
                         &= (-1)^n (1-q)^{n-\ell(\mu)}\sum_{\mu\vdash n}q^{\area(\gamma)-n(\mu)}r_{\gamma, \mu}(q)
  \prod_{i\ge 1}(q;q)_{m_i (\mu)}P_\mu \left[  \frac{X}{q-1};q \right].
\end{align*}
By \eqref{eqn:Jmu},
\[
  \prod_{i\ge 1}(q;q)_{m_i (\mu)}P_\mu \left[  \frac{X}{q-1};q \right]
  =H_\mu \left[  \frac{X(1-q)}{q-1};q \right]=(-1)^n \omega (H_\mu (\vx;q)).
\]
Thus we get the first equality in the theorem.

On the other hand, by applying the relation
\( X_\gamma (\vx;q)=q^{\area(\gamma)}X_\gamma(\vx;q^{-1})
\) to \eqref{eqn:CM}, we get
\begin{align*}
  \LLT_\gamma (\vx;q)  &= (q-1)^n q^{\area(\gamma)} X_\gamma \left[ \frac{X}{q-1} ;q^{-1}\right]  \\
                           &=(1-q^{-1})^{n-\ell(\mu)}\sum_{\mu\vdash n}q^{n(\mu)}r_{\gamma,\mu}(q^{-1})\prod_{i\ge 1}(q^{-1};q^{-1})_{m_i (\mu)}P_\mu \left[ \frac{X}{1-q^{-1}}; q^{-1} \right].
\end{align*}
Since
\[
 q^{n(\mu)}\prod_{i\ge 1}(q^{-1};q^{-1})_{m_i (\mu)}P_\mu \left[ \frac{X}{1-q^{-1}}; q^{-1} \right] =
  q^{n(\mu)}H_\mu (\vx;q^{-1})=\tilde{H}_\mu (\vx;q),
\]
we obtain the second equality.
\end{proof}

\section{Proof of the main theorem}\label{sec:proof}

In this section, we prove our main result, \Cref{thm:HLexp}, which states that
\( X_{\gamma}(\vx;q) = Y_{\gamma}(\vx;q) \), where
\begin{equation}\label{eq:Y}
  Y_{\gamma}(\vx;q) := \sum_{\mu\vdash n} q^{\area(\gamma) -n(\mu)} r_{\gamma,\mu}(q)
 P_{\mu}(\vx;q)  \prod_{i\ge1 }[m_i (\mu)]_q!.
\end{equation}

For two Dyck paths \( \gamma_1 \) and \( \gamma_2 \), we denote by
\( \gamma_1+\gamma_2 \) the Dyck path obtained from \( \gamma_1 \) by
attaching \( \gamma_2 \) at the end. Let \( N^kE^k \) be the Dyck path
consisting of \( k \) consecutive north steps followed by \( k \)
consecutive east steps.

By \Cref{thm:AN2}, to establish \Cref{thm:HLexp}, it suffices to prove
the following three propositions.

\begin{prop}[Modular law]\label{pro:1}
  Let \( n\ge1 \), \( i\in [n-1] \), and \( t\in \{1,2\} \), and let
  \( (\gamma_0,\gamma_1,\gamma_2)\in \MM_n^{t,i} \). Then
\begin{equation}\label{eq:4}
    (1+q) Y_{\gamma_1}(\vx; q) = q Y_{\gamma_0}(\vx; q) + Y_{\gamma_2}(\vx; q).
  \end{equation}
\end{prop}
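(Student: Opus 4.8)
The plan is to prove the modular law by establishing a finer, placement-level identity that matches the combinatorics of linked rook placements against the modular relation. Fixing $(\gamma_0,\gamma_1,\gamma_2)\in\MM_n^{t,i}$, I note from Definition~\ref{def:modular_law} that $\area(\gamma_0)=\area(\gamma_1)-1$ and $\area(\gamma_2)=\area(\gamma_1)+1$, since the three Dyck paths differ only in the height of a single column (type $(1,i)$) or a pair of adjacent columns (type $(2,i)$), with $\gamma_0\subseteq\gamma_1\subseteq\gamma_2$. Because the factor $P_\mu(\vx;q)\prod_{i\ge1}[m_i(\mu)]_q!$ in \eqref{eq:Y} does not depend on $\gamma$, it suffices to prove the scalar identity
\begin{equation*}
  (1+q)\,q^{\area(\gamma_1)-n(\mu)}r_{\gamma_1,\mu}(q)
  = q\cdot q^{\area(\gamma_0)-n(\mu)}r_{\gamma_0,\mu}(q)
  + q^{\area(\gamma_2)-n(\mu)}r_{\gamma_2,\mu}(q)
\end{equation*}
for every $\mu\vdash n$. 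Pulling out the common power $q^{\area(\gamma_1)-n(\mu)}$ and using the area relations, this reduces to the clean recurrence
\begin{equation*}
  (1+q)\,r_{\gamma_1,\mu}(q) = r_{\gamma_0,\mu}(q) + r_{\gamma_2,\mu}(q),
\end{equation*}
which is what I will actually establish.

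To prove this recurrence I would analyze how linked rook placements of type $\mu$ interact with the cell(s) where the three boards differ. The key observation is that $\LRP(\gamma_0,\mu)$ sits inside $\LRP(\gamma_1,\mu)$, which sits inside $\LRP(\gamma_2,\mu)$, since enlarging the board only adds available cells. So I would partition $\LRP(\gamma_2,\mu)$ into placements that already lie in $\LRP(\gamma_0,\mu)$ (i.e.\ use none of the extra cells) and those that genuinely use a cell unavailable in $\gamma_0$. For the first group, I would show the $\fc_\gamma$ statistic is unaffected by the board change, so these contribute equally to all three $r$'s and cancel appropriately in the recurrence. The heart of the argument is a weight-preserving involution (or a $(1+q)$-weighted bijection) on the remaining placements: for type $(1,i)$ the two new cells in column $i$ differ by a single row, and a rook occupying the upper versus lower of these two cells should produce placements whose $\fc_\gamma$-contributions differ by exactly one, thereby accounting for the factors $1$ and $q$ on the left. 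The conditions $\gamma_1(\gamma_1(i))=\gamma_1(\gamma_1(i)+1)$ (type $(1,i)$) and $\gamma_1^{-1}(\{i\})=\emptyset$ (type $(2,i)$) are precisely the technical hypotheses that guarantee this local swap does not disturb linkedness or attacking relations elsewhere, and I would use them to verify the involution is well-defined.

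The main obstacle, and where the bulk of the work lies, is controlling the free-cell statistic $\fc_\gamma$ under these local modifications. Because $\fc_\gamma(P)$ is defined in Definition~\ref{def:fcpair} through ranks of topmost and leftmost extended rooks along entire columns and rows, moving or toggling a single rook near column $i$ can in principle change the fc-status of several other cells $(i',j')$ through the extended linked rook structure. I expect the careful part to be a case analysis showing that, under the modular-triple hypotheses, the only fc-pair whose status changes is the one directly associated with the modified cell, so that the net effect on $q^{\fc_\gamma}$ is a clean factor of $q$ exactly when a rook moves from the lower to the upper disputed cell. I would organize this by tracking, for each placement in $\LRP(\gamma_2,\mu)\setminus\LRP(\gamma_0,\mu)$, the rank data in the two affected columns/rows and checking the free-cell inequalities of Definition~\ref{def:fcpair} before and after, handling types $(1,i)$ and $(2,i)$ separately since their geometry (single column versus two adjacent columns, shaded cells in Figure~\ref{fig:hs}) differs.
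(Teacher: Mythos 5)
Your opening reduction is the right first move, but you carried out the algebra incorrectly, and the ``clean recurrence'' you propose to prove is false. In \eqref{eq:4} the factor \( q \) sits on \( Y_{\gamma_0} \), and the area relations are \( \area(\gamma_0)=\area(\gamma_1)-1 \), \( \area(\gamma_2)=\area(\gamma_1)+1 \). Dividing through by \( q^{\area(\gamma_1)-n(\mu)} \) absorbs the \( q \) in front of the \( \gamma_0 \)-term, but it simultaneously \emph{creates} a factor \( q \) on the \( \gamma_2 \)-term, since \( q^{\area(\gamma_2)-n(\mu)} = q\cdot q^{\area(\gamma_1)-n(\mu)} \). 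The correct placement-level identity is therefore
\[
  (1+q)\,r_{\gamma_1,\mu}(q) \;=\; r_{\gamma_0,\mu}(q) \;+\; q\, r_{\gamma_2,\mu}(q),
\]
which is exactly \eqref{eq:3} in the paper, not \( (1+q)r_{\gamma_1,\mu}(q)=r_{\gamma_0,\mu}(q)+r_{\gamma_2,\mu}(q) \). Your version already fails in the smallest example: take the type-\( (2,1) \) modular triple in \( \D_3 \) with column heights \( \gamma_0=(2,2,3) \), \( \gamma_1=(2,3,3) \), \( \gamma_2=(3,3,3) \), and \( \mu=(1^3) \). The only placement of type \( (1^3) \) is the empty one, and its unoccupied cells above the path are all free, giving \( r_{\gamma_0,\mu}(q)=q^2 \), \( r_{\gamma_1,\mu}(q)=q \), \( r_{\gamma_2,\mu}(q)=1 \). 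Then \( (1+q)q=q^2+q\cdot 1 \) holds, but \( (1+q)q\neq q^2+1 \). So the recurrence you set out to establish is not true, and no amount of combinatorial work can prove it.

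Two further claims in your plan are also wrong and would derail the argument even after fixing the algebra. First, the inclusions run the other way: raising the Dyck path shrinks the rook board (which lies \emph{above} the path), so \( \LRP(\gamma_2,\mu)\subseteq\LRP(\gamma_1,\mu)\subseteq\LRP(\gamma_0,\mu) \); the largest set is \( \LRP(\gamma_0,\mu) \), and it is that set which must be partitioned according to occupancy of the disputed cells. Second, your assertion that for placements avoiding the disputed cells ``the \( \fc_\gamma \) statistic is unaffected by the board change, so these contribute equally to all three \( r \)'s'' is false: whether the disputed cells themselves are free depends on which board you are on, so such placements contribute different powers of \( q \) to the three sums (in the example above, the empty placement has \( \fc_{\gamma_0}=2 \), \( \fc_{\gamma_1}=1 \), \( \fc_{\gamma_2}=0 \)). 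In the paper's proof, handling precisely these placements is the hardest part: it requires an involution on the common set of such placements (swapping the two relevant rows and columns when the rank data interlace, e.g.\ \( b_1\le a<b_2 \) or \( b_2\le a<b_1 \)) together with a four-way case analysis of ranks, while the placements that do occupy a disputed cell are treated by a separate rank-comparing bijection. So the portion of the work you expected to cancel trivially is actually the crux, and the local rook-toggle you flag as the heart of the matter is only half of the story.
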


\begin{prop}[Multiplicativity]\label{prop:multiplicativity}
  Let \( \gamma\in\D_n \) and \( \tilde{\gamma}=\gamma+N^kE^k \). Then we have
\[
  Y_{\tilde{\gamma}}(\vx;q) = Y_{\gamma}(\vx;q) Y_{N^kE^k}(\vx;q).
\]
\end{prop}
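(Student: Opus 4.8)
The plan is to exploit the block structure of $\tilde{\gamma}=\gamma+N^kE^k$. Reading off the column heights, the natural unit interval order attached to $\tilde{\gamma}$ is the ordinal sum $P(\gamma)\oplus A_k$, where $A_k=\{n+1,\dots,n+k\}$ is an antichain placed entirely above $P(\gamma)$; equivalently $\mathrm{inc}(P(\tilde{\gamma}))=\mathrm{inc}(P(\gamma))\sqcup K_k$. Consequently the cells above $\tilde{\gamma}$ are the disjoint union of the cells above $\gamma$ and the full $n\times k$ block of cells $(i,j)$ with $i\le n<j$, and $\area(\tilde{\gamma})=\area(\gamma)+\binom{k}{2}$. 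First I would record that a linked rook placement on $\tilde{\gamma}$ is the same as a partition of $P(\tilde{\gamma})$ into chains, and that every such chain is a chain of $P(\gamma)$ topped by at most one element of $A_k$. Thus $\LRP(\tilde{\gamma})$ is in bijection with pairs $(P,\sigma)$, where $P\in\LRP(\gamma)$ and $\sigma$ attaches each element of $A_k$ either on top of a distinct chain of $P$ (a rook in the $n\times k$ block) or as its own singleton chain. If $P$ has type $\alpha$, then the resulting type $\mu$ is obtained from $\alpha$ by adding a \emph{vertical} $k$-strip: promoting a chain raises a part by one, and each new singleton adds a part equal to $1$.

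Next I would analyze the free-cell statistic under this bijection. The only effect of appending $N^kE^k$ is to push the extended row upward, which preserves all ranks; hence the quantities entering the fc-condition for a cell with both coordinates at most $n$ are the same computed in $\gamma$ or in $\tilde{\gamma}$, giving $\fc_{\tilde{\gamma}}(\tilde{P})=\fc_{\gamma}(P)+\beta(P,\sigma)$, where $\tilde{P}$ is the placement encoded by $(P,\sigma)$ and $\beta$ counts free cells in the $n\times k$ block. I would then compute $\beta$ explicitly: attaching an element of $A_k$ on top of a chain with top $t$ and $\gamma$-length $s$ renders a block cell in column $i$ free according to the position of $i$ in its chain relative to $s$ and to the comparison of $i$ with $t$. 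Although $\beta(P,\sigma)$ genuinely depends on the fine structure of $P$ (already for $\mathrm{inc}(P(\gamma))=K_2$, two placements of the same type carry $\fc=0$ and $\fc=1$), the claim is that summing $q^{\beta(P,\sigma)}$ over all $\sigma$ producing a fixed $\mu$ collapses to type-data only: the choice of which equal-length chains to promote contributes an inversion statistic whose subset generating function is a Gaussian binomial $\qbinom{m_s(\alpha)}{j_s}$ independent of the labels, and the new singletons supply the remaining $q$-powers. Establishing this collapse is the key lemma and the main obstacle.

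Finally, I would assemble the pieces. Computing directly, $Y_{N^kE^k}=[k]_q!\,e_k$ (the only type is $(1^k)$, with $\fc=0$ and $\area=n((1^k))=\binom{k}{2}$), and since $e_k=P_{(1^k)}$ the target reduces to $Y_{\tilde{\gamma}}=[k]_q!\,e_k\,Y_{\gamma}$. Expanding $Y_{\gamma}=\sum_{\alpha}c_{\alpha}P_{\alpha}$ and applying the Hall--Littlewood Pieri rule $e_kP_{\alpha}=\sum_{\mu}\psi'_{\mu/\alpha}(q)P_{\mu}$ over vertical $k$-strips, matching the coefficient of each $P_{\mu}$ reduces to checking that the prefactor $q^{\binom{k}{2}-(n(\mu)-n(\alpha))}$, the ratio $\prod_i[m_i(\mu)]_q!/\prod_i[m_i(\alpha)]_q!$, and the collapsed block sum from the previous step combine to exactly $[k]_q!\,\psi'_{\mu/\alpha}(q)$; I would verify this against the explicit formula for $\psi'$ in \cite{Mac}. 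As a safeguard that localizes the difficulty, I note the alternative of setting $D_{\gamma}:=Y_{\gamma+N^kE^k}-Y_{\gamma}Y_{N^kE^k}$: since appending $N^kE^k$ sends a modular triple in $\MM_n^{t,i}$ to one in $\MM_{n+k}^{t,i}$ and multiplication by the fixed element $Y_{N^kE^k}$ is linear, \Cref{pro:1} shows that $D$ satisfies the modular law, so by \Cref{thm:AN2} it suffices to prove $D=0$ on disjoint unions of complete graphs, where $P(\tilde{\gamma})$ is an ordinal sum of antichains and the chain combinatorics is completely explicit.
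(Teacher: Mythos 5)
Your overall strategy coincides with the paper's: restrict a linked rook placement on \(\tilde\gamma\) to \([0,n]\times[0,n]\), record how the \(k\) new top elements either extend distinct old chains or become new singletons, note that the type changes by a vertical \(k\)-strip and that free cells in rows \(1,\dots,n\) are unaffected (your observation that all relevant ranks are preserved is correct), compute \(Y_{N^kE^k}=[k]_q!\,e_k\), and match coefficients of \(P_\mu\) via the Hall--Littlewood Pieri rule. All of these structural claims are accurate, and your reduction of the proposition to the statement that the block contribution collapses to type data is exactly the paper's reduction to its key identity (\Cref{pro:key}).

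The genuine gap is the one you flag yourself: the collapse lemma is asserted, not proved, and it is the entire combinatorial content of the proposition. Concretely, one must show that for every \(P\) of type \(\alpha\),
\(\sum_{\sigma} q^{\beta(P,\sigma)}\), summed over attachments \(\sigma\) producing type \(\mu\), is independent of \(P\) and equals
\(q^{\,n(\mu)-n(\alpha)-\binom{k}{2}}\,\frac{[k]_q!}{[\mu_1'-\alpha_1']_q!}\prod_{i\ge1}\qbinom{m_i(\alpha)}{\mu_{i+1}'-\alpha_{i+1}'}\).
The paper proves this by encoding \(\sigma\) as a word \(w\in W(\nu,\mu)\) and splitting the block free cells into two classes: those whose fc-pair reaches the extended row, counted by the subset statistic \(f_\mu(\set(w))\) (this yields the \(q\)-binomials \emph{together with} a fixed cross-size power of \(q\), which a separate algebraic lemma identifies as \(n(\nu)-n(\mu)-\binom{k}{2}\)), and those whose fc-pair does not, counted by the ordering statistic \(f(w)\) (this yields \([k]_q!/[\nu_1'-\mu_1']_q!\)); see \Cref{lem:3,lem:4}. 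Your sketch attributes the ``remaining \(q\)-powers'' to the new singletons, which conflates these two distinct contributions, so even the shape of the collapse is not quite pinned down. Your fallback is a sound reduction --- appending \(N^kE^k\) does carry \(\MM_n^{t,i}\) into \(\MM_{n+k}^{t,i}\), so \(D_\gamma:=Y_{\gamma+N^kE^k}-Y_\gamma Y_{N^kE^k}\) satisfies the modular law and \Cref{thm:AN2} applies --- but it does not eliminate the difficulty: the required vanishing of \(D\) on disjoint unions of complete graphs is itself an instance of the multiplicativity being proved, and computing \(r_{\delta,\mu}(q)\) with the free-cell statistic on ordinal sums of antichains demands essentially the same analysis that was left open.
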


\begin{prop}[Complete graph case]\label{pro:5}
We have
 \[
   Y_{N^nE^n}(\vx;q)=[n]_q!e_n(\vx).
 \]
\end{prop}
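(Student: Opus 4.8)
The plan is to evaluate $Y_{N^nE^n}$ directly from its definition in \eqref{eq:Y}, exploiting the fact that the board attached to $N^nE^n$ is degenerate. First I would record the geometry of $\gamma = N^nE^n$: its column height sequence is $\vm=(n,n,\dots,n)$, so the associated natural unit interval order $P(\vm)$ is an antichain and $\mathrm{inc}(P(\vm))=K_n$. Consequently the partition $\lambda\subseteq\delta_n$ lying \emph{above} $\gamma$, where rooks are placed, is empty, while the region below $\gamma$ and above the diagonal is the full staircase, giving $\area(\gamma)=\binom{n}{2}$. The crucial observation is simply that $\lambda=\emptyset$ leaves no cell in which to place a rook.

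Next I would determine $\LRP(N^nE^n,\mu)$ and the refined counts $r_{\gamma,\mu}(q)$. Since no rook can be placed, the only linked rook placement is the empty one $P_0$; in its extended linked rook placement $\ext(P_0)$ every diagonal cell forms its own extended linked rook of length $1$ (with empty underlying linked rook), so $P_0$ has type $(1^n)$. Hence $\LRP(\gamma,\mu)=\emptyset$ for $\mu\ne(1^n)$ and $\LRP(\gamma,(1^n))=\{P_0\}$. Moreover, \Cref{def:fcpair} counts free cells only among cells lying above $\gamma$, of which there are none, so $\fc_\gamma(P_0)=0$. Therefore $r_{\gamma,(1^n)}(q)=1$ and $r_{\gamma,\mu}(q)=0$ for all $\mu\ne(1^n)$.

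Plugging this into \eqref{eq:Y}, only the $\mu=(1^n)$ summand survives, and I would then read off its three factors. We have $n((1^n))=\sum_{i=1}^n(i-1)=\binom{n}{2}$, so the power of $q$ is $q^{\area(\gamma)-n((1^n))}=q^{0}=1$; the product $\prod_{i\ge1}[m_i((1^n))]_q!=[n]_q!$ since $m_1((1^n))=n$; and $P_{(1^n)}(\vx;q)=e_n(\vx)$, the standard Hall--Littlewood specialization. This last identity also follows from the explicit formula \eqref{eq:29} together with the classical evaluation $\sum_{w\in S_n}w\bigl(\prod_{1\le i<j\le n}\frac{x_i-qx_j}{x_i-x_j}\bigr)=[n]_q!$, which turns the right-hand side of \eqref{eq:29} for $\mu=(1^n)$ into $x_1\cdots x_n$. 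Combining the three factors yields $Y_{N^nE^n}(\vx;q)=[n]_q!\,e_n(\vx)$, as claimed.

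I do not expect a genuine obstacle here, since the statement is essentially a boundary computation. The only points requiring care are the orientation conventions---one must be certain that rooks live in the region \emph{above} $\gamma$, which is empty for the complete graph, rather than in the area region below it---and the identity $P_{(1^n)}=e_n$, which I would either cite from the standard theory of Hall--Littlewood polynomials or verify on the spot via the one-line symmetrization above.
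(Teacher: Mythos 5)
Your proof is correct and takes essentially the same route as the paper: both arguments observe that $\LRP(N^nE^n,\mu)=\emptyset$ unless $\mu=(1^n)$ (the region above the path being empty), that the unique empty placement gives $r_{\gamma,(1^n)}(q)=1$ with $\fc_\gamma=0$, that $\area(\gamma)=n((1^n))=\binom{n}{2}$ kills the power of $q$, and that $P_{(1^n)}(\vx;q)=e_n(\vx)$. The paper simply cites Macdonald for this last identity, exactly as your fallback suggests, so your additional symmetrization verification is fine but not needed.
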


We first prove \Cref{pro:5}.

\begin{proof}[Proof of \Cref{pro:5}]
  Observe that \( \LRP(\gamma,\mu)=\emptyset \) unless
  \( \mu=(1^n) \). If \( \mu=(1^n) \), then
  \( n(\mu) = \binom{n}{2} = \area(\gamma) \) so \( \area(\gamma)-n(\mu)=0 \) and
  \( r_{\gamma,\mu}(q) = 1 \). We also have
  \( P_{(1^n)}(\vx;q) = e_{n}(\vx) \); see \cite[(2.8),
  p.~209]{Mac}. Thus we obtain the result.
\end{proof}

We prove \Cref{pro:1} and \Cref{prop:multiplicativity} in
\Cref{sec:proof-modular-law} and \Cref{sec:proof-mult}, respectively.

\subsection{Proof of the modular law}
\label{sec:proof-modular-law}

In this subsection, we prove \Cref{pro:1}, which states that the
function \( Y_{\gamma}(\vx; q) \) defined in \eqref{eq:Y} satisfies
the modular law.

By \Cref{def:1}, we can rewrite \eqref{eq:Y} as
\begin{equation}\label{eq:5}
  Y_{\gamma}(\vx;q)= \sum_{\mu\vdash n}
  \left( q^{\area(\gamma)-n(\mu)} P_{\mu}(\vx;q)  \prod_{i\ge1 }[m_i (\mu)]_q! \right)
  \sum_{P\in\LRP(\gamma,\mu)} q^{\fc_{\gamma}(P)}.
\end{equation}
Since \( \area(\gamma_0)+1 = \area(\gamma_1) = \area(\gamma_2)-1 \),
the identity \eqref{eq:4} we need to show is equivalent to
\begin{equation}\label{eq:3}
  (1+q) \sum_{P\in\LRP(\gamma_1,\mu)} q^{\fc_{\gamma_1}(P)}
  = \sum_{P\in\LRP(\gamma_0,\mu)} q^{\fc_{\gamma_0}(P)}
  + q \sum_{P\in\LRP(\gamma_2,\mu)} q^{\fc_{\gamma_2}(P)}.
\end{equation}
The rest of this subsection is devoted to proving \eqref{eq:3}
under the conditions in \Cref{pro:1}.

\begin{defn}\label{def:2}
  For a linked rook placement \( P \), we define \( R_{i,i+1}(P) \) to
  be the linked rook placement obtained from \( P \) by exchanging row
  \( i \) and row \( i+1 \). We also define \( C_{i,i+1}(P) \) to be
  the linked rook placement obtained from \( P \) by exchanging column
  \( i \) and column \( i+1 \).
\end{defn}

\begin{defn}
  Let \( \gamma\in \D_n \) and \( P\in \LRP(\gamma) \). For
  \( i\in [n] \), we denote by \( r_i(P) \) the rank of the leftmost
  rook in row \( i \) of \( \ext(P) \), and denote by \( c_i(P) \) the
  rank of the topmost rook in column \( i \) of \( \ext(P) \).
\end{defn}

\begin{lem}\label{lem:ML-1}
  Let \( (\gamma_0,\gamma_1,\gamma_2)\in \MM_n^{2,i} \) and
  \( \mu\vdash n \). Then \eqref{eq:3} holds.
\end{lem}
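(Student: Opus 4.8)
The plan is to reduce the three-term identity \eqref{eq:3} to a purely combinatorial statement about linked rook placements and then prove it by an involution. First I would pin down the geometry of a modular triple of type $(2,i)$: writing $h=\gamma_1(i)$, the three boards are nested, $\LRP(\gamma_0,\mu)\subseteq\LRP(\gamma_1,\mu)\subseteq\LRP(\gamma_2,\mu)$, and they differ only in the two cells $B=(i+1,h+1)$ (present in $\gamma_1,\gamma_2$ but not $\gamma_0$) and $A=(i,h+1)$ (present only in $\gamma_2$), which lie in the same row $h+1$ and the adjacent columns $i,i+1$. In particular $\LRP(\gamma_1,\mu)\setminus\LRP(\gamma_0,\mu)$ is the set of placements with a rook at $B$, while $\LRP(\gamma_2,\mu)\setminus\LRP(\gamma_1,\mu)$ is the set of placements with a rook at $A$; since $A$ and $B$ share a row, no placement uses both.

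The crucial structural observation is that $\ext(P)$, the type of $P$, and the freeness (in the sense of \Cref{def:fcpair}) of any fixed cell lying above all three paths are independent of which board we regard $P$ as living on, because they are determined by the chain structure of $P$ together with the fixed diagonal and extended row, not by the boundary. Hence the free-cell counts differ only through whether $A$ and $B$ are counted among the above-path cells, giving the local relations
\[
  \fc_{\gamma_0}(P)=\fc_{\gamma_1}(P)+[B\text{ is free}],\qquad
  \fc_{\gamma_2}(P)=\fc_{\gamma_1}(P)-[A\text{ is free}]
\]
for every $P$ valid on the relevant boards. Substituting these into $(1+q)S_1-S_0-qS_2$, where $S_k=\sum_{P\in\LRP(\gamma_k,\mu)}q^{\fc_{\gamma_k}(P)}$, I would express the contribution of each $P\in\LRP(\gamma_2,\mu)$ as an explicit polynomial in $q$ depending only on whether $P$ has a rook at $A$, at $B$, or at neither, and on the indicators $[A\text{ free}]$ and $[B\text{ free}]$. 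A short computation shows that a placement with a rook at neither and with $A,B$ simultaneously free or simultaneously non-free contributes $0$, so all the content lives in the remaining placements.

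Next I would introduce the column-swap involution $\iota=C_{i,i+1}$ of \Cref{def:2} on $\LRP(\gamma_2,\mu)$, which is natural because $\gamma_2(i)=\gamma_2(i+1)=h+1$ and which interchanges a rook at $A$ with a rook at $B$, and prove \eqref{eq:3} by showing that the total contribution vanishes on each $\iota$-orbit. A fixed point of $\iota$ can carry no rook in columns $i$ or $i+1$ (two rooks in one row are forbidden), so by symmetry $A$ and $B$ are then both free or both non-free and the contribution is $0$; the genuine work is in the two-element orbits $\{P,\iota(P)\}$, which pair placements with a rook at $B$ to placements with a rook at $A$, and pair those rook-at-neither placements for which exactly one of $A,B$ is free with their images. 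For each such orbit I must determine precisely how $\fc_{\gamma_2}$ and the indicators $[A\text{ free}],[B\text{ free}]$ change under $\iota$ and verify that the assembled coefficients $(1+q),1,q$ cancel.

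The main obstacle is exactly this last bookkeeping. Unlike a symmetry of a square board, $\iota$ must be analyzed at the corner cell $(i,i+1)$, whose naive image under swapping columns $i$ and $i+1$ lands on the diagonal, and at the diagonal cells $(i,i),(i+1,i+1)$, whose extended chains can be joined through a rook at $(i,i+1)$. This is where the hypothesis $\gamma_1^{-1}(\{i\})=\emptyset$ is needed: the absence of any column of height $i$ is what I expect to make $\iota$ well defined and to force the ranks, and hence the fc-pairs, to transform uniformly across all occupancy cases. I would therefore devote most of the effort to a case analysis on the rooks in columns $i,i+1$ and in row $h+1$, checking that $\iota$ shifts $\fc_{\gamma_2}$ and the freeness indicators by exactly the amounts the orbit-wise cancellation requires, with the corner-cell configuration being the delicate case.
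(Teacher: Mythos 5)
Your overall strategy---decompose placements according to which of the two differing cells is occupied, observe that \( \ext(P) \), ranks, and hence the freeness of any fixed cell do not depend on the board, and then cancel the surviving contributions by an involution built from row/column swaps---is in essence the strategy of the paper's proof. But there are two concrete problems. First, your orientation is reversed: rooks live in cells \emph{above} the path, so the tallest path has the fewest available cells, giving \( \LRP(\gamma_2,\mu)\subseteq\LRP(\gamma_1,\mu)\subseteq\LRP(\gamma_0,\mu) \); with \( j=\gamma_1(i)+1 \), the cell \( (i,j) \) is available on \( \gamma_0,\gamma_1 \) only, and \( (i+1,j) \) on \( \gamma_0 \) only. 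Since \eqref{eq:3} is not symmetric in \( \gamma_0\leftrightarrow\gamma_2 \) (coefficients \( 1 \) versus \( q \)), this is not cosmetic. As written, your stated inclusions contradict your own local relations \( \fc_{\gamma_0}(P)=\fc_{\gamma_1}(P)+[B\text{ free}] \) and \( \fc_{\gamma_2}(P)=\fc_{\gamma_1}(P)-[A\text{ free}] \), which are the correct ones for the true orientation; the algebra downstream of them (vanishing exactly when the two freeness indicators agree) is right, but only after the setup is repaired.

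Second, and fatally for the proposal as written: the map \( \iota=C_{i,i+1} \) of \Cref{def:2} is not a well-defined involution on \( \LRP(\gamma,\mu) \), because a column swap alone destroys the linked structure and hence changes the type. Concretely, if \( P \) contains linked rooks at \( (x,i) \) and \( (i,r) \) and nothing in column \( i+1 \) or row \( i+1 \), then \( C_{i,i+1}(P) \) contains rooks at \( (x,i) \) and \( (i+1,r) \), which are no longer linked: one chain of two rooks (a part of size \( 3 \) in the type) becomes two chains of one rook each (two parts of size \( 2 \)), so \( C_{i,i+1}(P)\notin\LRP(\gamma,\mu) \) and your orbit decomposition never gets off the ground. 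The repair is precisely the technical heart of the paper's proof: generically one must use the conjugation \( R_{i,i+1}\circ C_{i,i+1} \), which relabels \( i\leftrightarrow i+1 \) and therefore preserves links and type (this is also where condition (3), \( \gamma_1^{-1}(\{i\})=\emptyset \), enters, both to keep swapped rooks above the path and to exclude a rook at the corner cell \( (i,i+1) \)), and one falls back on the single swap \( C_{i,i+1} \) only in the degenerate case where the two relevant ranks coincide, which is exactly when a single swap happens to preserve the type. The paper splits \eqref{eq:3} into \eqref{eq:1} and \eqref{eq:2}, builds a bijection \( \phi:B_1(\gamma_1)\to B_2(\gamma_0) \) and an involution \( \theta \) on \( B_0(\gamma_2) \) out of these two maps according to rank comparisons, and verifies the \( \fc \) bookkeeping case by case. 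That choice of map and the accompanying case analysis are not optional bookkeeping; they are the missing idea, and your proposal defers all of it.
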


\begin{proof}
  Let \( j=\gamma_1(i)+1 \). Note that
  \[
    \LRP(\gamma_2,\mu) \subseteq \LRP(\gamma_1,\mu) \subseteq \LRP(\gamma_0,\mu).
  \]
  For \( \gamma\in \D_n \), we define
\begin{align*}
  B_0(\gamma)
  &= \{P\in \LRP(\gamma,\mu): \mbox{\( P \) has a rook in neither \( (i,j) \) nor \( (i+1,j) \)} \},\\
  B_1(\gamma)
  &= \{P\in \LRP(\gamma,\mu): \mbox{\( P \) has a rook in \( (i,j) \)} \},\\
  B_2(\gamma)
  &= \{P\in \LRP(\gamma,\mu): \mbox{\( P \) has a rook in \( (i+1,j) \)} \}.
\end{align*}
Then
\begin{align}
  \label{eq:16}
 \LRP(\gamma_0,\mu) &= B_0(\gamma_0) \sqcup B_1(\gamma_0) \sqcup B_2(\gamma_0),\\
  \label{eq:17}
 \LRP(\gamma_1,\mu) &= B_0(\gamma_1) \sqcup B_1(\gamma_1), \\
  \label{eq:18}
 \LRP(\gamma_2,\mu) &= B_0(\gamma_2).
\end{align}

We claim that
\begin{align}
\label{eq:1}  (1+q) \sum_{P\in B_0(\gamma_1)} q^{\fc_{\gamma_1}(P)}
  &= \sum_{P\in B_0(\gamma_0)} q^{\fc_{\gamma_0}(P)}
  + q \sum_{P\in B_0(\gamma_2)} q^{\fc_{\gamma_2}(P)},\\
\label{eq:2}  (1+q) \sum_{P\in B_1(\gamma_1)} q^{\fc_{\gamma_1}(P)}
  &= \sum_{P\in B_1(\gamma_0)} q^{\fc_{\gamma_0}(P)}
  + \sum_{P\in B_2(\gamma_0)} q^{\fc_{\gamma_0}(P)}.
\end{align}
By \eqref{eq:16}, \eqref{eq:17}, and \eqref{eq:18}, adding
\eqref{eq:1} and \eqref{eq:2} yields \eqref{eq:3}. Therefore, it
suffices to prove the claim.

First, we prove \eqref{eq:2}. To do this, it suffices to construct a
bijection \( \phi:B_1(\gamma_1) \to B_2(\gamma_0) \) such that
\begin{equation}\label{eq:20}
  q^{\fc_{\gamma_0}(P)} + q^{\fc_{\gamma_0}(\phi(P))} = q^{\fc_{\gamma_1}(P)} + q^{\fc_{\gamma_1}(P)+1},
\end{equation}
because by the fact that \( B_1(\gamma_1) = B_1(\gamma_0) \), summing
\eqref{eq:20} over all \( P\in B_1(\gamma_1) \) gives \eqref{eq:2}.
Let \( a=r_j(P) \) and \( b=c_{i+1}(P) \). Then we define
\[
  \phi(P) =
  \begin{cases}
    (R_{i,i+1}\circ C_{i,i+1})(P) & \mbox{if \( a\ne b \)},\\
    C_{i,i+1}(P) & \mbox{if \( a= b \)}.
  \end{cases}
\]
By the construction, one can easily check that \( \phi(P) \) has the same type as \( P \).
There are three cases to consider:
\begin{itemize}
\item If \( a>b \), by \Cref{fig:a>b}, we have
  \( \fc_{\gamma_0}(P) = \fc_{\gamma_1}(P) \) and
  \( \fc_{\gamma_0}(\phi(P)) = \fc_{\gamma_1}(P)+1 \).
\item If \( a<b \), by \Cref{fig:image9}, we have
  \( \fc_{\gamma_0}(P) = \fc_{\gamma_1}(P)+1 \) and
  \( \fc_{\gamma_1}(\phi(P)) = \fc_{\gamma_1}(P) \).
\item If \( a=b \), by \Cref{fig:image10}, we have
  \( \fc_{\gamma_0}(P) = \fc_{\gamma_1}(P) \) and
  \( \fc_{\gamma_0}(\phi(P)) = \fc_{\gamma_1}(P)+1 \).
\end{itemize}
In each case, \eqref{eq:20} holds. Note that, in
\Cref{fig:a>b,fig:image9,fig:image10}, there may be columns between
the columns containing the rooks of ranks \( a-1 \) and \( b-1 \),
however, this does not affect the results. The map \( \phi \) is
clearly a bijection, and we obtain \eqref{eq:2}.

\begin{figure}
  \centering
    \begin{tikzpicture}[scale=.45]
\foreach \i in {3,...,9}
\draw[dotted] (0,\i)--(\i,\i);
\foreach \i in {2,...,9}
\draw[dotted] (\i,9)--(\i,\i);
\draw[dotted] (0,2)--(0,9)
(1,2)--(1,9);
\draw[dotted] (0,2)--(2,2)--(9,9);
\draw[very thick] (4,7)--(6,7)--(6,8)
(2,4)--(2,6);
\draw[thick] (1.5,4.5)--(4.5, 4.5)--(4.5, 7.5);
\draw[thick, color=blue] (.5, 5.5)--(5.5, 5.5)--(5.5, 8.5);
\filldraw (1.5, 4.5) circle (6pt)
(4.5, 7.5) circle (6pt);
\filldraw[color=blue]
(.5, 5.5) circle (6pt)
(5.5, 8.5) circle (6pt);
\draw[very thick, color=red] (.2, 4.2)--(.8, 4.8)
(.2, 4.8)--(.8, 4.2);
\draw[very thick, color=red] (5.2, 7.2)--(5.8, 7.8)
(5.2, 7.8)--(5.8, 7.2);
\node[] at (4.5, 8.3) {\small \( a \)};
\node[] at (5.5, 9.3) {\small \( b \)};
\node[] at (1.5, 3.6) {\small \( a-1 \)};
\node[] at (.6, 6.2) {\small \( b-1 \)};
\node[] at (5, 4.5) {\small \( i \)};
\node[] at (6.4, 5.3) {\small\( i+1 \)};
\node[] at (8, 7.5) {\small \( j \)};
\node[] at (3.5, 1.2) {\( \fc_{\gamma_0}(P) \)};
\end{tikzpicture}\quad
    \begin{tikzpicture}[scale=.45]
\foreach \i in {3,...,9}
\draw[dotted] (0,\i)--(\i,\i);
\foreach \i in {2,...,9}
\draw[dotted] (\i,9)--(\i,\i);
\draw[dotted] (0,2)--(0,9)
(1,2)--(1,9);
\draw[dotted] (0,2)--(2,2)--(9,9);
\draw[very thick] (4,7)--(5,7)--(5,8)--(6,8)
(2,4)--(2,6);
\draw[thick] (1.5,4.5)--(4.5, 4.5)--(4.5, 7.5);
\draw[thick, color=blue] (.5, 5.5)--(5.5, 5.5)--(5.5, 8.5);
\filldraw (1.5, 4.5) circle (6pt)
(4.5, 7.5) circle (6pt);
\filldraw[color=blue]
(.5, 5.5) circle (6pt)
(5.5, 8.5) circle (6pt);
\draw[very thick, color=red] (.2, 4.2)--(.8, 4.8)
(.2, 4.8)--(.8, 4.2);
\node[] at (4.5, 8.3) {\small \( a \)};
\node[] at (5.5, 9.3) {\small \( b \)};
\node[] at (1.5, 3.6) {\small \( a-1 \)};
\node[] at (.6, 6.2) {\small \( b-1 \)};
\node[] at (5, 4.5) {\small \( i \)};
\node[] at (6.4, 5.3) {\small\( i+1 \)};
\node[] at (8, 7.5) {\small \( j \)};
\node[] at (3.5, 1.2) {\( \fc_{\gamma_1}(P) \)};
\end{tikzpicture}\quad
    \begin{tikzpicture}[scale=.45]
\foreach \i in {3,...,9}
\draw[dotted] (0,\i)--(\i,\i);
\foreach \i in {2,...,9}
\draw[dotted] (\i,9)--(\i,\i);
\draw[dotted] (0,2)--(0,9)
(1,2)--(1,9);
\draw[dotted] (0,2)--(2,2)--(9,9);
\draw[very thick] (4,7)--(6,7)--(6,8)
(2,4)--(2,6);
\draw[thick] (1.5,5.5)--(5.5, 5.5)--(5.5, 7.5);
\draw[thick, color=blue] (.5, 4.5)--(4.5, 4.5)--(4.5, 8.5);
\filldraw (1.5, 5.5) circle (6pt)
(5.5, 7.5) circle (6pt);
\filldraw[color=blue]
(.5, 4.5) circle (6pt)
(4.5, 8.5) circle (6pt);
\draw [very thick, color=red, fill=white] (1.5, 4.5) circle (8pt);
\draw[very thick, color=red] (4.2, 7.2)--(4.8, 7.8)
(4.2, 7.8)--(4.8, 7.2);
\node[] at (4.5, 9.3) {\small \( b \)};
\node[] at (5.5, 8.3) {\small \( a \)};
\node[] at (0.5, 3.6) {\small \( b-1 \)};
\node[] at (1.6, 6.3) {\small \( a-1 \)};
\node[] at (5, 4.5) {\small \( i \)};
\node[] at (6.4, 5.3) {\small\( i+1 \)};
\node[] at (8, 7.5) {\small \( j \)};
\node[] at (3.5, 1.2) {\( \fc_{\gamma_0}(\phi(P)) \)};
\end{tikzpicture}\quad
\caption{If \( a>b \), then \( \fc_{\gamma_0}(P) = \fc_{\gamma_1}(P) = \fc_{\gamma_0}(\phi(P))-1 \).
  Circles indicate free cells.}
  \label{fig:a>b}
\end{figure}

\begin{figure}
  \centering
    \begin{tikzpicture}[scale=.45]
\foreach \i in {3,...,9}
\draw[dotted] (0,\i)--(\i,\i);
\foreach \i in {2,...,9}
\draw[dotted] (\i,9)--(\i,\i);
\draw[dotted] (0,2)--(0,9)
(1,2)--(1,9);
\draw[dotted] (0,2)--(2,2)--(9,9);
\draw[very thick] (4,7)--(6,7)--(6,8)
(2,4)--(2,6);
\draw[thick] (1.5,4.5)--(4.5, 4.5)--(4.5, 7.5);
\draw[thick, color=blue] (.5, 5.5)--(5.5, 5.5)--(5.5, 8.5);
\filldraw (1.5, 4.5) circle (6pt)
(4.5, 7.5) circle (6pt);
\filldraw[color=blue]
(.5, 5.5) circle (6pt)
(5.5, 8.5) circle (6pt);
\draw [very thick, color=red, fill=white] (.5, 4.5) circle (8pt);
\draw [very thick, color=red, fill=white] (5.5, 7.5) circle (8pt);
\node[] at (4.5, 8.3) {\small \( a \)};
\node[] at (5.5, 9.3) {\small \( b \)};
\node[] at (1.5, 3.6) {\small \( a-1 \)};
\node[] at (.6, 6.2) {\small \( b-1 \)};
\node[] at (5, 4.5) {\small \( i \)};
\node[] at (6.4, 5.3) {\small\( i+1 \)};
\node[] at (8, 7.5) {\small \( j \)};
\node[] at (3.5, 1.2) {\( \fc_{\gamma_0}(P) \)};
\end{tikzpicture}\quad
    \begin{tikzpicture}[scale=.45]
\foreach \i in {3,...,9}
\draw[dotted] (0,\i)--(\i,\i);
\foreach \i in {2,...,9}
\draw[dotted] (\i,9)--(\i,\i);
\draw[dotted] (0,2)--(0,9)
(1,2)--(1,9);
\draw[dotted] (0,2)--(2,2)--(9,9);
\draw[very thick] (4,7)--(5,7)--(5,8)--(6,8)
(2,4)--(2,6);
\draw[thick] (1.5,4.5)--(4.5, 4.5)--(4.5, 7.5);
\draw[thick, color=blue] (.5, 5.5)--(5.5, 5.5)--(5.5, 8.5);
\filldraw (1.5, 4.5) circle (6pt)
(4.5, 7.5) circle (6pt);
\filldraw[color=blue]
(.5, 5.5) circle (6pt)
(5.5, 8.5) circle (6pt);
\draw [very thick, color=red, fill=white] (.5, 4.5) circle (8pt);
\node[] at (4.5, 8.3) {\small \( a \)};
\node[] at (5.5, 9.3) {\small \( b \)};
\node[] at (1.5, 3.6) {\small \( a-1 \)};
\node[] at (.6, 6.2) {\small \( b-1 \)};
\node[] at (5, 4.5) {\small \( i \)};
\node[] at (6.4, 5.3) {\small\( i+1 \)};
\node[] at (8, 7.5) {\small \( j \)};
\node[] at (3.5, 1.2) {\( \fc_{\gamma_1}(P) \)};
\end{tikzpicture}\quad
    \begin{tikzpicture}[scale=.45]
\foreach \i in {3,...,9}
\draw[dotted] (0,\i)--(\i,\i);
\foreach \i in {2,...,9}
\draw[dotted] (\i,9)--(\i,\i);
\draw[dotted] (0,2)--(0,9)
(1,2)--(1,9);
\draw[dotted] (0,2)--(2,2)--(9,9);
\draw[very thick] (4,7)--(6,7)--(6,8)
(2,4)--(2,6);
\draw[thick] (1.5,5.5)--(5.5, 5.5)--(5.5, 7.5);
\draw[thick, color=blue] (.5, 4.5)--(4.5, 4.5)--(4.5, 8.5);
\filldraw (1.5, 5.5) circle (6pt)
(5.5, 7.5) circle (6pt);
\filldraw[color=blue]
(.5, 4.5) circle (6pt)
(4.5, 8.5) circle (6pt);
\draw [very thick, color=red, fill=white] (4.5, 7.5) circle (8pt);
\draw[very thick, color=red] (1.2, 4.2)--(1.8, 4.8)
(1.2, 4.8)--(1.8, 4.2);
\node[] at (4.5, 9.3) {\small \( b \)};
\node[] at (5.5, 8.3) {\small \( a \)};
\node[] at (0.5, 3.6) {\small \( b-1 \)};
\node[] at (1.6, 6.3) {\small \( a-1 \)};
\node[] at (5, 4.5) {\small \( i \)};
\node[] at (6.4, 5.3) {\small\( i+1 \)};
\node[] at (8, 7.5) {\small \( j \)};
\node[] at (3.5, 1.2) {\( \fc_{\gamma_0}(\phi(P)) \)};
\end{tikzpicture}\quad
  \caption{If \( a<b \), then \( \fc_{\gamma_0}(P)-1 = \fc_{\gamma_1}(P) = \fc_{\gamma_0}(\phi(P)) \).
  Circles indicate free cells.}
  \label{fig:image9}
\end{figure}

\begin{figure}
  \centering
    \begin{tikzpicture}[scale=.45]
\foreach \i in {3,...,9}
\draw[dotted] (0,\i)--(\i,\i);
\foreach \i in {2,...,9}
\draw[dotted] (\i,9)--(\i,\i);
\draw[dotted] (0,2)--(0,9)
(1,2)--(1,9);
\draw[dotted] (0,2)--(2,2)--(9,9);
\draw[very thick] (4,7)--(6,7)--(6,8)
(2,4)--(2,6);
\draw[thick] (1.5,4.5)--(4.5, 4.5)--(4.5, 7.5);
\draw[thick, color=blue] (.5, 5.5)--(5.5, 5.5)--(5.5, 8.5);
\filldraw (1.5, 4.5) circle (6pt)
(4.5, 7.5) circle (6pt);
\filldraw[color=blue]
(.5, 5.5) circle (6pt)
(5.5, 8.5) circle (6pt);
\draw [very thick, color=red, fill=white] (.5, 4.5) circle (8pt);
\draw[very thick, color=red] (5.2, 7.2)--(5.8, 7.8)
(5.2, 7.8)--(5.8, 7.2);
\node[] at (4.5, 8.3) {\small \( a \)};
\node[] at (5.5, 9.3) {\small \( a \)};
\node[] at (1.5, 3.6) {\small \( a-1 \)};
\node[] at (.6, 6.2) {\small \( a-1 \)};
\node[] at (5, 4.5) {\small \( i \)};
\node[] at (6.4, 5.3) {\small\( i+1 \)};
\node[] at (8, 7.5) {\small \( j \)};
\node[] at (3.5, 1.2) {\( \fc_{\gamma_0}(P) \)};
\end{tikzpicture}\quad
    \begin{tikzpicture}[scale=.45]
\foreach \i in {3,...,9}
\draw[dotted] (0,\i)--(\i,\i);
\foreach \i in {2,...,9}
\draw[dotted] (\i,9)--(\i,\i);
\draw[dotted] (0,2)--(0,9)
(1,2)--(1,9);
\draw[dotted] (0,2)--(2,2)--(9,9);
\draw[very thick] (4,7)--(5,7)--(5,8)--(6,8)
(2,4)--(2,6);
\draw[thick] (1.5,4.5)--(4.5, 4.5)--(4.5, 7.5);
\draw[thick, color=blue] (.5, 5.5)--(5.5, 5.5)--(5.5, 8.5);
\filldraw (1.5, 4.5) circle (6pt)
(4.5, 7.5) circle (6pt);
\filldraw[color=blue]
(.5, 5.5) circle (6pt)
(5.5, 8.5) circle (6pt);
\draw [very thick, color=red, fill=white] (.5, 4.5) circle (8pt);
\node[] at (4.5, 8.3) {\small \( a \)};
\node[] at (5.5, 9.3) {\small \( a \)};
\node[] at (1.5, 3.6) {\small \( a-1 \)};
\node[] at (.6, 6.2) {\small \( a-1 \)};
\node[] at (5, 4.5) {\small \( i \)};
\node[] at (6.4, 5.3) {\small\( i+1 \)};
\node[] at (8, 7.5) {\small \( j \)};
\node[] at (3.5, 1.2) {\( \fc_{\gamma_1}(P) \)};
\end{tikzpicture}\quad
    \begin{tikzpicture}[scale=.45]
\foreach \i in {3,...,9}
\draw[dotted] (0,\i)--(\i,\i);
\foreach \i in {2,...,9}
\draw[dotted] (\i,9)--(\i,\i);
\draw[dotted] (0,2)--(0,9)
(1,2)--(1,9);
\draw[dotted] (0,2)--(2,2)--(9,9);
\draw[very thick] (4,7)--(6,7)--(6,8)
(2,4)--(2,6);
\draw[thick] (1.5,4.5)--(4.5, 4.5)
(4.5,4.5)--(4.5, 8.5);
\draw[thick, color=blue] (.5, 5.5)--(5.5, 5.5)
(5.5, 5.5)--(5.5, 7.5);
\filldraw (1.5, 4.5) circle (6pt)
(4.5, 8.5) circle (6pt);
\filldraw[color=blue]
(.5, 5.5) circle (6pt)
(5.5, 7.5) circle (6pt);
\draw [very thick, color=red, fill=white] (4.5, 7.5) circle (8pt);
\draw [very thick, color=red, fill=white] (.5, 4.5) circle (8pt);
\node[] at (4.5, 9.3) {\small \( a \)};
\node[] at (5.5, 8.3) {\small \( a \)};
\node[] at (0.5, 6.2) {\small \( a-1 \)};
\node[] at (1.6, 3.6) {\small \( a-1 \)};
\node[] at (5, 4.5) {\small \( i \)};
\node[] at (6.4, 5.3) {\small\( i+1 \)};
\node[] at (8, 7.5) {\small \( j \)};
\node[] at (3.5, 1.2) {\( \fc_{\gamma_0}(\phi(P)) \)};
\end{tikzpicture}\quad
  \caption{If \( a=b \), then \( \fc_{\gamma_0}(P) = \fc_{\gamma_1}(P) = \fc_{\gamma_0}(\phi(P))-1 \).
  Circles indicate free cells.}
  \label{fig:image10}
\end{figure}

Now, we prove \eqref{eq:1}. Note that we have
\( B_0(\gamma_0) = B_0 (\gamma_1) = B_0 (\gamma_2) \) since their
elements have no rooks in \( (i,j) \) or \( (i+1, j) \). Given
\(P\in B_0 (\gamma_2) \), let \( a=r_j(P) \), \( b_1=c_i (P) \), and
\( b_2=c_{i+1}(P) \). We define a map
\( \theta:B_{0}(\gamma_2)\rightarrow B_{0}(\gamma_2) \) by
\[
  \theta(P) =
  \begin{cases}
    P, & \text{ if } a\ge b_1, b_2 \text{ or } a<b_1, b_2,\\
    (R_{i,i+1}\circ C_{i,i+1})(P), & \text{ if } b_1\le a< b_2 \text{ or } b_2\le a< b_1.
  \end{cases}
\]
It is easy checked that \( \theta \) is an involution. Hence, to prove
\eqref{eq:1}, it suffices to show that
for each \( P\in B_0 (\gamma_2) \), we have
\begin{equation}\label{eq:21}
  q^{\fc_{\gamma_1}(P)} + q^{\fc_{\gamma_1}(P)+1} = q^{\fc_{\gamma_0}(P)} + q^{\fc_{\gamma_2}(\theta(P))+1}.
\end{equation}
We verify \eqref{eq:21} by checking each case: 
\begin{itemize}
\item If \( a\ge b_1, b_2 \), by Figure \ref{fig:B01case}, we have \( \fc_{\gamma_0}(P) = \fc_{\gamma_1}(P)\)
  and \( \fc_{\gamma_2}(\theta(P))+1 = \fc_{\gamma_1}(P)+1\).
\item If \( a<b_1, b_2 \), by Figure \ref{fig:B02case}, we have \( \fc_{\gamma_0}(P) = \fc_{\gamma_1}(P)+1\)
  and \( \fc_{\gamma_2}(\theta(P))+1 = \fc_{\gamma_1}(P)\).
\item If \( b_1\le a< b_2 \), by Figure \ref{fig:B03case}, we have \( \fc_{\gamma_0}(P) = \fc_{\gamma_1}(P)+1\)
  and \( \fc_{\gamma_2}(\theta(P))+1 = \fc_{\gamma_1}(P)\).
\item If \( b_2\le a< b_1 \), by Figure \ref{fig:B04case}, we have \( \fc_{\gamma_0}(P) = \fc_{\gamma_1}(P)\)
  and \( \fc_{\gamma_2}(\theta(P))+1 = \fc_{\gamma_1}(P)+1\).
  \end{itemize}

\begin{figure}
  \centering
    \begin{tikzpicture}[scale=.45]
\foreach \i in {0,...,4}
\draw[dotted] (0,\i)--(3+\i,\i);
\foreach \i in {0,...,3}
\draw[dotted] (\i, 0)--(\i, 7);
\foreach \i in {0,...,4}
\draw[dotted] (4+\i, \i+1)--(4+\i,7);
\draw[dotted] (0,5)--(8,5);
\draw[dotted] (0,6)--(8,6);
\draw[dotted] (0,7)--(8,7)
(3,0)--(8,5);
\draw[very thick] (2,1)--(2,3)
(4,4)--(6,4)--(6,5);
\filldraw (3.5, 4.5) circle (6pt);
\draw[thick, color=blue] (1.5, 1.5)--(4.5, 1.5)--(4.5, 6.5); 
\filldraw[color=blue] (1.5, 1.5) circle (6pt)
(4.5, 6.5) circle (6pt);
\draw[thick, color=teal] (.5, 2.5)--(5.5, 2.5)--(5.5, 5.5);
\filldraw[color=teal] (.5, 2.5) circle (6pt)
(5.5, 5.5) circle (6pt);
\draw[very thick, color=red] (4.2, 4.2)--(4.8, 4.8)
(4.2, 4.8)--(4.8, 4.2);
\draw[very thick, color=red] (5.2, 4.2)--(5.8, 4.8)
(5.2, 4.8)--(5.8, 4.2);
\node [] at (3.5, 5.1) {\( a \)};
\node [] at (4.5, 7.4) {\small\( b_1 \)};
\node [] at (5.5, 6.3) {\small\( b_2 \)};
\node [] at (.5, 3.2) {\small\( b_2 -1\)};
\node [] at (1.5, .7) {\small\( b_1 -1\)};
\node[] at (4, -1) {\( \fc_{\gamma_0}(P) \)};
\node[] at (5, 1.5) {\small \( i \)};
\node[] at (6.4, 2.3) {\small\( i+1 \)};
\node[] at (8, 4.5) {\small \( j \)};
\end{tikzpicture}\quad
    \begin{tikzpicture}[scale=.45]
\foreach \i in {0,...,4}
\draw[dotted] (0,\i)--(3+\i,\i);
\foreach \i in {0,...,3}
\draw[dotted] (\i, 0)--(\i, 7);
\foreach \i in {0,...,4}
\draw[dotted] (4+\i, \i+1)--(4+\i,7);
\draw[dotted] (0,5)--(8,5);
\draw[dotted] (0,6)--(8,6);
\draw[dotted] (0,7)--(8,7)
(3,0)--(8,5);
\draw[very thick] (2,1)--(2,3)
(4,4)--(5,4)--(5,5)--(6,5);
\filldraw (3.5, 4.5) circle (6pt);
\draw[thick, color=blue] (1.5, 1.5)--(4.5, 1.5)--(4.5, 6.5); 
\filldraw[color=blue] (1.5, 1.5) circle (6pt)
(4.5, 6.5) circle (6pt);
\draw[thick, color=teal] (.5, 2.5)--(5.5, 2.5)--(5.5, 5.5);
\filldraw[color=teal] (.5, 2.5) circle (6pt)
(5.5, 5.5) circle (6pt);
\draw[very thick, color=red] (4.2, 4.2)--(4.8, 4.8)
(4.2, 4.8)--(4.8, 4.2);
\node [] at (3.5, 5.1) {\( a \)};
\node [] at (4.5, 7.4) {\small\( b_1 \)};
\node [] at (5.5, 6.3) {\small\( b_2 \)};
\node [] at (.5, 3.2) {\small\( b_2 -1\)};
\node [] at (1.5, .7) {\small\( b_1 -1\)};
\node[] at (4, -1) {\( \fc_{\gamma_1}(P) \)};
\node[] at (5, 1.5) {\small \( i \)};
\node[] at (6.4, 2.3) {\small\( i+1 \)};
\node[] at (8, 4.5) {\small \( j \)};
\end{tikzpicture}\quad
    \begin{tikzpicture}[scale=.45]
\foreach \i in {0,...,4}
\draw[dotted] (0,\i)--(3+\i,\i);
\foreach \i in {0,...,3}
\draw[dotted] (\i, 0)--(\i, 7);
\foreach \i in {0,...,4}
\draw[dotted] (4+\i, \i+1)--(4+\i,7);
\draw[dotted] (0,5)--(8,5);
\draw[dotted] (0,6)--(8,6);
\draw[dotted] (0,7)--(8,7)
(3,0)--(8,5);
\draw[very thick] (2,1)--(2,3)
(4,4)--(4,5)--(6,5);
\filldraw (3.5, 4.5) circle (6pt);
\draw[thick, color=blue] (1.5, 1.5)--(4.5, 1.5)--(4.5, 6.5); 
\filldraw[color=blue] (1.5, 1.5) circle (6pt)
(4.5, 6.5) circle (6pt);
\draw[thick, color=teal] (.5, 2.5)--(5.5, 2.5)--(5.5, 5.5);
\filldraw[color=teal] (.5, 2.5) circle (6pt)
(5.5, 5.5) circle (6pt);
\node [] at (3.5, 5.1) {\( a \)};
\node [] at (4.5, 7.4) {\small\( b_1 \)};
\node [] at (5.5, 6.3) {\small\( b_2 \)};
\node [] at (.5, 3.2) {\small\( b_2 -1\)};
\node [] at (1.5, .7) {\small\( b_1 -1\)};
\node[] at (4, -1) {\( \fc_{\gamma_2}(\theta(P)) \)};
\node[] at (5, 1.5) {\small \( i \)};
\node[] at (6.4, 2.3) {\small\( i+1 \)};
\node[] at (8, 4.5) {\small \( j \)};
\end{tikzpicture}
\caption{If \( a\ge b_1, b_2 \), we have \( \fc_{\gamma_0}(P) = \fc_{\gamma_1}(P)
  = \fc_{\gamma_2}(\theta(P)) \).}
  \label{fig:B01case}
\end{figure}
\begin{figure}
  \centering
    \begin{tikzpicture}[scale=.45]
\foreach \i in {0,...,4}
\draw[dotted] (0,\i)--(3+\i,\i);
\foreach \i in {0,...,3}
\draw[dotted] (\i, 0)--(\i, 7);
\foreach \i in {0,...,4}
\draw[dotted] (4+\i, \i+1)--(4+\i,7);
\draw[dotted] (0,5)--(8,5);
\draw[dotted] (0,6)--(8,6);
\draw[dotted] (0,7)--(8,7)
(3,0)--(8,5);
\draw[very thick] (2,1)--(2,3)
(4,4)--(6,4)--(6,5);
\filldraw (3.5, 4.5) circle (6pt);
\draw[thick, color=blue] (1.5, 1.5)--(4.5, 1.5)--(4.5, 6.5); 
\filldraw[color=blue] (1.5, 1.5) circle (6pt)
(4.5, 6.5) circle (6pt);
\draw[thick, color=teal] (.5, 2.5)--(5.5, 2.5)--(5.5, 5.5);
\filldraw[color=teal] (.5, 2.5) circle (6pt)
(5.5, 5.5) circle (6pt);
\draw [very thick, color=red, fill=white] (5.5, 4.5) circle (8pt)
(4.5, 4.5) circle (8pt);
\node [] at (3.5, 5.1) {\( a \)};
\node [] at (4.5, 7.4) {\small\( b_1 \)};
\node [] at (5.5, 6.3) {\small\( b_2 \)};
\node [] at (.5, 3.2) {\small\( b_2 -1\)};
\node [] at (1.5, .7) {\small\( b_1 -1\)};
\node[] at (4, -1) {\( \fc_{\gamma_0}(P) \)};
\node[] at (5, 1.5) {\small \( i \)};
\node[] at (6.4, 2.3) {\small\( i+1 \)};
\node[] at (8, 4.5) {\small \( j \)};
\end{tikzpicture}\quad
    \begin{tikzpicture}[scale=.45]
\foreach \i in {0,...,4}
\draw[dotted] (0,\i)--(3+\i,\i);
\foreach \i in {0,...,3}
\draw[dotted] (\i, 0)--(\i, 7);
\foreach \i in {0,...,4}
\draw[dotted] (4+\i, \i+1)--(4+\i,7);
\draw[dotted] (0,5)--(8,5);
\draw[dotted] (0,6)--(8,6);
\draw[dotted] (0,7)--(8,7)
(3,0)--(8,5);
\draw[very thick] (2,1)--(2,3)
(4,4)--(5,4)--(5,5)--(6,5);
\filldraw (3.5, 4.5) circle (6pt);
\draw[thick, color=blue] (1.5, 1.5)--(4.5, 1.5)--(4.5, 6.5); 
\filldraw[color=blue] (1.5, 1.5) circle (6pt)
(4.5, 6.5) circle (6pt);
\draw[thick, color=teal] (.5, 2.5)--(5.5, 2.5)--(5.5, 5.5);
\filldraw[color=teal] (.5, 2.5) circle (6pt)
(5.5, 5.5) circle (6pt);
\draw [very thick, color=red, fill=white] (4.5, 4.5) circle (8pt);
\node [] at (3.5, 5.1) {\( a \)};
\node [] at (4.5, 7.4) {\small\( b_1 \)};
\node [] at (5.5, 6.3) {\small\( b_2 \)};
\node [] at (.5, 3.2) {\small\( b_2 -1\)};
\node [] at (1.5, .7) {\small\( b_1 -1\)};
\node[] at (4, -1) {\( \fc_{\gamma_1}(P) \)};
\node[] at (5, 1.5) {\small \( i \)};
\node[] at (6.4, 2.3) {\small\( i+1 \)};
\node[] at (8, 4.5) {\small \( j \)};
\end{tikzpicture}\quad
    \begin{tikzpicture}[scale=.45]
\foreach \i in {0,...,4}
\draw[dotted] (0,\i)--(3+\i,\i);
\foreach \i in {0,...,3}
\draw[dotted] (\i, 0)--(\i, 7);
\foreach \i in {0,...,4}
\draw[dotted] (4+\i, \i+1)--(4+\i,7);
\draw[dotted] (0,5)--(8,5);
\draw[dotted] (0,6)--(8,6);
\draw[dotted] (0,7)--(8,7)
(3,0)--(8,5);
\draw[very thick] (2,1)--(2,3)
(4,4)--(4,5)--(6,5);
\filldraw (3.5, 4.5) circle (6pt);
\draw[thick, color=blue] (1.5, 1.5)--(4.5, 1.5)--(4.5, 6.5); 
\filldraw[color=blue] (1.5, 1.5) circle (6pt)
(4.5, 6.5) circle (6pt);
\draw[thick, color=teal] (.5, 2.5)--(5.5, 2.5)--(5.5, 5.5);
\filldraw[color=teal] (.5, 2.5) circle (6pt)
(5.5, 5.5) circle (6pt);
\node [] at (3.5, 5.1) {\( a \)};
\node [] at (4.5, 7.4) {\small\( b_1 \)};
\node [] at (5.5, 6.3) {\small\( b_2 \)};
\node [] at (.5, 3.2) {\small\( b_2 -1\)};
\node [] at (1.5, .7) {\small\( b_1 -1\)};
\node[] at (4, -1) {\( \fc_{\gamma_2}(\theta(P)) \)};
\node[] at (5, 1.5) {\small \( i \)};
\node[] at (6.4, 2.3) {\small\( i+1 \)};
\node[] at (8, 4.5) {\small \( j \)};
\end{tikzpicture}
\caption{If \( a< b_1, b_2 \), we have \( \fc_{\gamma_0}(P)-1 = \fc_{\gamma_1}(P)
  = \fc_{\gamma_2}(\theta(P))+1 \).}
  \label{fig:B02case}
\end{figure}

\begin{figure}
  \centering
    \begin{tikzpicture}[scale=.45]
\foreach \i in {0,...,4}
\draw[dotted] (0,\i)--(3+\i,\i);
\foreach \i in {0,...,3}
\draw[dotted] (\i, 0)--(\i, 7);
\foreach \i in {0,...,4}
\draw[dotted] (4+\i, \i+1)--(4+\i,7);
\draw[dotted] (0,5)--(8,5);
\draw[dotted] (0,6)--(8,6);
\draw[dotted] (0,7)--(8,7)
(3,0)--(8,5);
\draw[very thick] (2,1)--(2,3)
(4,4)--(6,4)--(6,5);
\filldraw (3.5, 4.5) circle (6pt);
\draw[thick, color=blue] (1.5, 1.5)--(4.5, 1.5)--(4.5, 6.5); 
\filldraw[color=blue] (1.5, 1.5) circle (6pt)
(4.5, 6.5) circle (6pt);
\draw[thick, color=teal] (.5, 2.5)--(5.5, 2.5)--(5.5, 5.5);
\filldraw[color=teal] (.5, 2.5) circle (6pt)
(5.5, 5.5) circle (6pt);
\draw [very thick, color=red, fill=white] (.5, 1.5) circle (8pt)
(5.5, 4.5) circle (8pt);
\draw[very thick, color=red] (4.2, 4.2)--(4.8, 4.8)
(4.2, 4.8)--(4.8, 4.2);
\node [] at (3.5, 5.1) {\( a \)};
\node [] at (4.5, 7.4) {\small\( b_1 \)};
\node [] at (5.5, 6.3) {\small\( b_2 \)};
\node [] at (.5, 3.2) {\small\( b_2 -1\)};
\node [] at (1.5, .7) {\small\( b_1 -1\)};
\node[] at (4, -1) {\( \fc_{\gamma_0}(P) \)};
\draw[very thick, color=red] (4.2, 5.2)--(4.8, 5.8) (4.2, 5.8)--(4.8, 5.2);
\node[] at (5, 1.5) {\small \( i \)};
\node[] at (6.4, 2.3) {\small\( i+1 \)};
\node[] at (8, 4.5) {\small \( j \)};
\end{tikzpicture}\quad
    \begin{tikzpicture}[scale=.45]
\foreach \i in {0,...,4}
\draw[dotted] (0,\i)--(3+\i,\i);
\foreach \i in {0,...,3}
\draw[dotted] (\i, 0)--(\i, 7);
\foreach \i in {0,...,4}
\draw[dotted] (4+\i, \i+1)--(4+\i,7);
\draw[dotted] (0,5)--(8,5);
\draw[dotted] (0,6)--(8,6);
\draw[dotted] (0,7)--(8,7)
(3,0)--(8,5);
\draw[very thick] (2,1)--(2,3)
(4,4)--(5,4)--(5,5)--(6,5);
\filldraw (3.5, 4.5) circle (6pt);
\draw[thick, color=blue] (1.5, 1.5)--(4.5, 1.5)--(4.5, 6.5); 
\filldraw[color=blue] (1.5, 1.5) circle (6pt)
(4.5, 6.5) circle (6pt);
\draw[thick, color=teal] (.5, 2.5)--(5.5, 2.5)--(5.5, 5.5);
\filldraw[color=teal] (.5, 2.5) circle (6pt)
(5.5, 5.5) circle (6pt);
\draw[very thick, color=red] (4.2, 4.2)--(4.8, 4.8)
(4.2, 4.8)--(4.8, 4.2);
\draw [very thick, color=red, fill=white] (.5, 1.5) circle (8pt);
\node [] at (3.5, 5.1) {\( a \)};
\node [] at (4.5, 7.4) {\small\( b_1 \)};
\node [] at (5.5, 6.3) {\small\( b_2 \)};
\node [] at (.5, 3.2) {\small\( b_2 -1\)};
\node [] at (1.5, .7) {\small\( b_1 -1\)};
\node[] at (4, -1) {\( \fc_{\gamma_1}(P) \)};
\draw[very thick, color=red] (4.2, 5.2)--(4.8, 5.8) (4.2, 5.8)--(4.8, 5.2);
\node[] at (5, 1.5) {\small \( i \)};
\node[] at (6.4, 2.3) {\small\( i+1 \)};
\node[] at (8, 4.5) {\small \( j \)};
\end{tikzpicture}\quad
    \begin{tikzpicture}[scale=.45]
\foreach \i in {0,...,4}
\draw[dotted] (0,\i)--(3+\i,\i);
\foreach \i in {0,...,3}
\draw[dotted] (\i, 0)--(\i, 7);
\foreach \i in {0,...,4}
\draw[dotted] (4+\i, \i+1)--(4+\i,7);
\draw[dotted] (0,5)--(8,5);
\draw[dotted] (0,6)--(8,6);
\draw[dotted] (0,7)--(8,7)
(3,0)--(8,5);
\draw[very thick] (2,1)--(2,3)
(4,4)--(4,5)--(6,5);
\filldraw (3.5, 4.5) circle (6pt);
\draw[thick, color=blue] (1.5, 2.5)--(5.5, 2.5)--(5.5, 6.5); 
\filldraw[color=blue] (1.5, 2.5) circle (6pt)
(5.5, 6.5) circle (6pt);
\draw[thick, color=teal] (.5, 1.5)--(4.5, 1.5)--(4.5, 5.5);
\filldraw[color=teal] (.5, 1.5) circle (6pt)
(4.5, 5.5) circle (6pt);
\draw[very thick, color=red] (1.2, 1.2)--(1.8, 1.8)
(1.2, 1.8)--(1.8, 1.2);
\node [] at (3.5, 5.1) {\( a \)};
\node [] at (4.5, 6.4) {\small\( b_2 \)};
\node [] at (5.5, 7.4) {\small\( b_1 \)};
\node [] at (.5, .8) {\small\( b_2 -1\)};
\node [] at (1.5,3.2) {\small\( b_1 -1\)};
\node[] at (4, -1) {\( \fc_{\gamma_2}(\theta(P)) \)};
\draw[very thick, color=red] (5.2, 5.2)--(5.8, 5.8) (5.2, 5.8)--(5.8, 5.2);
\node[] at (5, 1.5) {\small \( i \)};
\node[] at (6.4, 2.3) {\small\( i+1 \)};
\node[] at (8, 4.5) {\small \( j \)};
\end{tikzpicture}
  \caption{If \( b_1\le a <b_2 \), we have \( \fc_{\gamma_0}(P)-1 = \fc_{\gamma_1}(P)
  = \fc_{\gamma_2}(\theta(P))+1 \).}
  \label{fig:B03case}
\end{figure}

\begin{figure}
  \centering
    \begin{tikzpicture}[scale=.45]
\foreach \i in {0,...,4}
\draw[dotted] (0,\i)--(3+\i,\i);
\foreach \i in {0,...,3}
\draw[dotted] (\i, 0)--(\i, 7);
\foreach \i in {0,...,4}
\draw[dotted] (4+\i, \i+1)--(4+\i,7);
\draw[dotted] (0,5)--(8,5);
\draw[dotted] (0,6)--(8,6);
\draw[dotted] (0,7)--(8,7)
(3,0)--(8,5);
\draw[very thick] (2,1)--(2,3)
(4,4)--(6,4)--(6,5);
\filldraw (3.5, 4.5) circle (6pt);
\draw[thick, color=blue] (1.5, 1.5)--(4.5, 1.5)--(4.5, 6.5); 
\filldraw[color=blue] (1.5, 1.5) circle (6pt)
(4.5, 6.5) circle (6pt);
\draw[thick, color=teal] (.5, 2.5)--(5.5, 2.5)--(5.5, 5.5);
\filldraw[color=teal] (.5, 2.5) circle (6pt)
(5.5, 5.5) circle (6pt);
\draw [very thick, color=red, fill=white] (4.5, 4.5) circle (8pt);
\draw[very thick, color=red] (5.2, 4.2)--(5.8, 4.8)
(5.2, 4.8)--(5.8, 4.2);
\draw[very thick, color=red] (0.2, 1.2)--(0.8, 1.8)
(0.2, 1.8)--(0.8, 1.2);
\node [] at (3.5, 5.1) {\( a \)};
\node [] at (4.5, 7.4) {\small\( b_1 \)};
\node [] at (5.5, 6.3) {\small\( b_2 \)};
\node [] at (.5, 3.1) {\small\( b_2 -1\)};
\node [] at (1.5, .8) {\small\( b_1 -1\)};
\node[] at (4, -1) {\( \fc_{\gamma_0}(P) \)};
\draw[very thick, color=red, fill=white] (4.5, 5.5) circle (8pt);
\node[] at (5, 1.5) {\small \( i \)};
\node[] at (6.4, 2.3) {\small\( i+1 \)};
\node[] at (8, 4.5) {\small \( j \)};
\end{tikzpicture}\quad
    \begin{tikzpicture}[scale=.45]
\foreach \i in {0,...,4}
\draw[dotted] (0,\i)--(3+\i,\i);
\foreach \i in {0,...,3}
\draw[dotted] (\i, 0)--(\i, 7);
\foreach \i in {0,...,4}
\draw[dotted] (4+\i, \i+1)--(4+\i,7);
\draw[dotted] (0,5)--(8,5);
\draw[dotted] (0,6)--(8,6);
\draw[dotted] (0,7)--(8,7)
(3,0)--(8,5);
\draw[very thick] (2,1)--(2,3)
(4,4)--(5,4)--(5,5)--(6,5);
\filldraw (3.5, 4.5) circle (6pt);
\draw[thick, color=blue] (1.5, 1.5)--(4.5, 1.5)--(4.5, 6.5); 
\filldraw[color=blue] (1.5, 1.5) circle (6pt)
(4.5, 6.5) circle (6pt);
\draw[thick, color=teal] (.5, 2.5)--(5.5, 2.5)--(5.5, 5.5);
\filldraw[color=teal] (.5, 2.5) circle (6pt)
(5.5, 5.5) circle (6pt);
\draw [very thick, color=red, fill=white] (4.5, 4.5) circle (8pt);
\draw[very thick, color=red] (0.2, 1.2)--(0.8, 1.8)
(0.2, 1.8)--(0.8, 1.2);
\node [] at (3.5, 5.1) {\( a \)};
\node [] at (4.5, 7.4) {\small\( b_1 \)};
\node [] at (5.5, 6.3) {\small\( b_2 \)};
\node [] at (.5, 3.1) {\small\( b_2 -1\)};
\node [] at (1.5, .8) {\small\( b_1 -1\)};
\node[] at (4, -1) {\( \fc_{\gamma_1}(P) \)};
\draw[very thick, color=red, fill=white] (4.5, 5.5) circle (8pt);
\node[] at (5, 1.5) {\small \( i \)};
\node[] at (6.4, 2.3) {\small\( i+1 \)};
\node[] at (8, 4.5) {\small \( j \)};
\end{tikzpicture}\quad
    \begin{tikzpicture}[scale=.45]
\foreach \i in {0,...,4}
\draw[dotted] (0,\i)--(3+\i,\i);
\foreach \i in {0,...,3}
\draw[dotted] (\i, 0)--(\i, 7);
\foreach \i in {0,...,4}
\draw[dotted] (4+\i, \i+1)--(4+\i,7);
\draw[dotted] (0,5)--(8,5);
\draw[dotted] (0,6)--(8,6);
\draw[dotted] (0,7)--(8,7)
(3,0)--(8,5);
\draw[very thick] (2,1)--(2,3)
(4,4)--(4,5)--(6,5);
\filldraw (3.5, 4.5) circle (6pt);
\draw[thick, color=blue] (1.5, 2.5)--(5.5, 2.5)--(5.5, 6.5); 
\filldraw[color=blue] (1.5, 2.5) circle (6pt)
(5.5, 6.5) circle (6pt);
\draw[thick, color=teal] (.5, 1.5)--(4.5, 1.5)--(4.5, 5.5);
\filldraw[color=teal] (.5, 1.5) circle (6pt)
(4.5, 5.5) circle (6pt);
\draw [very thick, color=red, fill=white] (1.5, 1.5) circle (8pt);
\node [] at (3.5, 5.1) {\( a \)};
\node [] at (4.5, 6.4) {\small\( b_2 \)};
\node [] at (5.5, 7.4) {\small\( b_1 \)};
\node [] at (.5, .8) {\small\( b_2 -1\)};
\node [] at (1.5,3.2) {\small\( b_1 -1\)};
\node[] at (4, -1) {\( \fc_{\gamma_2}(\theta(P)) \)};
\draw[very thick, color=red, fill=white] (5.5, 5.5) circle (8pt);
\node[] at (5, 1.5) {\small \( i \)};
\node[] at (6.4, 2.3) {\small\( i+1 \)};
\node[] at (8, 4.5) {\small \( j \)};
\end{tikzpicture}
\caption{If \( b_2\le a <b_1 \), we have \( \fc_{\gamma_0}(P) = \fc_{\gamma_1}(P)
  =\fc_{\gamma_2}(\theta(P)) \).}
  \label{fig:B04case}
\end{figure}

Hence, we obtain \eqref{eq:1}, which completes the proof.
\end{proof}

Similarly, we prove the modular law for the other modular triples.

\begin{lem}\label{lem:ML-2}
  Let \( (\gamma_0,\gamma_1,\gamma_2)\in \MM_n^{1,i} \) and
  \( \mu\vdash n \). Then \eqref{eq:3} holds.
\end{lem}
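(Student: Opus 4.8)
The plan is to follow the strategy of \Cref{lem:ML-1}. Set $h=\gamma_1(i)$, so that $\gamma_0,\gamma_1,\gamma_2$ agree away from column $i$ and satisfy $\gamma_0(i)=h-1$, $\gamma_1(i)=h$, $\gamma_2(i)=h+1$; passing from $\gamma_0$ to $\gamma_1$ to $\gamma_2$ enlarges the associated partition first by the cell $(i,h)$ and then by $(i,h+1)$. Hence $\LRP(\gamma_0,\mu)\subseteq\LRP(\gamma_1,\mu)\subseteq\LRP(\gamma_2,\mu)$ (the rook positions, and therefore the type, are unchanged under these inclusions), and I would split $\LRP(\gamma_2,\mu)=A_0\sqcup A_1\sqcup A_2$ according to whether a placement has a rook in neither of $(i,h),(i,h+1)$, in $(i,h)$, or in $(i,h+1)$. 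Then $\LRP(\gamma_0,\mu)=A_0$ and $\LRP(\gamma_1,\mu)=A_0\sqcup A_1$.

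The key point is that $\ext(P)$, and therefore all the rank data used in \Cref{def:fcpair}, depends only on the rook positions of $P$ and not on the ambient path. Thus if $P$ is valid for two of the paths, the only cells that can change their free/non-free status are $(i,h)$ and $(i,h+1)$, the two cells that cross the boundary. Writing $\epsilon_h(P),\epsilon_{h+1}(P)\in\{0,1\}$ for the corresponding freeness indicators determined by $\ext(P)$, I obtain $\fc_{\gamma_0}(P)=\fc_{\gamma_1}(P)+\epsilon_h(P)$ and $\fc_{\gamma_2}(P)=\fc_{\gamma_1}(P)-\epsilon_{h+1}(P)$ for $P\in A_0$, and $\fc_{\gamma_1}(P)=\fc_{\gamma_2}(P)+\epsilon_{h+1}(P)$ for $P\in A_1$. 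Substituting the decomposition into \eqref{eq:3} and separating the $A_0$ contribution from the $A_1,A_2$ contribution reduces \eqref{eq:3} to
\begin{align*}
(1+q)\sum_{P\in A_0}q^{\fc_{\gamma_1}(P)}&=\sum_{P\in A_0}q^{\fc_{\gamma_0}(P)}+q\sum_{P\in A_0}q^{\fc_{\gamma_2}(P)},\\
(1+q)\sum_{P\in A_1}q^{\fc_{\gamma_1}(P)}&=q\sum_{P\in A_1}q^{\fc_{\gamma_2}(P)}+q\sum_{P\in A_2}q^{\fc_{\gamma_2}(P)}.
\end{align*}

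For the first identity I would build an involution $\theta$ on $A_0$ out of the row and column swaps $R_{h,h+1}$ and $C_{h,h+1}$ of \Cref{def:2}, whose admissibility rests on condition (3), namely $\gamma_1(h)=\gamma_1(h+1)$. Since $i<h$, such a map fixes column $i$ while exchanging rows $h$ and $h+1$, and so interchanges $\epsilon_h$ and $\epsilon_{h+1}$ while preserving the type and $\fc_{\gamma_1}$; pairing each $P$ with $\theta(P)$ and using $q^{\epsilon}+q^{1-\epsilon}=1+q$ for $\epsilon\in\{0,1\}$ collapses the right-hand side to $(1+q)\sum_{P\in A_0}q^{\fc_{\gamma_1}(P)}$. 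For the second identity I would construct a bijection $\psi\colon A_1\to A_2$ that raises the rook from $(i,h)$ to $(i,h+1)$ and reroutes its linked rook through column $h+1$ instead of column $h$---again legitimate precisely because $\gamma_1(h)=\gamma_1(h+1)$---and then verify, as in \eqref{eq:20}, the local relation $q^{\fc_{\gamma_2}(P)}+q^{\fc_{\gamma_2}(\psi(P))}=q^{\fc_{\gamma_1}(P)}+q^{\fc_{\gamma_1}(P)-1}$, i.e.\ that $\psi$ changes $\fc_{\gamma_2}$ by $+1$ when $(i,h+1)$ is free for $P$ and by $-1$ otherwise.

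The hard part will be exactly this verification for $\psi$, together with checking that $\theta$ and $\psi$ are genuinely well defined. Unlike the involution $\theta$, the map $\psi$ moves a rook and hence shifts the rank of the leftmost rook of the affected rows and of the topmost rook of the affected columns, so establishing the $\pm 1$ change of $\fc_{\gamma_2}$ requires splitting into cases according to the relative order of these ranks, exactly parallel to the cases $a>b$, $a<b$, $a=b$ and the four-case analysis in the proof of \Cref{lem:ML-1}; I would record each case in a figure analogous to \Cref{fig:a>b,fig:image9,fig:image10,fig:B01case,fig:B02case,fig:B03case,fig:B04case}. The most delicate configurations are the boundary ones---when a rook sits in the cell $(h,h+1)$, or when a rook occupies the topmost cell of some column in row $h$ or $h+1$, so that a naive swap would leave the board---and handling these uniformly, so that both $\theta$ and $\psi$ preserve validity and type, is where the bulk of the work lies.
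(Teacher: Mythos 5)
Your overall strategy (split the placements according to rooks in the two cells where the three paths differ, then use the swaps of \Cref{def:2} to build an involution and a bijection, verified by a case analysis on ranks) is indeed the strategy of the paper's proof, parallel to \Cref{lem:ML-1}. But your setup reverses the geometry, and this breaks the reduction. In this paper rooks live in the partition \( \lambda \) of cells lying \emph{above} the Dyck path (see the definition of a linked rook and \Cref{def:fcpair}, where free cells are cells above \( \gamma \)); raising the column height \( \gamma(i) \) therefore \emph{deletes} cells from the board. For \( (\gamma_0,\gamma_1,\gamma_2)\in\MM_n^{1,i} \) with \( h=\gamma_1(i) \) one has \( \lambda_0=\lambda_1\cup\{(i,h)\} \) and \( \lambda_1=\lambda_2\cup\{(i,h+1)\} \), hence
\[
\LRP(\gamma_2,\mu)\subseteq\LRP(\gamma_1,\mu)\subseteq\LRP(\gamma_0,\mu),
\]
exactly the opposite of the inclusions you assert. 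Consequently your identifications \( \LRP(\gamma_0,\mu)=A_0 \) and \( \LRP(\gamma_1,\mu)=A_0\sqcup A_1 \) are false: a placement with a rook in \( (i,h) \) lies \emph{only} in \( \LRP(\gamma_0,\mu) \), and one with a rook in \( (i,h+1) \) lies in \( \LRP(\gamma_0,\mu) \) and \( \LRP(\gamma_1,\mu) \) but never in \( \LRP(\gamma_2,\mu) \). Read literally, your \( A_1 \) and \( A_2 \) (as subsets of \( \LRP(\gamma_2,\mu) \)) are empty, so your two identities only account for the placements avoiding both cells and cannot add up to \eqref{eq:3}.

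This is not a cosmetic relabeling, because \eqref{eq:3} is not symmetric under \( \gamma_0\leftrightarrow\gamma_2 \): the coefficient \( 1 \) sits on the \( \gamma_0 \)-sum and \( q \) on the \( \gamma_2 \)-sum. Your first displayed identity does survive---it coincides with the paper's \eqref{eqn:ML2id1}, since the placements avoiding both cells form the same set for all three paths---but your second identity is the mirror image of what is needed. The correct complementary statement, the paper's \eqref{eqn:ML2id2}, reads
\[
(1+q)\sum_{P\in A_1(\gamma_1)}q^{\fc_{\gamma_1}(P)}
=\sum_{P\in A_1(\gamma_0)}q^{\fc_{\gamma_0}(P)}+\sum_{P\in A_2(\gamma_0)}q^{\fc_{\gamma_0}(P)},
\]
with \emph{no} factor of \( q \), where \( A_1 \) denotes placements with a rook in \( (i,h+1) \) (valid on \( \gamma_0 \) and \( \gamma_1 \)) and \( A_2 \) those with a rook in \( (i,h) \) (valid only on \( \gamma_0 \)); the paper's bijection \( \varphi:A_1(\gamma_1)\to A_2(\gamma_0) \) is \( R_{h,h+1}\circ C_{h,h+1} \) (or \( R_{h,h+1} \) when the two relevant ranks coincide) and satisfies \eqref{eq:20-2}, i.e.\ \( q^{\fc_{\gamma_0}(P)}+q^{\fc_{\gamma_0}(\varphi(P))}=q^{\fc_{\gamma_1}(P)}+q^{\fc_{\gamma_1}(P)+1} \). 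Your \( \psi \) instead attaches factors of \( q \) to sums of \( q^{\fc_{\gamma_2}} \) over placements whose rooks occupy cells that are not above \( \gamma_2 \), quantities that are undefined (or vacuous) in the paper's terms. Fixing the orientation would essentially turn your sketch into the paper's proof; as written, the reduction of \eqref{eq:3} to your two identities fails.
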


\begin{proof}
  Let \( j=\gamma_2(i)+1 \). Note that
  \( \LRP(\gamma_2,\mu) \subseteq \LRP(\gamma_1,\mu) \subseteq
  \LRP(\gamma_0,\mu) \). For \( \gamma\in \D_n \), we define
\begin{align*}
  A_0(\gamma)
  &= \{P\in \LRP(\gamma,\mu): \mbox{\( P \) has a rook in neither \( (i,j) \) nor \( (i,j-1) \)} \},\\
  A_1(\gamma)
  &= \{P\in \LRP(\gamma,\mu): \mbox{\( P \) has a rook in \( (i,j) \)} \},\\
  A_2(\gamma)
  &= \{P\in \LRP(\gamma,\mu): \mbox{\( P \) has a rook in \( (i,j-1) \)} \}.
\end{align*}
We claim that 
\begin{equation}\label{eqn:ML2id1}
  (1+q) \sum_{P\in A_0(\gamma_1)} q^{\fc_{\gamma_1}(P)}
  = \sum_{P\in A_0(\gamma_0)} q^{\fc_{\gamma_0}(P)}
  + q \sum_{P\in A_0(\gamma_2)} q^{\fc_{\gamma_2}(P)}
\end{equation}
and 
\begin{equation}\label{eqn:ML2id2}
  (1+q) \sum_{P\in A_1(\gamma_1)} q^{\fc_{\gamma_1}(P)}
  = \sum_{P\in A_1(\gamma_0)} q^{\fc_{\gamma_0}(P)}
  + \sum_{P\in A_2(\gamma_0)} q^{\fc_{\gamma_0}(P)},
\end{equation}
which imply \eqref{eq:3}.

To prove \eqref{eqn:ML2id2}, it suffices to construct a
bijection \( \varphi:A_1(\gamma_1) \to A_2(\gamma_0) \) such that
\begin{equation}\label{eq:20-2}
  q^{\fc_{\gamma_0}(P)} + q^{\fc_{\gamma_0}(\varphi(P))} = q^{\fc_{\gamma_1}(P)} + q^{\fc_{\gamma_1}(P)+1},
\end{equation}
because by the fact that \( A_1(\gamma_1) = A_1(\gamma_0) \), summing
\eqref{eq:20-2} over all \( P\in A_1(\gamma_1) \) gives
\eqref{eqn:ML2id2}. Given \( P\in A_1(\gamma_1) \), let \( a=r_j(P) \)
and \( b=r_{j-1}(P) \). Then we define
\[
 \varphi(P) =
  \begin{cases}
    (R_{j-1,j}\circ C_{j-1,j})(P) & \mbox{if \( a\ne b \)},\\
    R_{j-1,j}(P) & \mbox{if \( a= b \)}.
  \end{cases}
\]
We can prove \eqref{eq:20-2} by considering the possible cases:
\begin{itemize}
\item If \( a>b \), by \Cref{fig:a>b2}, we have
  \( \fc_{\gamma_0}(P) = \fc_{\gamma_1}(P)+1 \) and
  \( \fc_{\gamma_0}(\varphi(P)) = \fc_{\gamma_1}(P) \).
\item If \( a<b \), by \Cref{fig:a<b2}, we have
  \( \fc_{\gamma_0}(P) = \fc_{\gamma_1}(P) \) and
  \( \fc_{\gamma_1}(\varphi(P)) = \fc_{\gamma_1}(P)+1 \).
\item If \( a=b \), by \Cref{fig:a=b2}, we have
  \( \fc_{\gamma_0}(P) = \fc_{\gamma_1}(P) \) and
  \( \fc_{\gamma_0}(\varphi(P)) = \fc_{\gamma_1}(P)+1 \). 
\end{itemize}
\begin{figure}
  \centering
    \begin{tikzpicture}[scale=.4]
\foreach \i in {3,...,9}
\draw[dotted] (0,\i)--(\i,\i);
\foreach \i in {2,...,9}
\draw[dotted] (\i,9)--(\i,\i);
\draw[dotted] (0,2)--(0,9)
(1,2)--(1,9);
\draw[dotted] (0,2)--(2,2)--(9,9);
\draw[very thick] (4,7)--(6,7)
(1,4)--(2,4)--(2,6);
\draw[thick] (1.5,5.5)--(5.5, 5.5)--(5.5, 8.5);
\draw[thick, color=blue] (.5, 4.5)--(4.5, 4.5)--(4.5, 7.5);
\filldraw (1.5, 5.5) circle (6pt)
(5.5, 8.5) circle (6pt);
\filldraw[color=blue]
(.5, 4.5) circle (6pt)
(4.5, 7.5) circle (6pt);
\draw [very thick, color=red, fill=white] (1.5, 4.5) circle (8pt);
\node[] at (4.3, 8.3) {\small \( b+1 \)};
\node[] at (5.5, 9.4) {\small \( a+1 \)};
\node[] at (.5, 5.2) {\small \( b \)};
\node[] at (1.5, 6.2) {\small \( a \)};
\node[] at (1.5, 9.5) {\small \( i \)};
\node[] at (-.5, 5.5) {\small \( j \)};
\node[] at (-.9, 4.5) {\small \( j-1 \)};
\node[] at (3.5, 1.2) {\( \fc_{\gamma_0}(P) \)};
\draw[very thick, color=red, fill=white] (5.5, 7.5) circle (8pt);
\end{tikzpicture}
    \begin{tikzpicture}[scale=.4]
\foreach \i in {3,...,9}
\draw[dotted] (0,\i)--(\i,\i);
\foreach \i in {2,...,9}
\draw[dotted] (\i,9)--(\i,\i);
\draw[dotted] (0,2)--(0,9)
(1,2)--(1,9);
\draw[dotted] (0,2)--(2,2)--(9,9);
\draw[very thick] (4,7)--(6,7)
(1,4)--(1,5)--(2,5)--(2,6);
\draw[thick] (1.5,5.5)--(5.5, 5.5)--(5.5, 8.5);
\draw[thick, color=blue] (.5, 4.5)--(4.5, 4.5)--(4.5, 7.5);
\filldraw (1.5, 5.5) circle (6pt)
(5.5, 8.5) circle (6pt);
\filldraw[color=blue]
(.5, 4.5) circle (6pt)
(4.5, 7.5) circle (6pt);
\node[] at (4.3, 8.3) {\small \( b+1 \)};
\node[] at (5.5, 9.4) {\small \( a+1 \)};
\node[] at (.5, 5.2) {\small \( b \)};
\node[] at (1.5, 6.2) {\small \( a \)};
\node[] at (1.5, 9.5) {\small \( i \)};
\node[] at (-.5, 5.5) {\small \( j \)};
\node[] at (-.9, 4.5) {\small \( j-1 \)};
\node[] at (3.5, 1.2) {\( \fc_{\gamma_1}(P) \)};
\draw[very thick, color=red, fill=white] (5.5, 7.5) circle (8pt);
\end{tikzpicture}
    \begin{tikzpicture}[scale=.4]
\foreach \i in {3,...,9}
\draw[dotted] (0,\i)--(\i,\i);
\foreach \i in {2,...,9}
\draw[dotted] (\i,9)--(\i,\i);
\draw[dotted] (0,2)--(0,9)
(1,2)--(1,9);
\draw[dotted] (0,2)--(2,2)--(9,9);
\draw[very thick] (4,7)--(6,7)
(1,4)--(2,4)--(2,6);
\draw[thick] (1.5,4.5)--(4.5, 4.5)--(4.5, 8.5);
\draw[thick, color=blue] (.5, 5.5)--(5.5, 5.5)--(5.5, 7.5);
\filldraw (1.5, 4.5) circle (6pt)
(4.5, 8.5) circle (6pt);
\filldraw[color=blue]
(.5, 5.5) circle (6pt)
(5.5, 7.5) circle (6pt);
\draw[very thick, color=red] (0.2, 4.2)--(0.8, 4.8)
(0.2, 4.8)--(0.8, 4.2);
\node[] at (4.5, 9.4) {\small \( a+1 \)};
\node[] at (5.9, 8.3) {\small \( b+1 \)};
\node[] at (.5, 6.2) {\small \( b \)};
\node[] at (1.5, 5.2) {\small \( a \)};
\node[] at (1.5, 9.5) {\small \( i \)};
\node[] at (-.5, 5.5) {\small \( j \)};
\node[] at (-.9, 4.5) {\small \( j-1 \)};
\node[] at (3.5, 1.2) {\( \fc_{\gamma_0}(\varphi(P)) \)};
\draw[very thick, color=red, fill=white] (4.5, 7.5) circle (8pt);
\end{tikzpicture}
  \caption{If \( a>b \), then \( \fc_{\gamma_0}(P)-1 = \fc_{\gamma_1}(P) = \fc_{\gamma_0}(\varphi(P)) \).}
  \label{fig:a>b2}
\end{figure}

\begin{figure}
  \centering
    \begin{tikzpicture}[scale=.4]
\foreach \i in {3,...,9}
\draw[dotted] (0,\i)--(\i,\i);
\foreach \i in {2,...,9}
\draw[dotted] (\i,9)--(\i,\i);
\draw[dotted] (0,2)--(0,9)
(1,2)--(1,9);
\draw[dotted] (0,2)--(2,2)--(9,9);
\draw[very thick] (4,7)--(6,7)
(1,4)--(2,4)--(2,6);
\draw[thick] (1.5,5.5)--(5.5, 5.5)--(5.5, 8.5);
\draw[thick, color=blue] (.5, 4.5)--(4.5, 4.5)--(4.5, 7.5);
\filldraw (1.5, 5.5) circle (6pt)
(5.5, 8.5) circle (6pt);
\filldraw[color=blue]
(.5, 4.5) circle (6pt)
(4.5, 7.5) circle (6pt);
\draw[very thick, color=red] (1.2, 4.2)--(1.8, 4.8)
(1.2, 4.8)--(1.8, 4.2);
\node[] at (4.3, 8.3) {\small \( b+1 \)};
\node[] at (5.5, 9.4) {\small \( a+1 \)};
\node[] at (.5, 5.2) {\small \( b \)};
\node[] at (1.5, 6.2) {\small \( a \)};
\node[] at (1.5, 9.5) {\small \( i \)};
\node[] at (-.5, 5.5) {\small \( j \)};
\node[] at (-.9, 4.5) {\small \( j-1 \)};
\node[] at (3.5, 1.2) {\( \fc_{\gamma_0}(P) \)};
\draw[very thick, color=red] (5.2, 7.2)--(5.8, 7.8) (5.2, 7.8)--(5.8, 7.2);
\end{tikzpicture}
    \begin{tikzpicture}[scale=.4]
\foreach \i in {3,...,9}
\draw[dotted] (0,\i)--(\i,\i);
\foreach \i in {2,...,9}
\draw[dotted] (\i,9)--(\i,\i);
\draw[dotted] (0,2)--(0,9)
(1,2)--(1,9);
\draw[dotted] (0,2)--(2,2)--(9,9);
\draw[very thick] (4,7)--(6,7)
(1,4)--(1,5)--(2,5)--(2,6);
\draw[thick] (1.5,5.5)--(5.5, 5.5)--(5.5, 8.5);
\draw[thick, color=blue] (.5, 4.5)--(4.5, 4.5)--(4.5, 7.5);
\filldraw (1.5, 5.5) circle (6pt)
(5.5, 8.5) circle (6pt);
\filldraw[color=blue]
(.5, 4.5) circle (6pt)
(4.5, 7.5) circle (6pt);
\node[] at (4.3, 8.3) {\small \( b+1 \)};
\node[] at (5.5, 9.4) {\small \( a+1 \)};
\node[] at (.5, 5.2) {\small \( b \)};
\node[] at (1.5, 6.2) {\small \( a \)};
\node[] at (1.5, 9.5) {\small \( i \)};
\node[] at (-.5, 5.5) {\small \( j \)};
\node[] at (-.9, 4.5) {\small \( j-1 \)};
\node[] at (3.5, 1.2) {\( \fc_{\gamma_1}(P) \)};
\draw[very thick, color=red] (5.2, 7.2)--(5.8, 7.8) (5.2, 7.8)--(5.8, 7.2);
\end{tikzpicture}
    \begin{tikzpicture}[scale=.4]
\foreach \i in {3,...,9}
\draw[dotted] (0,\i)--(\i,\i);
\foreach \i in {2,...,9}
\draw[dotted] (\i,9)--(\i,\i);
\draw[dotted] (0,2)--(0,9)
(1,2)--(1,9);
\draw[dotted] (0,2)--(2,2)--(9,9);
\draw[very thick] (4,7)--(6,7)
(1,4)--(2,4)--(2,6);
\draw[thick] (1.5,4.5)--(4.5, 4.5)--(4.5, 8.5);
\draw[thick, color=blue] (.5, 5.5)--(5.5, 5.5)--(5.5, 7.5);
\filldraw (1.5, 4.5) circle (6pt)
(4.5, 8.5) circle (6pt);
\filldraw[color=blue]
(.5, 5.5) circle (6pt)
(5.5, 7.5) circle (6pt);
\draw [very thick, color=red, fill=white] (0.5, 4.5) circle (8pt);
\node[] at (4.5, 9.4) {\small \( a+1 \)};
\node[] at (5.8, 8.3) {\small \( b+1 \)};
\node[] at (.5, 6.2) {\small \( b \)};
\node[] at (1.5, 5.2) {\small \( a \)};
\node[] at (1.5, 9.5) {\small \( i \)};
\node[] at (-.5, 5.5) {\small \( j \)};
\node[] at (-.9, 4.5) {\small \( j-1 \)};
\node[] at (3.5, 1.2) {\( \fc_{\gamma_0}(\varphi(P)) \)};
\draw[very thick, color=red] (4.2, 7.2)--(4.8, 7.8) (4.2, 7.8)--(4.8, 7.2);
\end{tikzpicture}
  \caption{If \( a<b \), then \( \fc_{\gamma_0}(P) = \fc_{\gamma_1}(P) = \fc_{\gamma_0}(\varphi(P))-1 \).}
  \label{fig:a<b2}
\end{figure}
\begin{figure}
  \centering
    \begin{tikzpicture}[scale=.4]
\foreach \i in {3,...,9}
\draw[dotted] (0,\i)--(\i,\i);
\foreach \i in {2,...,9}
\draw[dotted] (\i,9)--(\i,\i);
\draw[dotted] (0,2)--(0,9)
(1,2)--(1,9);
\draw[dotted] (0,2)--(2,2)--(9,9);
\draw[very thick] (4,7)--(6,7)
(1,4)--(2,4)--(2,6);
\draw[thick] (1.5,5.5)--(5.5, 5.5)--(5.5, 8.5);
\draw[thick, color=blue] (.5, 4.5)--(4.5, 4.5)--(4.5, 7.5);
\filldraw (1.5, 5.5) circle (6pt)
(5.5, 8.5) circle (6pt);
\filldraw[color=blue]
(.5, 4.5) circle (6pt)
(4.5, 7.5) circle (6pt);
\draw[very thick, color=red] (1.2, 4.2)--(1.8, 4.8)
(1.2, 4.8)--(1.8, 4.2);
\node[] at (4.3, 8.3) {\small \( a+1 \)};
\node[] at (5.5, 9.4) {\small \( a+1 \)};
\node[] at (.5, 5.2) {\small \( a \)};
\node[] at (1.5, 6.2) {\small \( a \)};
\node[] at (1.5, 9.5) {\small \( i \)};
\node[] at (-.5, 5.5) {\small \( j \)};
\node[] at (-.9, 4.5) {\small \( j-1 \)};
\node[] at (3.5, 1.2) {\( \fc_{\gamma_0}(P) \)};
\end{tikzpicture}
    \begin{tikzpicture}[scale=.4]
\foreach \i in {3,...,9}
\draw[dotted] (0,\i)--(\i,\i);
\foreach \i in {2,...,9}
\draw[dotted] (\i,9)--(\i,\i);
\draw[dotted] (0,2)--(0,9)
(1,2)--(1,9);
\draw[dotted] (0,2)--(2,2)--(9,9);
\draw[very thick] (4,7)--(6,7)
(1,4)--(1,5)--(2,5)--(2,6);
\draw[thick] (1.5,5.5)--(5.5, 5.5)--(5.5, 8.5);
\draw[thick, color=blue] (.5, 4.5)--(4.5, 4.5)--(4.5, 7.5);
\filldraw (1.5, 5.5) circle (6pt)
(5.5, 8.5) circle (6pt);
\filldraw[color=blue]
(.5, 4.5) circle (6pt)
(4.5, 7.5) circle (6pt);
\node[] at (4.3, 8.3) {\small \( a+1 \)};
\node[] at (5.5, 9.4) {\small \( a+1 \)};
\node[] at (.5, 5.2) {\small \( a \)};
\node[] at (1.5, 6.2) {\small \( a \)};
\node[] at (1.5, 9.5) {\small \( i \)};
\node[] at (-.5, 5.5) {\small \( j \)};
\node[] at (-.9, 4.5) {\small \( j-1 \)};
\node[] at (3.5, 1.2) {\( \fc_{\gamma_1}(P) \)};
\end{tikzpicture}
    \begin{tikzpicture}[scale=.4]
\foreach \i in {3,...,9}
\draw[dotted] (0,\i)--(\i,\i);
\foreach \i in {2,...,9}
\draw[dotted] (\i,9)--(\i,\i);
\draw[dotted] (0,2)--(0,9)
(1,2)--(1,9);
\draw[dotted] (0,2)--(2,2)--(9,9);
\draw[very thick] (4,7)--(6,7)
(1,4)--(2,4)--(2,6);
\draw[thick] (1.5,4.5)--(4.5, 4.5)
(4.5,4.5)--(4.5, 7.5);
\draw[thick, color=blue] (.5, 5.5)--(5.5, 5.5)
(5.5, 5.5)--(5.5, 8.5);
\filldraw (1.5, 4.5) circle (6pt)
(4.5, 7.5) circle (6pt);
\filldraw[color=blue]
(.5, 5.5) circle (6pt)
(5.5, 8.5) circle (6pt);
\draw [very thick, color=red, fill=white] (0.5, 4.5) circle (8pt);
\node[] at (4.3, 8.3) {\small \( a+1 \)};
\node[] at (5.6, 9.4) {\small \( a+1 \)};
\node[] at (.5, 6.2) {\small \( a \)};
\node[] at (1.5, 5.2) {\small \( a \)};
\node[] at (1.5, 9.5) {\small \( i \)};
\node[] at (-.5, 5.5) {\small \( j \)};
\node[] at (-.9, 4.5) {\small \( j-1 \)};
\node[] at (3.5, 1.2) {\( \fc_{\gamma_0}(\varphi(P)) \)};
\end{tikzpicture}
  \caption{If \( a=b \), then \( \fc_{\gamma_0}(P) = \fc_{\gamma_1}(P) = \fc_{\gamma_0}(\varphi(P))-1 \).}
  \label{fig:a=b2}
\end{figure}

Now, we prove \eqref{eqn:ML2id1}. Note that
\( A_0(\gamma_0) = A_0 (\gamma_1) = A_0 (\gamma_2) \). Given
\(P\in A_0 (\gamma_2) \), let \( a=c_i(P) \), \( b_1=r_j (P) \), and
\( b_2=r_{j-1}(P) \). We define a map
\( \vartheta:A_{0}(\gamma_2)\rightarrow A_{0}(\gamma_2) \) by
\[
  \vartheta(P) =
  \begin{cases}
    P, & \text{ if } a\ge b_1, b_2 \text{ or } a<b_1, b_2,\\
    (R_{i,i+1}\circ C_{i,i+1})(P), & \text{ if } b_1< a\le b_2 \text{ or } b_2< a\le b_1.
  \end{cases}
\]
It is easy check that \( \vartheta \) is an involution. Hence, to prove
\eqref{eqn:ML2id1}, it suffices to show that
for each \( P\in A_0 (\gamma_2) \), we have
\[
  q^{\fc_{\gamma_1}(P)} + q^{\fc_{\gamma_1}(P)+1} = q^{\fc_{\gamma_0}(P)} + q^{\fc_{\gamma_2}(\vartheta(P))+1}.
\]
This can be verified by considering each case:
\begin{itemize}
\item If \( a\ge b_1, b_2 \), we have \( \fc_{\gamma_0}(P) = \fc_{\gamma_1}(P)\)
  and \( \fc_{\gamma_2}(\vartheta(P))+1 = \fc_{\gamma_1}(P)+1\).
\item If \( a<b_1, b_2 \), we have \( \fc_{\gamma_0}(P) = \fc_{\gamma_1}(P)+1\)
  and \( \fc_{\gamma_2}(\vartheta(P))+1 = \fc_{\gamma_1}(P)\).
\item If \( b_1< a\le b_2 \), we have \( \fc_{\gamma_0}(P) = \fc_{\gamma_1}(P)+1\)
  and \( \fc_{\gamma_2}(\vartheta(P))+1 = \fc_{\gamma_1}(P)\).
\item If \( b_2< a\le b_1 \), we have \( \fc_{\gamma_0}(P) = \fc_{\gamma_1}(P)\)
  and \( \fc_{\gamma_2}(\vartheta(P))+1 = \fc_{\gamma_1}(P)+1\).
  \end{itemize}
  Therefore, we obtain \eqref{eqn:ML2id1}, completing the proof.
\end{proof}

By \Cref{lem:ML-1,lem:ML-2}, we obtain \Cref{pro:1}.

\subsection{Proof of the multiplicativity}
\label{sec:proof-mult}

In this subsection, we prove \Cref{prop:multiplicativity}, which states that
for  \( \gamma\in\D_n \) and \( \tilde{\gamma}=\gamma+N^kE^k \), 
\begin{equation}\label{eq:19}
  Y_{\tilde{\gamma}}(\vx;q) = Y_{\gamma}(\vx;q) Y_{N^kE^k}(\vx;q).
\end{equation}
To prove \eqref{eq:19}, we first reformulate it.

Fix a partition \( \nu\vdash n+k \). Let \( L \) and \( R \) be the
 coefficients of \(P_{\nu}({\bf x};q)\) in the left-hand side and
in the right-hand side of \eqref{eq:19}, respectively. Then, it
suffices to show that \( L=R \). By definition,
\begin{equation}\label{eq:7}
  L =q^{\area(\gamma)+\binom{k}{2} -n(\nu)} r_{\tilde{\gamma},\nu}(q)
  \prod_{i\ge1}[m_i (\nu)]_q !.
\end{equation}
By \Cref{pro:5} with \( P_{(1^k)}(\vx;q) = e_{k}(\vx) \), we
have
\[
  Y_{\gamma}(\vx;q) Y_{N^kE^k}(\vx;q)
  =\sum_{\mu\vdash n} q^{\area(\gamma) -n(\mu)} r_{\gamma,\mu}(q)
  P_{\mu}({\bf x};q)  P_{(1^k)}({\bf x};q) [k]_q ! \prod_{i\ge1}[m_i (\mu)]_q ! .
\]
Then, by the Pieri rule \cite[(3.2), p.~215]{Mac} of the Hall--Littlewood polynomials 
\[
  P_\mu(\vx ; q) P_{(1^k)}(\vx ; q)=\sum_{\substack{\rho\vdash n+k \\ \rho/\mu: \text{ vertical strip}}}  P_\rho(\vx ; q)
  \prod_{i \ge 1}
  \qbinom{\rho_i'-\rho_{i+1}'}{\rho_i'-\mu_i'},
\]
we obtain
\begin{equation}\label{eq:8}
  R =\sum_{\substack{\mu\vdash n \\ \nu/\mu: \text{ vertical strip}}} q^{\area(\gamma) -n(\mu)} r_{\gamma,\mu}(q)
  [k]_q !  \prod_{i\ge1}[m_i (\mu)]_q ! \qbinom{\nu_i'-\nu_{i+1}'}{\nu_i'-\mu_i'} .
\end{equation}

By \eqref{eq:7} and \eqref{eq:8}, to prove
\Cref{prop:multiplicativity}, it suffices to show that
\begin{equation}\label{eq:9}
  r_{\tilde{\gamma},\nu}(q) =
  \sum_{\substack{\mu\vdash n \\ \nu/\mu: \text{ vertical strip}}} q^{n(\nu)-n(\mu)-\binom{k}{2}} r_{\gamma,\mu}(q)
  [k]_q !  \prod_{i\ge1} \frac{[m_i (\mu)]_q !}{[m_i (\nu)]_q !} 
      \qbinom{\nu_i'-\nu_{i+1}'}{\nu_i'-\mu_i'} .
\end{equation}
Observe that if \( \nu/\mu \) is a vertical strip, both
\( \mu_i'-\nu_{i+1}' \) and \( \nu_{i+1}'-\mu'_{i+1} \) are
nonnegative. Since \( m_i(\nu) = \nu_i'-\nu_{i+1}' \) and
\begin{equation}\label{eq:22}
  m_i(\mu) = \mu_i'-\mu_{i+1}' = (\mu_i'-\nu_{i+1}') +
  (\nu_{i+1}'-\mu'_{i+1}),
\end{equation}
the identity \eqref{eq:9} (and hence also
\Cref{prop:multiplicativity}) is equivalent to the following
proposition.

\begin{prop}\label{pro:key}
  Let \( \gamma\in\D_n \), \( \tilde{\gamma}=\gamma+N^kE^k \), and
  \( \nu\vdash n+k \). Then we have
\begin{equation}\label{eq:10}
        r_{\tilde{\gamma},\nu}(q) =
   \sum_{\substack{\mu\vdash n \\ \nu/\mu: \text{ vertical strip}}}  q^{n(\nu)-n(\mu)-\binom{k}{2}} r_{\gamma,\mu}(q)
     \frac{[k]_q !}{[\nu_1'-\mu_1']_q!} 
       \prod_{i\ge 1}  \qbinom{m_i(\mu)}{\nu_{i+1}'-\mu_{i+1}'} .
  \end{equation}
\end{prop}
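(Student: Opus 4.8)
The plan is to prove \eqref{eq:10} by a weight-preserving bijection that peels off the appended block $N^kE^k$. First I would record the shape of $\tilde{\gamma}$: since $\tilde{\gamma}$ has column heights $(m_1,\dots,m_n,n+k,\dots,n+k)$, its associated partition is $\tilde{\lambda}=\lambda\cup R$, where $R$ is the full $k\times n$ rectangle occupying the new rows $n+1,\dots,n+k$ across columns $1,\dots,n$, while columns $n+1,\dots,n+k$ contain no cells of $\tilde{\lambda}$. The crucial consequence is that no rook of any placement in $\LRP(\tilde{\gamma})$ can have its left endpoint in a column $>n$; hence in each linked rook only the final rook may land in a new row, and such a rook necessarily sits directly on top of a diagonal endpoint of a linked rook living in $\lambda$, i.e.\ it \emph{extends} that linked rook into the new block.

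Next I would set up the bijection. Restricting $\tilde{P}\in\LRP(\tilde{\gamma},\nu)$ to $\lambda$ (equivalently, truncating each linked rook that terminates in a new row and deleting the isolated length-$1$ pieces on new diagonal cells) yields a placement $P\in\LRP(\gamma,\mu)$. I would show that the admissible $\mu$ are exactly those with $\nu/\mu$ a vertical strip, and that the fibre over a fixed $P$ of type $\mu$ is parametrized by the following extension data: for each $i\ge1$ choose $s_i:=\nu_{i+1}'-\mu_{i+1}'$ of the $m_i(\mu)$ length-$i$ extended linked rooks of $\ext(P)$ to extend, and assign $t:=\sum_{i\ge1}s_i$ of the $k$ new rows bijectively to the chosen linked rooks, leaving the remaining $s_0:=\nu_1'-\mu_1'$ new rows isolated. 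The identity \eqref{eq:22} guarantees consistency and that the type of $\tilde{P}$ is indeed $\nu$, while legality of every assignment follows from the fact that the terminal column of a linked rook is unoccupied, so any new row may be attached there.

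I would then reduce the statistic. Extending a chain replaces its extended-row cell (whose rank equals the chain length) by a rook of the \emph{same} rank in the same column, and it introduces no rook in any row $\le n$; therefore all ranks of extended rooks in columns and rows $\le n$ are unchanged, and a cell of $\lambda$ is free for $\tilde{P}$ if and only if it is free for $P$. Consequently $\fc_{\tilde{\gamma}}(\tilde{P})=\fc_{\gamma}(P)+f(\tilde{P})$, where $f(\tilde{P})$ counts the free cells of $\tilde{P}$ inside $R$. Summing $q^{\fc_{\tilde{\gamma}}(\tilde{P})}$ over the fibre factors as $q^{\fc_{\gamma}(P)}$ times the block sum $\sum q^{f(\tilde{P})}$, so that \eqref{eq:10} reduces to proving, for each fixed $P$ of type $\mu$, that this block sum equals $q^{\,n(\nu)-n(\mu)-\binom{k}{2}}\,\dfrac{[k]_q!}{[s_0]_q!}\,\prod_{i\ge1}\qbinom{m_i(\mu)}{s_i}$.

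The main obstacle is exactly this block computation. Here I would analyze which cells of $R$ are free: a cell $(i,j)$ with $i\le n<j$ can be free only when column $i$ is terminal in $P$ and its upper extended rook lies above row $j$, and then the fc-condition compares $a$, the length of the chain terminating at column $i$, with $b$, the rank of the leftmost rook in row $j$ (which is $0$ for an isolated row and the length of the chain with terminal column $d$ for a row extending it). I expect $\sum q^{f(\tilde{P})}$ to factor into three interlocking but separable sources: the choice, within each block of equal-length chains, of which $s_i$ to extend contributes the Gaussian binomial $\qbinom{m_i(\mu)}{s_i}$ (its exponent recording inversions between extended and non-extended chains of the same length); the relative vertical order in which extension and isolated slots fill the $k$ new rows contributes $[k]_q!/[s_0]_q!$; and the minimum value of $f$ over the fibre supplies the base power $q^{\,n(\nu)-n(\mu)-\binom{k}{2}}$, using $n(\lambda)=\sum_j\binom{\lambda_j'}{2}$ together with $\sum_j(\nu_j'-\mu_j')=k$. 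Disentangling these three contributions—showing that the conditions coupling column order, chain length, and row order nevertheless sum to this clean product—is the delicate part, which I would carry out by inserting the new rows one at a time from top to bottom and tracking how each insertion increments $f$.
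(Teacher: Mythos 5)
Your plan is correct and is essentially the paper's own proof: your restriction bijection together with the splitting of $\fc_{\tilde{\gamma}}$ into its part on rows $\le n$ plus the block part is exactly \Cref{lem:4} (where the extension data is recorded as a word $w$ and the block statistic splits as $f_\mu(\set(w))+f(w)$), and the block enumeration you defer as the ``delicate part'' is precisely \Cref{lem:3}, which confirms your predicted factorization --- the cross-length free cells contribute the constant exponent $n(\nu)-n(\mu)-\binom{k}{2}$ (via the algebraic identity in \Cref{lem:2}), the same-length choices contribute $\prod_{i\ge1}\qbinom{m_i(\mu)}{\nu'_{i+1}-\mu'_{i+1}}$, and the row orderings contribute $[k]_q!/[\nu'_1-\mu'_1]_q!$. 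So your outline matches the paper's proof step for step, with only the routine execution of that enumeration (done in the paper by standard $q$-binomial facts) left to write out.
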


The rest of this subsection is devoted to proving \Cref{pro:key}. We
begin with a simple lemma.

\begin{lem}\label{lem:2}
  Let \( (a_1,a_2,\dots) \) and \( (b_1,b_2,\dots) \) be any sequences
  of integers with finitely many nonzero elements.
  Then
  \begin{equation}\label{eq:11}
    \sum_{1\le i\le j}  (a_i-b_i)(b_j-a_{j+1})
    = \sum_{i\ge1}  \left( \binom{a_i}{2} - \binom{b_i}{2} \right)
    - \binom{\sum_{i\ge1} (a_i-b_i)}{2}.
  \end{equation}
\end{lem}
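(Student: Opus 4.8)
The plan is to reduce both sides of \eqref{eq:11} to a common closed form by introducing the difference sequence $d_i = a_i - b_i$ together with its partial sums. Since only finitely many $a_i$ and $b_i$ are nonzero, every sum in sight is finite, so no convergence issue arises. I set $S = \sum_{i\ge 1} d_i$ and $S_j = \sum_{i=1}^{j} d_i$, with the convention $S_0 = 0$, so that $S_j - S_{j-1} = d_j$. The target common form will be $\sum_{i\ge 1} b_i d_i - \sum_{1\le i<j} d_i d_j$.

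First I would rewrite the right-hand side. Using $\binom{a_i}{2} - \binom{b_i}{2} = \tfrac12 d_i(a_i + b_i - 1)$ and $\binom{S}{2} = \tfrac12(S^2 - S)$, together with $\sum_{i\ge 1} d_i = S$, the right-hand side collapses to $\tfrac12 \sum_{i\ge 1} d_i(a_i + b_i) - \tfrac12 S^2$. A short computation using $a_i + b_i = 2b_i + d_i$ and $\sum_{i<j} d_i d_j = \tfrac12\bigl(S^2 - \sum_i d_i^2\bigr)$ then puts this exactly in the form
\[
  \sum_{i\ge 1} b_i d_i - \sum_{1\le i<j} d_i d_j.
\]

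Next I would evaluate the left-hand side by summation by parts. Writing $\sum_{1\le i\le j}(a_i - b_i)(b_j - a_{j+1}) = \sum_{j\ge 1} (b_j - a_{j+1}) S_j$ and reindexing the $a_{j+1}$ contribution via $k = j+1$ (the $k=1$ term vanishes because $S_0 = 0$), the left-hand side becomes $\sum_{j\ge 1} (b_j S_j - a_j S_{j-1})$. Substituting $S_j = S_{j-1} + d_j$ gives $b_j S_j - a_j S_{j-1} = (b_j - a_j)S_{j-1} + b_j d_j = b_j d_j - d_j S_{j-1}$, and since $\sum_{j\ge 1} d_j S_{j-1} = \sum_{1\le i<j} d_i d_j$, the left-hand side equals the same expression $\sum_i b_i d_i - \sum_{i<j} d_i d_j$, establishing the identity.

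The only delicate points are purely bookkeeping: the index shift between $a_{j+1}$ and $a_j$, handled by the reindexing above, and keeping the boundary term in the summation by parts straight. I expect no genuine obstacle, since the identity is an elementary algebraic rearrangement; the main care is simply to keep the two reductions consistent so that both sides land on precisely $\sum_{i\ge 1} b_i d_i - \sum_{1\le i<j} d_i d_j$.
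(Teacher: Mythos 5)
Your proof is correct, and it takes a genuinely different route from the paper. The paper's argument is a pure coefficient comparison: it regards both sides of \eqref{eq:11} as polynomials in the variables $a_1,a_2,\dots,b_1,b_2,\dots$ (with $\binom{c}{2}=c(c-1)/2$) and verifies that the coefficient of every degree-two monomial $uv$ agrees, tabulating $[a_ia_j]$, $[b_ib_j]$, $[a_ib_j]$, etc., on each side (the linear terms cancel, which is absorbed into the ``straightforward to check''). You instead perform an explicit reduction: substituting $d_i=a_i-b_i$, using the partial sums $S_j$ with a summation-by-parts/reindexing step on the left, and elementary binomial algebra on the right, you bring both sides to the common closed form $\sum_i b_i d_i - \sum_{i<j} d_i d_j$. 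All of your intermediate identities check out, including the boundary handling $S_0=0$ in the reindexing and the use of $\sum_{i<j}d_id_j=\tfrac12\bigl(S^2-\sum_i d_i^2\bigr)$, and finiteness of all sums is correctly noted. The paper's method has the advantage of being almost computation-free to verify (a short table of coefficients), while yours is more of a derivation: it exhibits a meaningful intermediate quantity to which both sides reduce, which makes the identity look less like an accident, at the cost of more bookkeeping with indices.
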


\begin{proof}
  Let \( L \) and \( R \) be the left-hand side and the right-hand
  side of \eqref{eq:11}, respectively. It suffices to show that
  \( L=R \) as formal power series in the variables in
  \( X = \{a_1,a_2,\dots, b_1,b_2,\dots\} \), where we consider each
  binomial coefficient \( \binom{c}{2} = c(c-1)/2 \) as a polynomial
  in these variables. Then, it is straightforward to check that
  \( [uv]L = [uv]R \) for any \( u,v\in X \), where \( [uv]L \) is the
  coefficient of \( uv \) in \( L \). More precisely, for every
  \( i \) and \( j \) with \( 1\le i<j \), we have
  \begin{align*}
  [a_i^2] L  &= [a_i^2] R =0,\\
  [a_ia_j] L  &= [a_ia_j] R =-1,\\
  [b_i^2] L  &= [b_i^2] R =-1,\\
  [b_ib_j] L  &= [b_ib_j] R =-1,\\
  [a_ib_i] L  &= [a_ib_i] R =1,\\
  [a_ib_j] L  &= [a_ib_j] R =1,\\
  [a_jb_i] L  &= [a_jb_i] R =1.
  \end{align*}
  Hence, \( L=R \) as formal power series.
\end{proof}

We will find a combinatorial interpretation for the summand in
\eqref{eq:10}. To this end, we introduce some definitions.

We use the letters \( i_a \), for integers \( i\ge0 \) and
\( a\ge1 \), with the ordering given by \( i_1>i_2 > \cdots \), and
\( i_a<j_b \) for any \( i<j \) and \( a,b\ge1 \). For a partition
\( \mu \), let
  \[
    M_\mu := \{i_a: 1\le i\le \mu_1, 1\le a\le m_i(\mu)\}.
  \]
  For a word \( w=w_1 \cdots w_k \), we write
  \( \set(w) := \{w_1,\dots,w_k\} \).

\begin{defn}\label{def:4}
  Suppose that \( \nu\vdash n+k \) and \( \mu\vdash n \) are partitions such that
  \( \nu/\mu \) is a vertical strip. 
  We denote by \( W(\nu,\mu) \) the set of words
  \( w=w_1 \cdots w_k \) on the alphabet
  \( M_\mu\cup \{0_1,0_2,\dots\} \) satisfying the following
  conditions:
  \begin{enumerate}
  \item there are no repeated letters in \( w \), 
  \item \( |\{i_a\in \set(w): a\ge1\}| = \nu'_{i+1}-\mu'_{i+1} \) for all \( i\ge0 \),
  \item the subword of \( w \) consisting of the letters \( 0_a \) for
    \( a\ge1 \) is \( 0_10_2 \cdots 0_{\nu'_1-\mu'_1} \). 
  \end{enumerate}
  For a word \( w=w_1 \cdots w_k\in W(\nu,\mu) \) and a set
  \( B\subseteq M_\mu \cup \{0_1,0_2,\dots\} \), we define
  \begin{align*}
  f(w)  &= |\{(w_i,w_j): 1\le i<j\le k,\, w_i<w_j\}|, \\
  f_\mu(B)  &= |\{(i_a,j_b): i\ge0,\, j\ge1,\, i_a\in B,\,  j_b\in M_\mu\setminus B,\, i_a<j_b\}|.
  \end{align*}
\end{defn}

The following lemma shows that the summand in \eqref{eq:10} without
the factor \( r_{\gamma,\mu}(q) \) is a generating function for the
words in \( W(\nu,\mu) \).

\begin{lem}\label{lem:3}
  Let \( \nu\vdash n+k \) and \( \mu\vdash n \) be partitions such
  that \( \nu/\mu \) is a vertical strip. Then
  \[
    \sum_{w\in W(\nu,\mu)} q^{f_\mu(\set(w))+f(w)}
    =q^{n(\nu)-n(\mu)-\binom{k}{2}} \frac{[k]_q !}{[\nu_1'-\mu_1']_q!} 
       \prod_{i\ge 1}  \qbinom{m_i(\mu)}{\nu_{i+1}'-\mu_{i+1}'}.
  \]
\end{lem}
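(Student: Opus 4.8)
The plan is to group the words in $W(\nu,\mu)$ according to their underlying letter set and then factor the resulting sum. Write $d_i = \nu'_{i+1}-\mu'_{i+1}$ for $i\ge0$, so that condition (2) of \Cref{def:4} fixes the number of value-$i$ letters in $\set(w)$ to be $d_i$, and note that $\sum_{i\ge0}d_i=k$. First I would split off the choice of $B=\set(w)$:
\[
\sum_{w\in W(\nu,\mu)} q^{f_\mu(\set(w))+f(w)} = \sum_{B} q^{f_\mu(B)} \sum_{\substack{w\in W(\nu,\mu)\\ \set(w)=B}} q^{f(w)},
\]
where $B$ ranges over the admissible letter sets, i.e.\ those containing $0_1,\dots,0_{d_0}$ together with a $d_i$-element subset of $\{i_1,\dots,i_{m_i(\mu)}\}$ for each $i\ge1$. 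The key observation is that the inner sum is independent of $B$. Indeed, condition (3) forces the zero letters to appear in the fixed (decreasing) order $0_1,\dots,0_{d_0}$, and since these are the $d_0$ smallest letters, fixing their relative order is equivalent to regarding them as a single repeated smallest symbol; under this identification $f(w)$ becomes the number of coinversions of a word of content $(d_0,1^{\,k-d_0})$. A standard Gaussian multinomial computation then gives the inner sum as $\frac{[k]_q!}{[d_0]_q!}$, independent of $B$.

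It then remains to evaluate $\sum_B q^{f_\mu(B)}$. Here I would split the pairs counted by $f_\mu(B)$ according to the value $i$ of their first entry. The pairs with $i=0$ automatically satisfy $0_a<j_b$, and there are $d_0\big(\ell(\mu)-\sum_{i\ge1}d_i\big)$ of them; the pairs with $i<j$ (both positive) are again automatic and number $\sum_{1\le i<j} d_i\big(m_j(\mu)-d_j\big)$; both of these are constants depending only on the counts $d_i$ and $m_i(\mu)$. The remaining pairs have equal positive value $i=j$, and for each $i$ they contribute $\#\{(a,b): a\in S_i,\ b\notin S_i,\ a>b\}$, where $S_i$ is the chosen $d_i$-subset of $\{i_1,\dots,i_{m_i(\mu)}\}$. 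Summing $q$ to this statistic over all such $S_i$ yields exactly the Gaussian binomial $\qbinom{m_i(\mu)}{d_i}$. Consequently
\[
\sum_B q^{f_\mu(B)} = q^{\,c}\prod_{i\ge1}\qbinom{m_i(\mu)}{\nu'_{i+1}-\mu'_{i+1}},\qquad c=d_0\Big(\ell(\mu)-\textstyle\sum_{i\ge1}d_i\Big)+\sum_{1\le i<j} d_i\big(m_j(\mu)-d_j\big).
\]

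Finally I would identify the exponent $c$ with $n(\nu)-n(\mu)-\binom{k}{2}$. Using $m_j(\mu)-d_j=\mu'_j-\nu'_{j+1}$, $\ell(\mu)=\mu'_1$, $\sum_{i\ge1}d_i=k-d_0$, and the identity $\sum_{j\ge1}(\mu'_j-\nu'_{j+1})=\nu'_1-k$, a short rearrangement rewrites $c$ as $\sum_{1\le i\le j}(\nu'_i-\mu'_i)(\mu'_j-\nu'_{j+1})$. This is precisely the left-hand side of \Cref{lem:2} with $a_i=\nu'_i$ and $b_i=\mu'_i$; since $\sum_i(\nu'_i-\mu'_i)=|\nu|-|\mu|=k$ and $n(\lambda)=\sum_i\binom{\lambda'_i}{2}$, the lemma evaluates it as $n(\nu)-n(\mu)-\binom{k}{2}$, completing the identity. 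I expect the main obstacle to be the bookkeeping in this last step: absorbing the zero-letter term $d_0\big(\ell(\mu)-\sum_{i\ge1}d_i\big)$ into the double sum hinges on the nontrivial relation $\sum_j(\mu'_j-\nu'_{j+1})=\nu'_1-k$, and reconciling the resulting double sum with $n(\nu)-n(\mu)-\binom{k}{2}$ is exactly the role played by the otherwise unmotivated \Cref{lem:2}.
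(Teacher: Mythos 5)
Your proposal is correct and follows essentially the same route as the paper's proof: the same split over letter sets $B=\set(w)$, the same evaluation of the inner sum as $[k]_q!/[\nu_1'-\mu_1']_q!$ via the standard multiset-inversion generating function, the same decomposition of $f_\mu(B)$ into automatic cross-value pairs (a constant) plus equal-value pairs (yielding the Gaussian binomials), and the same invocation of \Cref{lem:2} with $a_i=\nu_i'$, $b_i=\mu_i'$ to identify the exponent with $n(\nu)-n(\mu)-\binom{k}{2}$. The only cosmetic difference is that you treat the $i=0$ pairs separately before recombining, whereas the paper folds them into the single sum $\sum_{0\le i<j}(\nu_{i+1}'-\mu_{i+1}')(\mu_j'-\nu_{j+1}')$ from the start.
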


\begin{proof}
  Let \( \mathcal{B} = \{\set(w):w\in W(\nu,\mu)\} \). Equivalently,
  \( \mathcal{B} \) is the set of all subsets
  \( B\subseteq M_\mu\cup \{0_1,0_2,\dots\} \) such that 
  \begin{equation}\label{eq:6}
    \mbox{\( |\{i_a\in B: a\ge1 \}| = \nu'_{i+1}-\mu'_{i+1} \) \quad for all \( i\ge0 \).}
  \end{equation}
  By definition, we have
  \[
    \sum_{w\in W(\nu,\mu)} q^{f_\mu(\set(w))+f(w)}
    = \sum_{B\in\mathcal{B}} q^{f_\mu(B)}
    \sum_{\substack{w\in W(\nu,\mu)\\ \set(w)=B}} q^{f(w)}.
  \]

  We claim that
\begin{equation}\label{eq:15}
  \sum_{B\in \mathcal{B}}  q^{f_\mu(B)} = q^{\sum_{1\le i\le j} (\nu'_i-\mu'_i) (\mu'_j-\nu'_{j+1})}
  \prod_{i\ge1} \qbinom{m_i(\mu)}{\nu'_{i+1}-\mu'_{i+1}},
\end{equation}
and that for any \( B\in \mathcal{B} \),
\begin{equation}
  \label{eq:13}
  \sum_{\substack{w\in W(\nu,\mu)\\ \set(w)=B}}  q^{f(w)} = \frac{[k]_q!}{[\nu'_1-\mu'_1]_q!}.
\end{equation}
By \Cref{lem:2}, the exponent of the power of \( q \) in the
right-hand side of \eqref{eq:15} is
\[
  \sum_{1\le i\le j} (\nu'_i-\mu'_i) (\mu'_j-\nu'_{j+1})
    = \sum_{i\ge1}  \left( \binom{\nu'_i}{2} - \binom{\mu'_i}{2} \right)
    - \binom{\sum_{i\ge1} (\nu'_i-\mu'_i)}{2}
  = n(\nu)-n(\mu)-\binom{k}{2}.
\]
Thus, it suffices to prove \eqref{eq:15} and \eqref{eq:13}.

To prove \eqref{eq:15}, suppose \( B\in\mathcal{B} \).
Then, by definition, \( f_\mu(B) \) is equal to
\[
   |\{(i_a,j_b): 0\le i<j,\,  i_a\in  B,\,  j_b\in M_\mu\setminus B\}|
  +|\{(i_a,i_b): i\ge1,\,  i_a\in B,\,  i_b\in M_\mu\setminus B,\,  a>b\}|.
\]
Note that by \eqref{eq:22} and \eqref{eq:6}, we have
\begin{equation}\label{eq:23}
  |\{a: i_a\in M_\mu \setminus B\}| =  \mu'_i-\nu'_{i+1}
  \quad \mbox{for all \( i\ge1 \)}.
\end{equation}
Therefore, by \eqref{eq:6} and \eqref{eq:23}, we always have
\begin{equation}\label{eq:14}
  |\{(i_a,j_b): 0\le i<j,\, i_a\in  B,\,  j_b\in M_\mu\setminus B\}|
= \sum_{0\le i< j} (\nu'_{i+1}-\mu'_{i+1})(\mu'_j-\nu'_{j+1}) .
\end{equation}
By the well-known property of \( q \)-binomial coefficients
\cite[1.7.1~Proposition]{EC1}, we have
\[
  \sum_{B\in\mathcal{B}} q^{|\{(i_a,i_b)\, :\,  i\ge1,\,  i_a\in B,\,  i_b\in M_\mu\setminus B,\,  a>b\}|}
  = \prod_{i\ge1} \qbinom{m_i(\mu)}{\nu'_{i+1}-\mu'_{i+1}}.
\] 
By combining these results, we obtain \eqref{eq:15}.

Suppose now that \( w\in W(\nu,\mu) \) and \( \set(w)= B \) for some
fixed \( B\in \mathcal{B} \). Then \( w \) is a permutation of the
elements in \( B \) such that the letters \( 0_a \) for \( a\ge1 \)
are arranged as \( 0_10_2 \cdots 0_{\nu'_1-\mu'_1} \). Thus,
\eqref{eq:13} also follows from \cite[1.7.1~Proposition]{EC1}, which
completes the proof.
\end{proof}

The final ingredient of the proof of \Cref{pro:key} is the following
lemma.

\begin{lem}\label{lem:4}
  Let \( \gamma\in\D_n \), \( \tilde{\gamma}=\gamma+N^kE^k \), and
  \( \nu\vdash n+k \). There is a bijection
  \[
    \phi:\LRP(\tilde{\gamma},\nu) \to \bigcup_{\substack{\mu\vdash n \\ \nu/\mu: \text{ vertical strip}}}
    \LRP(\gamma,\mu) \times W(\nu,\mu)
  \]
  such that if \( \phi(P) = (Q,w)\in \LRP(\gamma,\mu) \times W(\nu,\mu) \), then
\begin{equation}\label{eq:12}
  \fc_{\tilde{\gamma}}(P) = \fc_{\gamma}(Q) + f_\mu(\set(w)) + f(w).
\end{equation}
\end{lem}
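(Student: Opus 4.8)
The plan is to realize \(\phi\) by \emph{truncating the parts of the placement that protrude into the \(k\) new rows} and to let \(w\) bookkeep how this protrusion occurs. First I would record the shape of \(\tilde\gamma=\gamma+N^kE^k\): its rook region is the region of \(\gamma\) together with the full \(n\times k\) rectangle of cells \((i,j)\) with \(i\le n<j\le n+k\), while columns \(n+1,\dots,n+k\) contain no cells. Hence every rook of a placement on \(\tilde\gamma\) sits in a column \(\le n\), so each linked rook is a chain of \(\gamma\) that either stays in rows \(\le n\) or escapes, in its last rook, into exactly one new row. I define \(Q\) by deleting the rooks in rows \(>n\) (truncating each escaping chain at its last old rook), so \(Q\in\LRP(\gamma,\mu)\) with \(\mu:=\operatorname{type}(Q)\vdash n\). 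For the word, read the new rows \(n+1,\dots,n+k\) from bottom to top: an empty new row yields a letter \(0_a\), while a new row carrying the escaping rook of a chain that is a length-\(i\) extended linked rook of \(Q\) yields \(i_a\), where \(a\) indexes the length-\(i\) extended linked rooks of \(Q\) \emph{by increasing top column} (the column of the extended-row cell).

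Next I would check this lands in the stated target and is invertible. Since an escaping rook lengthens its chain by one and an empty row creates a new part of size \(1\), passing from \(\mu\) to \(\nu\) increments \(d_i\le m_i(\mu)\) parts of each size \(i\) and adjoins parts of size \(1\); the bound \(d_i\le m_i(\mu)\) is exactly the assertion that \(\nu/\mu\) is a vertical strip and that the letter counts of \(w\) equal \(\nu'_{i+1}-\mu'_{i+1}\), so \(w\in W(\nu,\mu)\) (condition (1) of \Cref{def:4}, no repeated letters, records that each chain escapes at most once; reading bottom to top makes the \(0\)'s appear in increasing order). The inverse reads \(w_1,\dots,w_k\) and, for a letter \(i_a\), attaches a rook in the current new row at the top column of the \(a\)-th length-\(i\) extended linked rook of \(Q\); distinctness of columns and rows makes this a valid placement and the top-column ordering makes \(a\) unambiguous, so \(\phi\) is a bijection.

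The heart is the statistic \eqref{eq:12}. I would first isolate the cells \((i,j)\) with \(j\le n\). The key invariance is that for every column \(i^*\le n\) the rank of the topmost extended rook agrees in \(\ext(P)\) and \(\ext(Q)\) — if column \(i^*\) carries an escaping rook, that rook has the same rank as the extended-row cell of the truncated chain — and likewise the leftmost-rook data of every row \(j\le n\) and the ``rook lies above the cell'' eligibility of such a cell are unchanged. Thus the free cells of \(P\) with \(j\le n\) coincide with those of \(Q\), contributing \(\fc_\gamma(Q)\); it remains to count the free cells \((i^*,n+t)\) with \(i^*\le n\) and match their total with \(f_\mu(\set(w))+f(w)\).

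For this count I would fix a new row \(n+t\) and note that \((i^*,n+t)\) can be free only if the topmost extended rook in column \(i^*\) lies strictly above row \(n+t\), which happens precisely when \(i^*\) is the top column of a \emph{non-escaping} chain (an unextended chain of \(Q\)) or carries an escaping rook bound for a \emph{later} new row \(n+t'\), \(t'>t\). Splitting the free cells of row \(n+t\) by these two possibilities and summing over \(t\), the first family should reproduce \(f_\mu(\set(w))\) and the second \(f(w)\): writing \(w_t=i_a\) (or \(0_a\)), an unextended length-\(\ell\) chain with top column \(i^*\) contributes iff (\(i^*<c\) and \(\ell\ge i\)) or (\(i^*>c\) and \(\ell>i\)), with \(c\) the escaping column, and under the top-column ordering this is exactly \(w_t<\ell_b\) for the matching \(\ell_b\in M_\mu\setminus\set(w)\); similarly a later escaping row \(t'\) contributes iff \(w_t<w_{t'}\), and the increasing arrangement of the \(0\)'s kills the spurious empty–empty interactions. \textbf{The main obstacle} is precisely this reorganization: one must make the weak/strict inequalities in the free-cell condition align with the two-level order \(i_1>i_2>\cdots\), \(i_a<j_b\ (i<j)\) on \(M_\mu\cup\{0_1,0_2,\dots\}\), and it is the eligibility condition (the topmost rook must lie above the cell) that routes part of the count into the cross-row statistic \(f(w)\) rather than into \(f_\mu\). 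Carrying this out needs the case analysis suggested by \Cref{fig:image3} together with a check of all boundary cases (\(i^*=c\), escaping rook versus extended-row cell, \(c\) on the diagonal, and \(j'\) in the extended row).
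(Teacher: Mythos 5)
Your proposal is correct and follows essentially the same route as the paper: truncate \(P\) to \(Q\), encode the new rows bottom-to-top as the word \(w\) using the same top-column ordering of the chains of \(Q\), invert by re-attaching escaping rooks, and split the free cells into those in rows \(\le n\), those paired with a non-escaping chain, and those paired with a later escaping rook — exactly the paper's decomposition \(\fc_{\tilde{\gamma}} = \fc^1_{\tilde{\gamma}}+\fc^2_{\tilde{\gamma}}+\fc^3_{\tilde{\gamma}}\) matching \(\fc_{\gamma}(Q)\), \(f_\mu(\set(w))\), and \(f(w)\). Your closing case analysis (the alignment of weak/strict inequalities with the order on \(M_\mu\cup\{0_1,0_2,\dots\}\), the eligibility requirement that the topmost extended rook lie above the cell, and the increasing \(0\)'s suppressing empty–empty pairs) is accurate, and in fact supplies the details that the paper compresses into ``follow directly from the definitions.''
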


\begin{proof}
  Consider \( P\in \LRP(\tilde{\gamma},\nu) \). Let \( Q \) be the
  restriction of \( P \) to the rectangle \( [0,n]\times[0,n] \), and
  let \( \mu \) be the type of \( Q \). For each \( i\ge1 \), there
  are \( m_i(\mu) \) extended linked rooks of size \( i \) in
  \( \ext(Q) \), say \( R_{i,1},R_{i,2}, \dots,R_{i,m_i(\mu)}\), which
  are arranged so that, if the last extended rook of \( R_{i,t} \) is
  in column \( c_{i,t} \), then
  \( c_{i,1}<c_{i,2} < \cdots < c_{i,m_i(\mu)} \). For \( i\ge1 \),
  let \( \tilde{R}_{i,t} \) be the extended linked rook of \( P \)
  containing \( R_{i,t} \) except for the last extended rook. Note
  that there are \( \nu'_1-\mu'_1 \) extended linked rooks of size
  \( 1 \) in \( \ext(P) \) that lie in the last \( k \) columns. Let
  \( \tilde{R}_{0,1},\tilde{R}_{0,2},\dots,\tilde{R}_{0,\nu'_1-\mu'_1}
  \) be those extended linked rooks, and suppose that they lie in
  columns \( c_{0,1},\dots,c_{0,\nu'_1-\mu'_1} \), respectively, where
  \( n+1\le c_{0,1} < \cdots <c_{0,\nu'_1-\mu'_1}\le n+k \). We define
  \( \phi(P) = (Q,w) \), where \( w=w_1 \cdots w_k \) is the word
  such that \( w_i=j_t \) if row \( n+i \) contains an extended rook of
  \( \tilde{R}_{j,t} \) for some \( j\ge0 \) and \( t\ge1 \). For
an  example, see Figure~\ref{fig:image6}. Note that \( w \) is
  well-defined because each row may contain extended rooks from at
  most one extended linked rook.

\begin{figure}
  \centering
  \begin{tikzpicture}[scale=.45]
  \fill[color=pink!25!white] (0,1)--(1,1)--(1,2)--(2,2)--(2,4)--(3,4)--(3,6)--(5,6)--(5,7)--(7,7)--(7,12)--(0,12)--(0,1)--cycle;
\draw[dotted] (0,0)--(0,12);
\foreach \i in {1,...,12}{
  \draw[dotted] (0,\i)--(\i,\i);}
\foreach \i in {1,...,12}
\draw[dotted] (\i,12)--(\i,\i);
\draw[dotted] (0,0)--(12,12);
\draw[dotted, thick] (0,7)--(7,7);
\draw[very thick] (0,0)--(0,1)--(1,1)--(1,2)--(2,2)--(2,4)--(3,4)--(3,6)--(5,6)--(5,7)--(7,7)--(7,12)--(12,12);
\draw[thick] (.5, .5)--(.5, 12.5);
\filldraw (.5, .5) circle (5pt)
(.5,12.5) circle (5pt);
\draw[thick, color=cyan] (1.5, 1.5)--(1.5, 8.5)--(8.5, 8.5)--(8.5, 12.5);
\filldraw[color=cyan] (1.5, 1.5) circle (5pt)
(1.5, 8.5) circle (5pt)
(8.5, 8.5) circle (5pt)
(8.5, 12.5) circle (5pt);
\draw[thick, color=red] (2.5, 2.5)--(2.5, 4.5)--(4.5, 4.5)--(4.5, 12.5);
\filldraw[color=red] (2.5, 2.5) circle (5pt)
(2.5, 4.5) circle (5pt)
(4.5, 4.5) circle (5pt)
(4.5, 12.5) circle (5pt);
\draw[thick, color=teal] (3.5, 3.5)--(3.5, 6.5)--(6.5, 6.5)--(6.5, 9.5)--(9.5, 9.5)--(9.5, 12.5);
\filldraw[color=teal] (3.5, 3.5) circle (5pt)
(3.5, 6.5) circle (5pt)
(6.5, 6.5) circle (5pt)
(6.5, 9.5) circle (5pt)
(9.5, 9.5) circle (5pt)
(9.5, 12.5) circle (5pt); 
\draw[thick, color=orange] (5.5, 5.5)--(5.5, 10.5)--(10.5, 10.5)--(10.5, 12.5);
\filldraw[color=orange] (5.5, 5.5) circle (5pt)
(5.5, 10.5) circle (5pt)
(10.5, 10.5) circle (5pt)
(10.5, 12.5) circle (5pt);
\draw[thick, color=violet] (7.5, 7.5)--(7.5, 12.5);
\filldraw[color=violet] (7.5, 7.5) circle (5pt)
(7.5,12.5) circle (5pt);
\draw[thick, color=frenchblue] (11.5, 11.5)--(11.5, 12.5);
\filldraw[color=frenchblue] (11.5, 11.5) circle (5pt)
(11.5,12.5) circle (5pt);
\node [] at (.5, 13.1) {\small\( 1 \)};
\node [] at (1.5, 9.1) {\small\( 1 \)};
\node [] at (7.5, 13.1) {\small\( 1 \)};
\node [] at (9.5, 13.1) {\small\( 3 \)};
\node [] at (8.5, 13.1) {\small\( 2 \)};
\node [] at (2.5, 5.1) {\small\( 1 \)};
\node [] at (3, 6.5) {\small\( 1 \)};
\node [] at (4.5, 13.1) {\small\( 2 \)};
\node [] at (5.5, 11.1) {\small\( 1 \)};
\node [] at (10.5, 13.1) {\small\( 2 \)};
\node [] at (6.5, 10.1) {\small\( 2 \)};
\node [] at (11.5, 13.1) {\small\( 1 \)};
\node [] at (.8, -.3) {\small \( \tilde{R}_{1,1} \)};
\node [] at (1.8, .7) {\small \( \tilde{R}_{1,2} \)};
\node [] at (2.8, 1.7) {\small \( \tilde{R}_{2,1} \)};
\node [] at (3.8, 2.7) {\small \( \tilde{R}_{2,2} \)};
\node [] at (5.8, 4.7) {\small \( \tilde{R}_{1,3} \)};
\node [] at (7.8, 6.7) {\small \( \tilde{R}_{0,1} \)};
\node [] at (11.8, 10.7) {\small \( \tilde{R}_{0,2} \)};
\end{tikzpicture}\quad
\begin{tikzpicture}[scale=.45]
  \fill[color=pink!25!white] (0,1)--(1,1)--(1,2)--(2,2)--(2,4)--(3,4)--(3,6)--(5,6)--(5,7)--(0,7)--(0,1)--cycle;
\draw[dotted] (0,0)--(0,12);
\foreach \i in {1,...,12}{
  \draw[dotted] (0,\i)--(\i,\i);}
\foreach \i in {1,...,12}
\draw[dotted] (\i,12)--(\i,\i);
\draw[dotted] (0,0)--(12,12);
\draw[dotted, thick] (0,7)--(7,7);
\draw[very thick] (0,0)--(0,1)--(1,1)--(1,2)--(2,2)--(2,4)--(3,4)--(3,6)--(5,6)--(5,7)--(7,7)--(7,12)--(12,12);
\draw[thick] (.5, .5)--(.5, 7.5);
\filldraw (.5, .5) circle (5pt)
(.5, 7.5) circle (5pt);
\draw[thick, color=cyan] (1.5, 1.5)--(1.5, 7.5);
\filldraw[color=cyan] (1.5, 1.5) circle (5pt)
(1.5, 7.5) circle (5pt);
\draw[thick, color=red] (2.5, 2.5)--(2.5, 4.5)--(4.5, 4.5)--(4.5, 7.5);
\filldraw[color=red] (2.5, 2.5) circle (5pt)
(2.5, 4.5) circle (5pt)
(4.5, 4.5) circle (5pt)
(4.5, 7.5) circle (5pt);
\draw[thick, color=teal] (3.5, 3.5)--(3.5, 6.5)--(6.5, 6.5)--(6.5, 7.5);
\filldraw[color=teal] (3.5, 3.5) circle (5pt)
(3.5, 6.5) circle (5pt)
(6.5, 6.5) circle (5pt)
(6.5, 7.5) circle (5pt);
\draw[thick, color=orange] (5.5, 5.5)--(5.5, 7.5);
\filldraw[color=orange] (5.5, 5.5) circle (5pt)
(5.5, 7.5) circle (5pt);
\node [] at (.5, 8.1) {\small\( 1 \)};
\node [] at (1.5, 8.1) {\small\( 1 \)};
\node [] at (2.5, 5.1) {\small\( 1 \)};
\node [] at (3, 6.5) {\small\( 1 \)};
\node [] at (4.5, 8.1) {\small\( 2 \)};
\node [] at (5.5, 8.1) {\small\( 1 \)};
\node [] at (6.5, 8.1) {\small\( 2 \)};
\node [] at (.8, -.3) {\small \( R_{1,1} \)};
\node [] at (1.8, .7) {\small \( R_{1,2} \)};
\node [] at (2.8, 1.7) {\small \( R_{2,1} \)};
\node [] at (3.8, 2.7) {\small \( R_{2,2} \)};
\node [] at (5.8, 4.7) {\small \( R_{1,3} \)};
\end{tikzpicture}
  \caption{The left diagram shows \( \ext(P) \) for a linked rook
    placement \( P \). Then \( \phi(P) = (Q,w) \), where \( \ext(Q) \)
    is shown on the right and \( w = 0_1 1_2 2_21_30_2 \).}
  \label{fig:image6}
\end{figure}
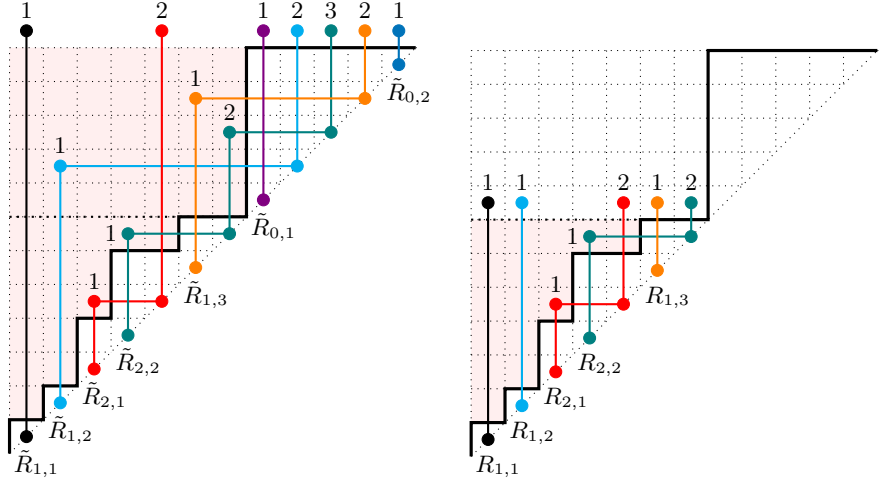

We first show that
\( \phi(P)=(Q,w)\in \LRP(\gamma,\mu) \times W(\nu,\mu) \). By
construction, we have \( Q\in \LRP(\gamma,\mu) \). To prove that
\( w\in W(\nu,\mu) \), we need to verify the three conditions in
\Cref{def:4}. The first and third conditions follow immediately from
the construction of \( w \). For the second condition, suppose that
\( w_i=j_t \) for some \( j\ge0 \) and \( t\ge1 \). This means that
\( \tilde{R}_{j,t} \) has an extended rook at \( (c,n+i) \) for some
\( 1\le c\le n+k \). If \( j=0 \), then \( n+1\le c\le n+k \), and the
size of \( \tilde{R}_{0,t} \) is \( 1 \). If \( j=1 \), then
\( 1\le c\le n \), and the size of \( \tilde{R}_{j,t} \) is \( j+1 \),
while the size of \( R_{j,t} \) is \( j \). This implies that
\( |\{j_a\in \set(w): a\ge1\}| = \nu'_{j+1}-\mu'_{j+1} \) for all
\( j\ge0 \). Hence, the second condition holds.

Observe that, using the above notation, \( P \) can be reconstructed
from \( Q \) by adding a rook at the cell \( (c_{i,t},n+i) \) for each
\( i\in [k] \) such that \( w_i=j_t \) for some \( j\ge1 \) and
\( t\ge1 \). This establishes that \( \phi \) is a bijection. Hence, it
remains to prove \eqref{eq:12}. To do this, we define the following:
\begin{enumerate}
\item \( \fc_{\tilde{\gamma}}^1(P) \) is the number of free cells of
\( P \) in rows \( 1,2,\dots,n \),
\item \( \fc_{\tilde{\gamma}}^2(P) \) is the number of free cells of \( P \) 
  in rows \( n+1,\dots,n+k \) whose fc-pairs have a cell in row \( n+k+1 \),
\item \( \fc_{\tilde{\gamma}}^3(P) \) is the number of free cells of \( P \) 
  in rows \( n+1,\dots,n+k \) whose fc-pairs have no cell in row \( n+k+1 \).
\end{enumerate}
Then, \( \fc_{\tilde{\gamma}}(P)= \fc_{\tilde{\gamma}}^1(P)+\fc_{\tilde{\gamma}}^2(P)+\fc_{\tilde{\gamma}}^3(P) \).
Thus, to prove \eqref{eq:12}, it suffices to show that
\begin{align}
\label{eq:24}  \fc_{\tilde{\gamma}}^1(P)  &= \fc_{\gamma}(Q), \\
\label{eq:25}  \fc_{\tilde{\gamma}}^2(P)  &= f_\mu(\set(w)), \\
\label{eq:26}  \fc_{\tilde{\gamma}}^3(P)  &= f(w).
\end{align}

Observe that every extended rook of \( R_{i,j} \) remains in the same
position in \( \tilde{R}_{i,j} \), except for its last extended rook,
which only moves upward in the same column. Hence, the free cells of
\( Q \) are exactly those of \( P \) in rows \( 1,2,\dots,n \), which
shows \eqref{eq:24}. The identities \eqref{eq:25} and \eqref{eq:26}
follow directly from the definitions of \( f_\mu(\set(w)) \),
\( f(w) \), and fc-pairs. This completes the proof.
\end{proof}

By \Cref{lem:3,lem:4}, we obtain \Cref{pro:key}.

\section{Concluding remarks}\label{sec:remarks}

Here, we propose some open problems that require further investigation.

\subsection{Direct bijection}
The Schur expansion of the chromatic quasisymmetric function is
well-known in terms of \(P\)-tableaux \cite{Gasha}: for \( \gamma\in\D_n \),
\begin{equation}\label{eqn:remark1}
X_\gamma(\vx;q)=\sum_{\lambda\vdash n} \left( \sum_{\substack{T: P \text{-tableau}\\ {\rm sh}(T)=\lambda}} q^{\inv(T)}\right) s_{\lambda}(\vx).
\end{equation}
It is also well known \cite[(6.5), p.242]{Mac} that the Schur
function has the following Hall--Littlewood expansion:
\begin{equation}\label{eqn:remark2}
s_{\lambda}(\vx) = \sum_{\mu\vdash n}K_{\lambda\mu}(q) P_{\mu}(\vx;q),
\end{equation}
where 
\[K_{\lambda\mu}(q)=\sum_{T\in\SSYT (\lambda,\mu)}q^{{\rm ch}(T)}.\]
Combining \eqref{eqn:remark1} and \eqref{eqn:remark2} gives another Hall--Littlewood expansion of chromatic quasisymmetric functions computed over the 
set of pairs of a \(P\)-tableau and a semistandard Young tableau. It would be interesting to see if there exists any correspondence between the set of pairs of \( P \)-tableaux and semistandard Young tableaux and 
the set of linked rook placements.

\subsection{Refinement of hit polynomials}

In \cite[Proposition 2.6]{HOY}, the principal specialization of the chromatic symmetric function is given as 
\[
  X_\gamma(1,q,\dots, q^{\alpha-1};q)= q^{\area(\gamma)}\prod_{i=1}^n[\alpha-a_i(\gamma)]_q=q^{\area(\gamma)}\sum_{k=0}^{n}R_{n-k}(B_\gamma;q)[\alpha]_q^{\underline{k}},
\]
where \( B_\gamma \) is the Ferrers board with \( i \)-th column
height \( c_i=i-1-a_i(\gamma) \), \( R_k(B_\gamma;q) \) is the
\( k \)-th \emph{\( q \)-rook polynomial} \cite{GR86} on the board
\( B_\gamma \), and \( [\alpha]_q^{\underline{k}} \) is the
\( k \)-falling factorial
\( [\alpha]_q[\alpha-1]_q\cdots [\alpha-k+1]_q \). If we compare the
result of the principal specialization of the Hall--Littlewood
expansion of \( X_\gamma (\vx;q) \) in \eqref{eqn:HLexpthm}, by noting
the fact \cite[Example 1, p.~213]{Mac} that
\[
  \prod_{i\ge 1}[m_i (\mu)]_q ! P_\mu(1,q,\dots, q^{\alpha -1};q)=\frac{q^{n(\mu)}}{(1-q)^{\ell(\mu)}}
  \frac{\prod_{i=1}^{\alpha}(1-q^i)}{\prod_{i=1}^{\alpha -\ell(\mu)}(1-q^i)}=q^{n(\mu)}[\alpha]_q ^{\underline{\ell(\mu)}},
\]
we get
\[
  \sum_{k=0}^n R_{n-k}(B_\gamma ;q)[\alpha]_q ^{\underline{k}}
  =\sum_{\mu\vdash n}r_{\gamma, \mu}(q)[\alpha]_q ^{\underline{\ell(\mu)}}
  = \sum_{k=0}^n \left( \sum_{\substack{\mu\vdash n\\ \ell(\mu) = k}}r_{\gamma, \mu}(q) \right) [\alpha]_q ^{\underline{k}}.
\]
This shows that
\begin{equation}\label{eq:27}
   R_{n-k}(B_\gamma ;q) = \sum_{\substack{\mu\vdash n\\ \ell(\mu) = k}}r_{\gamma, \mu}(q).
\end{equation}
Thus, we can consider \( r_{\gamma, \mu} (q)\) as a refinement of the original rook polynomial.
On the other hand, we also have the following identity \cite{BHR}:
\[
  \prod_{i=1}^n[\alpha-a_i(\gamma)]_q=  \sum_{k=0}^{n}R_{n-k}(B_\gamma;q)[\alpha]_q^{\underline{k}}
  = \sum_{k=0}^n h_k (B_\gamma ;q)\qbinom{\alpha +k}{n} ,
\]
where \( h_k (B_\gamma ;q) \) is the \( k \)-th \emph{\( q \)-hit polynomial} \cite{Dwo98, Hag98a}. Hence, a natural question would be whether one can find
a refinement of the \( q \)-hit polynomials in a similar vein. 

\section*{Acknowledgements}

The second author is grateful to Jeong Hyun Sung for helpful
conversations. J.~S.~Kim was supported by the National Research
Foundation of Korea (NRF) grant funded by the Korea government
RS-2025-00557835. S.~J.~Lee was supported by the National Research
Foundation of Korea (NRF) grant funded by the Korean government (MSIT)
(No.0450-20240021). M.~Yoo was supported by the National Research
Foundation of Korea (NRF) grant funded by the Korea government
RS-2024-00344076.

\bibliographystyle{alpha}  
\newcommand{\etalchar}[1]{$^{#1}$}

\end{document}